\def\scrH{\mathscr{H}}
\def\scrS{\mathscr{S}}
\def\scrV{\mathscr{V}}
\def\calA{\mathcal{A}}
\def\calB{\mathcal{B}}
\def\calD{\mathcal{D}}
\def\calI{\mathcal{I}}
\def\calK{\mathcal{K}}
\def\calR{\mathcal{R}}
\def\bfS{\mathbf{S}}
\def\bbC{\mathbb{C}}
\def\bbN{\mathbb{N}}
\def\bbR{\mathbb{R}}
\newcommand{\mvec}{\mathop{\text{vec}}}
\newcommand{\rrank}{\mathop{\text{rank}}}
\newcommand{\colspan}{\mathop{\text{colspan}}}
\newcommand{\minim}{\mathop{\text{\;minimize\;}}}
\newcommand{\bmx}{\begin{bmatrix}}
\newcommand{\emx}{\end{bmatrix}}
\newcommand{\bsm}{\left[\begin{smallmatrix}}
\newcommand{\esm}{\end{smallmatrix}\right]}
\newcommand{\ie}{\emph{i.e.}}
\newcommand{\eg}{\emph{e.g.}}
\def\bbZp{\mathbb{N}}
\newcommand{\set}[1]{\mathcal{#1}} % subspaces
\newcommand{\arr}[1]{\mathbf{#1}}
\newcommand{\matr}[1]{\boldsymbol{#1}}
\newcommand{\vect}[1]{\boldsymbol{#1}}
\newcommand{\T}{{\sf T}}      % transposition
\renewcommand{\H}{{\sf H}}      % Hermitian transposition
\newcommand{\rank}[1]{\mathop{\operator@font rank}\{#1\}}   % tensor or matrix rank
\newcommand{\krank}[1]{\mathop{\operator@font krank}\{#1\}}
\newcommand{\trace}[1]{\mathop{\operator@font trace}\{#1\}}
\newcommand{\Diag}[1]{\mathop{\operator@font Diag}\{#1\}}    % a diagonal matrix
\newcommand{\diag}[1]{\mathop{\operator@font diag}\{#1\}}    % a vector
\newcommand{\Span}[1]{\mathop{\operator@font Span}\{#1\}}    % a space
\renewcommand{\vec}{\mathop{\operator@font vec}} % vectorisation
\newcommand{\eqdef}{\stackrel{\mathrm{def}}{=}}
\def\trgset#1#2{\blacktriangle^{(#1,#2)}}
\def\degset#1#2{\triangle^{(#1,#2)}}
\def\nvar{m}
\def\lker#1{\mathop{\mathrm{l.ker.}}{(#1)}}
\def\iarr#1{\arr{#1}}
\def\struct#1{\scrS(#1)}
\def\hmatr#1#2{\scrH_{#1} {(#2)}}
\def\vmatr#1#2{\scrV_{#1} {(#2)}}
\def\hrank#1{\mathop{\mathrm{hrank}}{(#1)}}
\def\inter#1{\mathop{\mathrm{int}}{(#1)}}
\newenvironment{smallarray}[1]
 {\null\,\vcenter\bgroup\scriptsize
  \arraycolsep=.13885em
  \hbox\bgroup$\array{@{}#1@{}}}
 {\endarray$\egroup\egroup\,\null}
 \newenvironment{mediumarray}[1]
 {\null\,\vcenter\bgroup\small
  \arraycolsep=.13885em
  \hbox\bgroup$\array{@{}#1@{}}}
 {\endarray$\egroup\egroup\,\null}
\newenvironment{enumeratex}{\begin{inparaenum}[\begin{bfseries}(i)\end{bfseries}]}{\end{inparaenum}}
\def\smallhline{\noalign{\vskip1pt\hrule\vskip1pt}}
\newtheorem{theorem}{Theorem}
\newtheorem{problem}{Problem} 
\newtheorem{corollary}[theorem]{Corollary} 
\newtheorem{lemma}[theorem]{Lemma} 
\newtheorem{proposition}[theorem]{Proposition} 
\newtheorem{definition}[theorem]{Definition} 
\newtheorem{note}[theorem]{Remark} 
\newtheorem{remark}[theorem]{Remark} 
\newtheorem{example}{Example}
\def\proofapp{\begin{proof}See Appendix \ref{proofcomplex-sec} for a proof.\end{proof}}
\begin{document}

\author{Konstantin Usevich and Pierre Comon \\
{\small CNRS, GIPSA-Lab, F-38000 Grenoble, France.} \\ {\small E-mail: \tt{firstname.lastname@gipsa-lab.fr}}}

\title{Quasi-Hankel low-rank matrix completion:\\ a convex relaxation}
\maketitle

% !TEX root = mat_completion.tex
\begin{abstract}
The completion of matrices with missing values under the rank constraint is a non-convex optimization problem. A popular convex relaxation is based on minimization of the nuclear norm  (sum of singular values) of the matrix. For this relaxation, an important question is whether the two optimization problems lead to the same solution. This question was addressed in the literature mostly in the case of random positions of missing elements and random known elements. In this contribution, we analyze the case of structured matrices with fixed pattern of missing values, in particular, the case of Hankel and quasi-Hankel matrix completion, which appears as a subproblem in the computation of symmetric tensor canonical polyadic decomposition. We extend existing results on completion of rank-one real Hankel matrices to completion of rank-r complex Hankel and quasi-Hankel matrices.
\end{abstract}
% !TEX root = mat_completion.tex
\section{Introduction}
The problem of completing matrices with missing entries can be traced back to the works of Prony in 1795, and has been addressed since in various fields including: 
compressed sensing \cite{Chen.Chi14IToIT-Robust,CandR09:fcm,Gros11:ieeeit},
system identification and control \cite{DeSc00:jcam,FazePST13:simax,LiuV09:simax}, 
graph theory \cite{Tros00:coa}, 
collaborative filtering \cite{CandP10:pieee}, 
information theory \cite{GrayD05}, 
chemometrics \cite{Bro97:cils}, 
seismics \cite{Hogb03:laa,KreiSS13:geophysics},   
estimation problems and sensor networks \cite{CandP10:pieee}, to cite a few.
It also appears as a subproblem in the computation of symmetric tensor Canonical Polyadic (CP) decompositions \cite{Brachat.etal10LAaiA-Symmetric}.
%, which is related to polynomial interpolation \cite{Mira99:nams}.

\subsection{Structured matrix completion}
We are interested in structured matrices of the form
\begin{equation}\label{eq:affine_structure}
\struct{\vect{p}} = \matr{S}_0 + \sum\limits_{k=1}^{N} p_k \matr{S}_k,
\end{equation}
where $\matr{S}_k$, $k \in \{0,\ldots, N\}$ are known linearly independent $n\times n$ matrices, and $\vect{p}=[p_1,\dots, p_N]^{\T}$.
Without loss of  generality, we assume that the matrices $\matr{S}_k \in \bbC^{n\times n}$ are symmetric (a nonsymmetric matrix completion problem can be always symmetrized).
Thus, $\mathscr{S}$ is an injective map $\mathbb{C}^N \to \mathbb{C}^{n\times n}$, called affine matrix structure.
The Structured Low-Rank Matrix Completion (SLRMC) for complex affine matrix structures is then stated  formally as
\begin{equation}\label{eq:rankmin}
\widetilde{\vect{p}} =  \arg \min_{\vect{p} \in \bbC^{N}} \rank{\struct{\vect{p}}}.
\end{equation}
In the literature on matrix completion, the problem~\eqref{eq:rankmin} is often formulated in dual form, \textit{i.e.}, as rank minimization subject to linear constraints, and is called affine rank minimization \cite{Recht.etal10SR-Guaranteed}.
In this paper we prefer the formulation \eqref{eq:rankmin}, which seems to be more suitable for completion of structured (\textit{e.g.}, Hankel-like) matrices.

There are special cases when the solution to SLRMC can be found in polynomial time: partially known block-triangular matrices \cite{Woerdeman89LAaiA-Minimal}, Hankel/Toeplitz matrices \cite{Iohvidov82-Toeplitz,Heinig.Rost84-Algebraic}, block-Hankel matrices \cite{Feldmann.Heinig99IEaOT-Parametrization} and some cases of quasi-Hankel matrices \cite{Laurent.Mourrain09AdM-generalized}.
In the general case, the low-rank matrix completion is NP-hard (see, \eg, \cite{Fazel02PhD-Matrix}).

SLRMC can be also considered as an extreme case of Structured Low-rank matrix Approximation (SLRA) problem with missing data \cite{Markovsky.Usevich13SJMAA-Structured}.
The latter problem is more relevant in practice, where we may have noise in addition to missing data.

\subsection{A convex relaxation}
A  popular approach in machine learning \cite{Fazel02PhD-Matrix,Recht.etal10SR-Guaranteed} is to build a  relaxation of LRMC, by replacing the rank with the nuclear norm (\textit{i.e.} the sum of singular values): 
\begin{equation}\label{eq:nnmin}
\widehat{\vect{p}} = \arg \min_{\vect{p} \in \bbC^{N}} \| \scrS(\vect{p}) \|_*.
\end{equation}
Now \eqref{eq:nnmin} is a convex optimization problem, and a variety of convex optimization methods can be used to find its global minimum.
A central question  is when do the solutions of \eqref{eq:nnmin} and \eqref{eq:rankmin} coincide (\ie, the conditions on low-rank matrix recovery with the nuclear norm).

A special case of SLRMC is when each matrix $\matr{S}_k$ contains only one nonzero element.
This is the most common unstructured matrix completion: $\matr{S}_0$ is the known part of the matrix and the positions of nonzero elements in $\matr{S}_k$ indicate the values to be completed.
There is a vast literature on the subject on low-rank matrix recovery, where mainly 
the case of unstructured matrix completion is treated.
Most of the results are proved in a random setting  \cite{CandR09:fcm,Recht.etal10SR-Guaranteed,Gros11:ieeeit}: the positions of missing entries (non-zero elements of $\matr{S}_k$, $k \in \{1,\ldots, N\}$) are assumed to be drawn randomly; often the known entries  are also assumed to be random.
There are also results on the general affine rank minimization (which is equivalent to SLRMC), but the linear constraints are also assumed to be drawn randomly according to a certain probability distribution.

Up to the authors' knowledge, there exists only one result \cite{Dai.Pelckmans15A-nuclear}, which 
treats the case of fixed structure, in a very simple case.
\begin{theorem}[{\cite[Thm. 1]{Dai.Pelckmans15A-nuclear} }]\label{thm:rank_one}
Let $\mathscr{S}$ be  a Hankel matrix
\[
\mathscr{S}(\vect{p}) = 
\begin{bmatrix}
1 & \lambda &  \cdots & \lambda^{n} \\
\lambda & \lambda^2 &  \iddots & p_1 \\
\vdots &    \iddots &  \iddots & \vdots \\
\lambda^{n} & p_1 &  \cdots & p_{n-1} \\
\end{bmatrix},
\]
where $\lambda \in (-1;1)$. Then the solution of \eqref{eq:nnmin}, with  $\vect{p} \in \mathbb{R}^{n-1}$, is unique and coincides with the minimal rank (rank-$1$) completion, which is given by
$p_k  = \lambda^{n+k}$.
\end{theorem}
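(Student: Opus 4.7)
The approach is convex duality: I will try to exhibit a dual certificate proving optimality of the candidate rank-$1$ solution for \eqref{eq:nnmin}. Set $\vect{v} = (1,\lambda,\ldots,\lambda^{n})^{\T}$; the choice $p_k^\ast = \lambda^{n+k}$ yields $\mathscr{S}(\vect{p}^\ast) = \vect{v}\vect{v}^{\T}$, a real symmetric positive semidefinite rank-$1$ matrix with nuclear norm $\|\vect{v}\|_2^2 = \sum_{k=0}^{n}\lambda^{2k}$. The goal is to show every other feasible $\mathscr{S}(\vect{p})$ has strictly larger nuclear norm.

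By polarity of the nuclear and spectral norms, $\|\mathscr{S}(\vect{p})\|_\ast \geq \langle \vect{Y}, \mathscr{S}(\vect{p})\rangle$ for every $\vect{Y}$ with $\|\vect{Y}\|_2\leq 1$. If in addition $\vect{Y}$ satisfies the \emph{annihilation} conditions $\langle \vect{Y},\matr{S}_k\rangle = 0$ for $k=1,\ldots,n-1$, the right-hand side becomes constant on the feasible affine set and equals $\langle \vect{Y}, \vect{v}\vect{v}^{\T}\rangle = \vect{v}^{\T}\vect{Y}\vect{v}$. Imposing also $\vect{Y}\vect{v} = \vect{v}$ pins this value at $\|\vect{v}\|_2^2 = \|\mathscr{S}(\vect{p}^\ast)\|_\ast$, certifying optimality.

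The core of the proof is the explicit construction of such a $\vect{Y}$. I would write
\[
\vect{Y} \;=\; \frac{\vect{v}\vect{v}^{\T}}{\|\vect{v}\|_2^2} + \vect{W},\qquad \vect{W}=\vect{W}^{\T},\quad \vect{W}\vect{v}=\vect{0},\quad \|\vect{W}\|_2 \leq 1.
\]
The annihilation conditions then dictate, for each missing anti-diagonal indexed by $i+j-2=n+k$, the prescribed sum
\[
\sum_{i+j-2=n+k} W_{ij} \;=\; -\,\frac{(n+1-k)\,\lambda^{n+k}}{\|\vect{v}\|_2^2},\qquad k=1,\ldots,n-1.
\]
My plan is to take $\vect{W} = P\vect{H}P$, where $P = I - \vect{v}\vect{v}^{\T}/\|\vect{v}\|_2^2$ is the orthogonal projector onto $\vect{v}^\perp$ and $\vect{H}$ is an auxiliary symmetric Hankel matrix supported on the missing anti-diagonals, whose anti-diagonal values are solved for so that conjugation by $P$ yields the required sums. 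The projector $P$ automatically enforces $\vect{W}\vect{v}=\vect{0}$, and the hypothesis $|\lambda|<1$ makes the target sums decay exponentially in $n+k$, leaving ample spectral-norm budget to control $\|\vect{H}\|_2$ and hence $\|\vect{W}\|_2$.

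Uniqueness follows by arranging the construction so that the certificate is strict, i.e.\ $\|\vect{W}\|_2 < 1$ on $\vect{v}^\perp$; then the subgradient inequality $\|\mathscr{S}(\vect{p})\|_\ast \geq \|\mathscr{S}(\vect{p}^\ast)\|_\ast + \langle \vect{Y}, \mathscr{S}(\vect{p}) - \mathscr{S}(\vect{p}^\ast)\rangle$ is strict along every nonzero feasible perturbation $\sum_k(p_k-p_k^\ast)\matr{S}_k$. The \textbf{main obstacle} is the third step: producing a $\vect{W}$ that is simultaneously symmetric, annihilates $\vect{v}$, realises the prescribed anti-diagonal sums, and lies inside the open spectral-norm unit ball. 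The assumption $\lambda\in(-1,1)$ enters precisely through the geometric decay that makes this budget feasible.
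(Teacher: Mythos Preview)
The paper does not prove Theorem~\ref{thm:rank_one} at all: it is quoted verbatim from Dai--Pelckmans. The paper's own machinery (Lemmas~\ref{lem:qhankel_zero_root_projector} and~\ref{lem:zero_root_perturbation}, culminating in Theorem~\ref{thm:rank_r}) establishes only the weaker statement that \emph{some} $\rho(d,1)>0$ exists, by checking the optimality conditions at $\lambda=0$ and then invoking a continuity/perturbation argument. The sharp value $\rho=1$ is attributed to the cited reference, not reproved here; see the Note following Theorem~\ref{thm:rank_r}.

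Your outline is exactly the dual-certificate framework the paper formalises in Lemmas~\ref{lem:opt_first_order_nnmin}--\ref{lem:unique_minimizer_nnmin} (cf.\ Remark~\ref{rem:equivalent_conditions_slrmc_DP15}), and it is presumably also what Dai--Pelckmans do. However, as written it is only a plan, not a proof. The step you yourself flag as the ``main obstacle'' --- producing a symmetric $\matr{W}$ with $\matr{W}\vect{v}=\vect{0}$, the prescribed anti-diagonal sums, and $\|\matr{W}\|_2<1$ --- is precisely where all the work lies, and you have not carried it out. Saying the targets ``decay exponentially'' is suggestive but not a bound: the ansatz $\matr{W}=\matr{P}\matr{H}\matr{P}$ scrambles anti-diagonals, so you must actually invert the linear map from Hankel parameters of $\matr{H}$ to the anti-diagonal sums of $\matr{P}\matr{H}\matr{P}$ and control the norm of the resulting $\matr{H}$ uniformly over $\lambda\in(-1,1)$. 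Without that computation the argument does not close for the full range $|\lambda|<1$; at best it recovers what the paper's perturbation lemma already gives (validity for $|\lambda|$ small).

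There is also a gap in your uniqueness claim. Strictness $\|\matr{W}\|_2<1$ is not by itself sufficient: you must also verify that no nonzero feasible perturbation $\matr{\Delta}=\sum_k \Delta p_k \matr{S}_k$ satisfies $\matr{Q}\matr{\Delta}\matr{Q}=\matr{0}$ (equivalently, that $\mathscr{A}(\matr{P})$ has full row rank, Lemma~\ref{lem:unique_minimizer_nnmin}). Here this is easy --- $\matr{Q}\matr{\Delta}\matr{Q}=\matr{0}$ would force $\matr{\Delta}$ to be a multiple of $\vect{v}\vect{v}^{\T}$, which has nonzero upper-left entries --- but it needs to be said.
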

In this paper we extend the results of \cite{Dai.Pelckmans15A-nuclear} in two directions: (i) to  Hankel matrices with arbitrary rank, and (ii) to quasi-Hankel matrices, which are particularly interesting in the context of symmetric tensor CP decomposition \cite{Brachat.etal10LAaiA-Symmetric}.
As in \cite{Dai.Pelckmans15A-nuclear}, we consider cases when the solution to SLRMC is known, and establish conditions on $\mathscr{S}$ when the solution of \eqref{eq:rankmin} and \eqref{eq:nnmin} coincide.

Our results are more general (for example, applicable both in the real and complex cases), and do not depend on the results of \cite{Dai.Pelckmans15A-nuclear}.
However, we believe that our exposition may also help to understand the complicated logic of the proof in \cite{Dai.Pelckmans15A-nuclear}.

In Section~\ref{sec:background} we introduce the main notation, in particular quasi-Hankel matrices, which are generalizations of Hankel matrices.
In Section~\ref{seq:qh_completion} we formally state the considered quasi-Hankel matrix completion problem, and summarize some known results
(this summary is needed for stating the main results of the paper in Section~\ref{sec:results}). 
In Section~\ref{sec:nnmin_genprop} we recall optimality conditions for convex optimization problems, and specialize them to nuclear norm minimization for structured matrices.

Section~\ref{sec:lemmas} contains the main lemmas needed for proofs of the results of the paper. The lemmas imply that the optimality conditions are satisfied if the column space of completed matrices is not too far from a certain simply structured subspace.
Finally, in Section~\ref{sec:results}, we prove the main results of the paper, and illustrate them with numerical experiments in Section~\ref{sec:numerical}.

% !TEX root = mat_completion.tex
\section{Background and main notation}\label{sec:background}
\subsection{Sets of multi-indices}
We denote by $\mathbb{N}$ the set of nonnegative integers.
%In the remainder, we shall use multi-indices, which offer a more compact notation \cite{ComoM96:SP}.
First, for a multi-index $\vect{\alpha} = ({\alpha_1}, {\alpha_2}, \cdots, {\alpha_m}) \in \mathbb{N}^m$, the monomial  $x_1^{\alpha_1} x_2^{\alpha_2} \cdots x_m^{\alpha_m}$  will be denoted as
$\vect{x}^{\vect{\alpha}}$, 
and its total degree is $|\vect{\alpha}|\eqdef\sum_\ell \alpha_\ell$.

For ordering the multi-indices, we use in this paper an ordering denoted by $\prec$. It is defined recusively on the size of vectors $\vect\alpha$ and $\vect\beta$ as:
\[
\begin{split}
& \vect\alpha \prec \vect\beta \Leftrightarrow\\
& \Leftrightarrow \left\{\begin{array}{l}
|\vect\alpha| < |\vect\beta| \\
 \mbox{ or } \\
|\vect\alpha| = |\vect\beta| ~ \mbox{and} ~ (\alpha_2, \ldots, \alpha_{\nvar}) 
\prec  (\beta_2, \ldots, \beta_{\nvar})
\end{array}\right.
\end{split}
\]

Next, we shall denote by $\trgset{\nvar}{d}$ the set of multi-indices $\{\vect{\alpha}\in \mathbb{N}^{\nvar}\!:\, |\vect{\alpha}| \le d\}$.
By $\degset{\nvar}{d}$ we denote the set $\{\vect\alpha \in \bbZp^{\nvar}:\, |\vect\alpha| = d\}$. 
It is easy to see that 
\[
\trgset{\nvar}{d} = \degset{\nvar}{0} \cup\degset{\nvar}{1} \cup \cdots \cup \degset{\nvar}{d}.
\]
\begin{example}
Take $m=2$ and $d=3$. Then
\[
\begin{split}
\trgset{2}{3} =& \{{(0,0)}, {(1,0)}, {(0,1)}, {(2,0)}, {(1,1)}, {(0,2)},\\ 
               & \phantom{\{}{(3,0)}, {(2,1)}, {(1,2)}, {(0,3)}\},\\
\degset{2}{0} =& \{ {(0,0)} \}, ~ \degset{2}{1} = \{{(1,0)}, {(0,1)}\},\\
\degset{2}{2} =& \{ {(2,0)}, {(1,1)}, {(0,2)}\}, \\
\degset{2}{3} =& \{ {(3,0)}, {(2,1)}, {(1,2)}, {(0,3)} \}.
\end{split}
\]
The multi-indices  are ordered according to $\prec$ in the sets defined above. In fact, when the total degrees are the same, terms are sorted only by increasing degrees in the second variable.
\end{example}
 
For sets $\set{A}, \set{B} \subset \mathbb{N}^{\nvar}$, we define their Minkowski sum  as
$
\set{A} + \set{B} \eqdef \{ \vect{\alpha} + \vect{\beta} \,:\, \vect{\alpha}\in\!\set{A}, \vect{\beta}\in\!\set{B} \},
$
with a shorthand notation $2\set{A} \eqdef \set{A} + \set{A}$.
It is easy to see that $\degset{\nvar}{d_1} + \degset{\nvar}{d_2} = \degset{\nvar}{d_1+d_2}$ and $\trgset{\nvar}{d_1} + \trgset{\nvar}{d_2} = \trgset{\nvar}{d_1+d_2}$.

For $m=1$, we have that $\trgset{1}{d} = \{0,\ldots,d\}$ and 
$\{0,\ldots,d_1\}+ \{0,\ldots,d_2\} = \{0,\ldots,d_1+d_2\}$.
For $m = 2$, an example is shown in Fig.~\ref{fig:msum} (the multi-indices are depicted as black dots).
\begin{figure}[ht!]
\centering
\begin{tikzpicture}[scale=0.6]
\tikzstyle{dot} = [draw,circle,minimum size=1.25mm,fill,inner sep=0mm]
\begin{scope}[xshift = 0cm]
    \draw (0,1.5) node[anchor=east]  {\small$3$};
    \draw (0,2.05) node[anchor=east]  {\small$\alpha_2$};
    \draw (0.5,0) node[anchor=north]  {\small$1$};
    \draw (1,0) node[anchor=north ]  {\small$2$};
    \draw (0,0) node[anchor=north ]    {\small$0$};
    \draw (0,0.5) node[anchor= east]    {\small$1$};
    \draw (0,1) node[anchor= east]    {\small$2$};
    \draw (1.5,0) node[anchor=north ]  {\small$3$};
    \draw (2,0) node[anchor=north ]  {\small$\alpha_1$};
 
    \draw[color=black, help lines, line width=.1pt] (0,0)
      grid[xstep=0.5cm, ystep=0.5cm] (1.25,1.25);
 
    \draw(0,0)      node[dot]   (p00) {};
    \draw(0,0.5)      node[dot]   (p01) {};
    \draw(0,1)      node[dot]   (p01) {};
    \draw(0.5,0)      node[dot]   (p10) {};
    \draw(0.5,0.5)      node[dot]   (p11) {};
    \draw(1,0)      node[dot]   (p20) {};
    \draw(1.5,0)      node[dot]   (p01) {};
    \draw(1,0.5)      node[dot]   (p10) {};
    \draw(0.5,1)      node[dot]   (p11) {};
    \draw(0,1.5)      node[dot]   (p20) {};
 
    \draw [->] (0,0) -- (0,2);
    \draw [->] (0,0) -- (2,0);
    
    \draw(1.85,0.6) node {$+$};
    \draw(4.45,0.6) node {$=$};
\end{scope}   
  
\begin{scope}[xshift = 2.9cm]
    \draw (0,1.5) node[anchor=east]  {\small$3$};
    \draw (0,2.05) node[anchor=east]  {\small$\alpha_2$};
    \draw (0.5,0) node[anchor=north]  {\small$1$};
    \draw (1,0) node[anchor=north ]  {\small$2$};
    \draw (0,0) node[anchor=north ]    {\small$0$};
    \draw (0,0.5) node[anchor= east]    {\small$1$};
    \draw (0,1) node[anchor= east]    {\small$2$};
    \draw (1.5,0) node[anchor=north ]  {\small$3$};
    \draw (2,0) node[anchor=north ]  {\small$\alpha_1$};
 
    \draw[color=black, help lines, line width=.1pt] (0,0)
      grid[xstep=0.5cm, ystep=0.5cm] (1.25,1.25);
 
    \draw(0,0)      node[dot]   (p00) {};
    \draw(0,0.5)      node[dot]   (p01) {};
    \draw(0,1)      node[dot]   (p02) {};
    \draw(0.5,0)      node[dot]   (p10) {};
    \draw(0.5,0.5)      node[dot]   (p11) {};
    \draw(1,0)      node[dot]   (p20) {};
     \draw(1.5,0)      node[dot]   (p01) {};
    \draw(1,0.5)      node[dot]   (p10) {};
    \draw(0.5,1)      node[dot]   (p11) {};
    \draw(0,1.5)      node[dot]   (p20) {};

    \draw [->] (0,0) -- (0,2);
    \draw [->] (0,0) -- (2,0);
\end{scope}    

\begin{scope}[xshift = 5.6cm]
    \draw (0,3.5) node[anchor=east]  {\small$\alpha_2$};
    \draw (0.5,0) node[anchor=north]  {\small$1$};
    \draw (1,0) node[anchor=north ]  {\small$2$};
    \draw (1.5,0) node[anchor=north ]  {\small$3$};
    \draw (2,0) node[anchor=north ]  {\small$4$};
    \draw (2.5,0) node[anchor=north ]  {\small$5$};
    \draw (3,0) node[anchor=north ]  {\small$6$};
    \draw (0,0) node[anchor=north ]    {\small$0$};
    \draw (0,0.5) node[anchor= east]    {\small$1$};
    \draw (0,1) node[anchor= east]    {\small$2$};
    \draw (0,1.5) node[anchor= east]    {\small$3$};
    \draw (0,2) node[anchor= east]    {\small$4$};
    \draw (0,2.5) node[anchor= east]    {\small$5$};
    \draw (0,3) node[anchor= east]    {\small$6$};
    \draw (3.5,0) node[anchor=north ]  {\small$\alpha_1$};
 
    \draw[color=black, help lines, line width=.1pt] (0,0)
      grid[xstep=0.5cm, ystep=0.5cm] (3.25,3.25);
 
    \draw(0,0)      node[dot]   (p00) {};
    \draw(0,0.5)      node[dot]   (p01) {};
    \draw(0,1)      node[dot]   (p02) {};
    \draw(0,1.5)      node[dot]   (p03) {};
    \draw(0,2)      node[dot]   (p04) {};
    \draw(0,2.5)      node[dot]   (p04) {};
    \draw(0,3)      node[dot]   (p04) {};
    \draw(0.5,0)      node[dot]   (p10) {};
    \draw(0.5,0.5)      node[dot]   (p11) {};
    \draw(0.5,1)      node[dot]   (p12) {};
    \draw(0.5,1.5)      node[dot]   (p13) {};
    \draw(0.5,2)      node[dot]   (p13) {};
    \draw(0.5,2.5)      node[dot]   (p13) {};
    \draw(1,0)      node[dot]   (p20) {};
    \draw(1,0.5)      node[dot]   (p21) {};
    \draw(1,1)      node[dot]   (p22) {};
    \draw(1,1.5)      node[dot]   (p22) {};
    \draw(1,2)      node[dot]   (p22) {};
    \draw(1.5,0)      node[dot]   (p30) {};
    \draw(1.5,0.5)      node[dot]   (p31) {};
    \draw(1.5,1)      node[dot]   (p31) {};
    \draw(1.5,1.5)      node[dot]   (p31) {};
    \draw(2,0)      node[dot]   (p40) {};
    \draw(2,0.5)      node[dot]   (p40) {};
    \draw(2,1)      node[dot]   (p40) {};
    \draw(2.5,0)      node[dot]   (p40) {};
    \draw(2.5,0.5)      node[dot]   (p40) {};
 
    \draw(3,0)      node[dot]   (p40) {};
    \draw [->] (0,0) -- (0,3.5);
    \draw [->] (0,0) -- (3.5,0);
\end{scope}  

\end{tikzpicture}%
\caption{Minkowski sum: $\trgset{2}{3}+\trgset{2}{3} = \trgset{2}{6}$.}%
\label{fig:msum}
\end{figure}

Finally, for  $\set{A} \in \bbN^{\nvar}$, we define its extension\footnote{In the papers \cite{Brachat.etal10LAaiA-Symmetric}, \cite{Laurent.Mourrain09AdM-generalized} and others this notation was used for the set of monomials.} as
\[
\set{A}^{+} \eqdef \set{A} \cup (\set{A} + \{\vect{e}_1\}) \cup \cdots \cup (\set{A} + \{\vect{e}_{\nvar}\}),
\]
where $\vect{e}_{k}$, $1\le k\le\nvar$,  denotes the vector of $\mathbb{N}^{\nvar}$ having a 1 in the $k$-th entry and zeros elsewhere. The exterior boundary of $\set{A}$ is defined as
\[
\delta\set{A} = \set{A}^{+} \setminus \set{A}.
\]
For example, for $\set{A} = \trgset{\nvar}{d}$ we have that
\[
\set{A}^{+} = \trgset{\nvar}{d+1}, \quad \delta\set{A} = \degset{\nvar}{d+1}.
\]

\subsection{Quasi-Hankel and quasi-Vandermonde matrices}
Let $\set{A}$ be a multi-index set ordered according to $\prec $:
\begin{equation}\label{eq:mind_set}
\set{A} = \{ \vect{\alpha}_1, \ldots, \vect{\alpha}_{M}\},\quad \vect{\alpha}_{k} \in \bbN^{\nvar}
\end{equation}
 and $\arr{h} = \{h_{\vect{\alpha}}\}_{\alpha \in \set{D}} \subset \mathbb{C}$ be an array of numbers, where $2\set{A}\subset \set{D}\subset \bbN^{\nvar}$.
Then the symmetric quasi-Hankel  \cite{Mourrain.Pan00Joc-Multivariate} matrix is defined as
\[
\hmatr{\set{A}}{\arr{h}} \eqdef [ h_{\vect{\alpha}_{i} + \vect{\alpha}_{j}} ]_{i,j=1}^{M,M}
\]

\begin{example}
For $\set{A} = \trgset{1}{d} = \{0, \ldots, d\}$, $M = d+1$, 
the quasi-Hankel matrix is the ordinary Hankel matrix 
\begin{equation}\label{eq:hankel}
\hmatr{\set{A}}{\iarr{h}}  = [ h_{k+l} ]_{k,l=0}^{d,d} = 
\left[
\begin{mediumarray}{cccc}
\cellcolor{lightgray}h_0 & \cellcolor{lightgray}h_1 & \cellcolor{lightgray} \cdots  & \cellcolor{lightgray}h_d \\
\cellcolor{lightgray}h_1 & \cellcolor{lightgray}h_2 &  \cellcolor{lightgray}\iddots & h_{d+1} \\
\cellcolor{lightgray}\vdots &   \cellcolor{lightgray} \iddots &  \iddots & \vdots \\
\cellcolor{lightgray}h_d & h_{d+1}  &  \cdots & h_{2d} 
\end{mediumarray}
\right].
\end{equation}
\end{example}

\begin{example}
In case $m > 1$, for example $\set{A} = \trgset{2}{3}$ 
(as in Fig.~\ref{fig:msum}) the matrix $\hmatr{\set{A}}{\iarr{h}} $ has the form 
\begin{equation}\label{eq:qhankel_trgset}
\left[
\begin{smallarray}{c|cc|ccc|cccc}%{c@{}|@{}c@{}c@{}|@{}c@{}c@{}c@{}|c@{}c@{}c@{}c@{}}
\cellcolor{lightgray}h_{0,0} &\cellcolor{lightgray} h_{1,0} & \cellcolor{lightgray} h_{0,1} &  \cellcolor{lightgray} h_{2,0} & \cellcolor{lightgray} h_{1,1} & \cellcolor{lightgray} h_{0,2} & \cellcolor{lightgray} h_{3,0} & \cellcolor{lightgray} h_{2,1} & \cellcolor{lightgray} h_{1,2} & \cellcolor{lightgray} h_{0,3} \\
\hline
\cellcolor{lightgray} h_{1,0} & \cellcolor{lightgray} h_{2,0} & \cellcolor{lightgray} h_{1,1} &  \cellcolor{lightgray} h_{3,0} & \cellcolor{lightgray} h_{2,1} & \cellcolor{lightgray} h_{1,2} & h_{4,0} & h_{3,1} & h_{2,2} & h_{1,3} \\
\cellcolor{lightgray} h_{0,1} & \cellcolor{lightgray} h_{1,1} & \cellcolor{lightgray} h_{0,2} &  \cellcolor{lightgray} h_{2,1} & \cellcolor{lightgray} h_{1,2} & \cellcolor{lightgray} h_{0,3} & h_{3,1} & h_{2,2} & h_{1,3} & h_{0,4} \\
\hline
\cellcolor{lightgray} h_{2,0} & \cellcolor{lightgray} h_{3,0} & \cellcolor{lightgray} h_{2,1} &  h_{4,0} & h_{3,1} & h_{2,2} & h_{5,0} & h_{4,1} & h_{3,2} & h_{2,3} \\
\cellcolor{lightgray} h_{1,1} & \cellcolor{lightgray} h_{2,1} & \cellcolor{lightgray} h_{1,2} &  h_{3,1} & h_{2,2} & h_{1,3} & h_{4,0} & h_{3,2} & h_{2,3} & h_{1,4} \\
\cellcolor{lightgray} h_{0,2} & \cellcolor{lightgray} h_{1,2} & \cellcolor{lightgray} h_{0,3} &  h_{2,2} & h_{1,3} & h_{0,4} & h_{3,2} & h_{2,3} & h_{1,4} & h_{0,5} \\
\hline
\cellcolor{lightgray} h_{3,0} & h_{4,0} & h_{3,1} &  h_{5,0} & h_{4,1} & h_{3,2} & h_{6,0} & h_{5,1} & h_{4,2} & h_{3,3} \\
\cellcolor{lightgray} h_{2,1} & h_{3,1} & h_{2,2} &  h_{4,1} & h_{3,2} & h_{2,3} & h_{5,1} & h_{4,2} & h_{3,3} & h_{2,4} \\
\cellcolor{lightgray} h_{1,2} & h_{2,2} & h_{1,3} &  h_{3,2} & h_{2,3} & h_{1,4} & h_{4,2} & h_{3,3} & h_{2,4} & h_{1,5} \\
\cellcolor{lightgray} h_{0,3} & h_{1,3} & h_{0,4} &  h_{2,3} & h_{1,4} & h_{0,5} & h_{3,3} & h_{2,4} & h_{1,5} & h_{0,6} 
\end{smallarray}
\right],
\end{equation}
where we omitted parentheses in subscripts for conciseness. 
Note that in \eqref{eq:qhankel_trgset}, each block corresponds to a subset $\degset{2}{k}$, $0 \le k \le 6$.
\end{example}

Let $\set{A} \subset \bbZp^{\nvar}$ be the ordered multi-index set defined in \eqref{eq:mind_set},
and $\vect{z}_{1}, \ldots, \vect{z}_{r} \in \bbC^{\nvar}$ be a set of points.
Then the quasi-Vandermonde matrix is defined as
\[
\vmatr{\set{A}}{\vect{z}_{1}, \ldots, \vect{z}_{r}} \eqdef \bmx (\vect{z}_{j})^{\vect{\alpha}_{i}} \emx_{i,j=1}^{N,r}.
\]
In particular, for $m=1$, $\set{A} = \trgset{1}{d} = \{0,\ldots,d\}$, it is just an ordinary rectangular Vandermonde matrix.

\begin{example}
For $\set{A} = \trgset{2}{3}$, $r=3$ and $\vect{z}_k = \bsm \lambda_k \\ \mu_k\esm$, $k\in\{1,2,3\}$,  we have that $\vmatr{\set{A}}{\vect{z}_{1}, \vect{z}_{2}, \vect{z}_{3}} = $
\small\begin{equation}\label{eq:vander_trgset}
\left[\!
\begin{array}{c@{\;}|c@{\;\,}c@{\;\,}|c@{\;\,}c@{\;\,}c@{\;\,}|c@{\;\,}c@{\;\,}c@{\;\,}c@{\;}}
1 & 
\lambda_1 & 
\mu_1 & 
 \lambda_1^2 &
  \lambda_1 \mu_1 &
   \mu_1^2 &
\lambda_1^3 & 
\lambda_1^2 \mu_1 & 
\lambda_1 \mu_1^2  & 
\mu_1^3  \\
1 & 
\lambda_2 & 
\mu_2 & 
 \lambda_2^2 &
  \lambda_2 \mu_2 &
   \mu_2^2 &
\lambda_2^3 & 
\lambda_2^2 \mu_2 & 
\lambda_2 \mu_2^2  & 
\mu_2^3  \\
1 & 
\lambda_3 & 
\mu_3 & 
 \lambda_3^2 &
  \lambda_3 \mu_3 &
   \mu_3^2 &
\lambda_3^3 & 
\lambda_3^2 \mu_3 & 
\lambda_3 \mu_3^2  & 
\mu_3^3  
\end{array}
\right]^{\T}\!\!\!.
\end{equation}\normalsize
\end{example}

Finally, we will use the following definition.
\begin{definition}\label{def:A_independence}
The points $\vect{z}_1, \ldots, \vect{z}_r \in \bbC^{\nvar}$ are called $\set{A}$-independent if 
\[
\rrank \vmatr{\set{A}}{\vect{z}_{1}, \ldots, \vect{z}_{r}} = r.
\]
\end{definition}
The notion of $\set{A}$-independence is equivalent to the fact the monomials $\{\vect{x}^{\vect{\alpha}}\}_{\vect{\alpha}\in \set{A}}$   taken on the grid of points $\{\vect{z}_1, \ldots, \vect{z}_r\}$ form a set of $\#{\set{A}}$ vectors spanning a linear space of dimension $r$. Hence, these monomials can interpolate any function on this grid. 

If $m=1$, the points are always $\set{A}$-independent for any $\set{A}$ with $r \le \#\set{A}$.
Although for $m > 1$ this is no longer the case,  the following remark holds true.

\begin{remark}\label{rem:A_independence_generic}
$\set{A}$-independence is a generic property, 
i.e., points $\{\vect{z}_j\}^r_{j=1}$ randomly drawn according to an absolutely continuous distribution are $\set{A}$-independent if $r \le \#\set{A}$. 
This follows from the fact that the set of $\set{A}$-independent points is open in the Zariski topology.
\end{remark}

\section{Hankel and Quasi-Hankel matrix completion}\label{seq:qh_completion}
% !TEX root = mat_completion.tex
In this paper, we consider the following  problem.
\begin{problem}\label{prob:qh_completion}
Given $\calA = \trgset{m}{d}$, and  $\{h_{\vect{\alpha}}\}_{\vect{\alpha} \in \calA}$,
\begin{equation}\label{eq:qhankel_rankmin}
\minim_{h_{\vect{\alpha}}, \vect{\alpha} \in 2 \calA \setminus \calA} \rrank \hmatr{\set{A}}{\arr{h}},
\end{equation} 
where $\arr{h} = \{h_{\vect{\alpha}}\}_{\vect{\alpha} \in 2\set{A}}$.
\end{problem}
If $m=1$, we have $\calA = \{0,\ldots,d\}$ (the matrix $\scrH_{\calA} (\arr{h})$ shown in \eqref{eq:hankel});
in this case, only the values $h_0,\ldots, h_{d}$ are known (shown in gray in \eqref{eq:hankel}) and $h_{d+1},\ldots,h_{2d}$ are to be completed.
The matrix completion for Hankel matrices is entirely solved \cite{Heinig.Rost84-Algebraic}, and its solution is related to the Sylvester's algorithm for CP decomposition of $2\times \cdots \times 2$ tensors \cite{Iarobbino.Kanev99-Power}. 
We recall the Hankel case in Section~\ref{sec:lin_rec}.

In the general case ($m > 1$), the upper block-triangular part of the quasi-Hankel matrix is known (\textit{e.g.,} in \eqref{eq:qhankel_trgset} it is shown in gray).
As it was shown in \cite{Brachat.etal10LAaiA-Symmetric},  symmetric  CP decomposition can be reduced to solving Problem~\ref{prob:qh_completion}.
% (see Section~\ref{sec:qh_sym_tensors} for more details).
The general quasi-Hankel matrix completion problem  \eqref{eq:qhankel_rankmin} can be solved in some  cases when the rank is sufficiently small (this is exactly the case when the CP tensor decomposition can be solved without considering incomplete quasi-Hankel matrices, \textit{e.g.}, using the Catalecticant algorithm \cite{Iarobbino.Kanev99-Power,Oeding.Ottaviani13JoSC-Eigenvectors}).
We recall these cases in Section~\ref{sec:qh_rank_minim}.

\subsection{Solution of Hankel low-rank completion}\label{sec:lin_rec}
Probably the first complete solution of Problem~\ref{prob:qh_completion} for $m=1$ is  contained in \cite[Ch. II]{Iohvidov82-Toeplitz}, where the problem of matrix completion is called ``singular extension'' of Hankel matrices.
In this section, we will use the theory from \cite{Heinig.Rost84-Algebraic}, and we will provide a summary of results in a simplified form.

For a finite sequence $\vect{h}_d = \bmx h_0 & \ldots& h_{d}\emx^{\T} \in \bbC^{d+1}$, we denote by $\hrank{\vect{h}_d}$ the smallest number $r$ such that there exists a non-zero vector $\vect{q} = \bmx q_0 & q_1&\ldots &q_r \emx^{\T} \neq 0$ for which
\begin{equation}\label{eq:linrec}
q_0 h_{k}  + \ldots + q_{r} h_{k+r}=0, \quad \forall k\in\{0,\ldots, d-r\}.
\end{equation}
The value $\hrank{\vect{h}_d}$ is equal to the maximal rank of the submatrices of the known triangular part of the Hankel matrix \eqref{eq:hankel} (or, equivalently to the maximal rank of a Hankel matrix that can be constructed only from the elements of the vector $\vect{h}_d$). 
The number $\hrank{\vect{h}_d}$ is called \emph{the first characteristic degree} of $\vect{h}_d$ \cite[Def. 5.3, page 81]{Heinig.Rost84-Algebraic}.
We will also call the vector  $\vect{q}$ as \emph{the characteristic vector}\footnote{In \cite{Heinig.Rost84-Algebraic}, $\vect{q}$ does not have a specific name and is denoted by  $p$.} of $\vect{h}_d$. Next, we recall the following basic properties of ${\vect{h}_d}$ and $\vect{q}$.
\begin{itemize}
\item For any $\vect{h}_d \in \bbC^{d+1}$, $\hrank{\vect{h}_d} \le \frac{d+2}{2}$ (see \cite[Prop. 5.4]{Heinig.Rost84-Algebraic}).
\item If, in addition, $\hrank{\vect{h}_d} < \frac{d+2}{2}$, then the characteristic vector $\vect{q}$ is unique up to scaling by a constant (see \cite[page 84]{Heinig.Rost84-Algebraic}).
\end{itemize}
In what follows, we assume that $q_r \neq 0$ (this is a generic case). The treatment of the case $q_r = 0$ can be found in \cite[page 84]{Heinig.Rost84-Algebraic}).
Then an explicit representation of ${\vect{h}_d}$ is determined by the \emph{characteristic polynomial} 
\begin{equation}\label{eq:char_poly}
q(z) = \sum\limits_{j=0}^r q_j z^j  = q_r \sum\limits_{k=1}^s (z - \lambda_k)^{\nu_k},
\end{equation}
where  $\lambda_k \in \bbC$ are distinct,  $\nu_k$ are the multiplicities of the roots, and $r = \nu_1 + \cdots + \nu_s$.
\begin{proposition}[A special case of {\cite[Thm 8.1]{Heinig.Rost84-Algebraic}}]
If $q_r \neq 0$ and all the roots of $q(z)$ are simple ($\nu_k = 1$, $s=r$), then the sequence $\vect{h}_d$ admits a representation
\begin{equation}\label{eq:simple_roots}
h_k = \sum\limits_{j=1}^r c_{j} \lambda_j^k.
\end{equation}
\end{proposition}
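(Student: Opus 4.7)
The plan is to solve the linear recurrence \eqref{eq:linrec} by the classical route: identify a solution space, find enough independent geometric solutions, and then use a dimension count to conclude that they span.

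First I would set
\[
V \defeq \{\vect{u} = (u_0, \ldots, u_d)^{\T} \in \bbC^{d+1} : q_0 u_k + q_1 u_{k+1} + \cdots + q_r u_{k+r} = 0,\; k = 0, \ldots, d-r\}.
\]
This is clearly a linear subspace of $\bbC^{d+1}$. Because $q_r \neq 0$ by assumption, the recurrence can be solved explicitly for the leading term,
\[
u_{k+r} = -q_r^{-1}(q_0 u_k + q_1 u_{k+1} + \cdots + q_{r-1} u_{k+r-1}),
\]
so the linear map $V \to \bbC^r$ sending $\vect{u}$ to its first $r$ coordinates $(u_0, \ldots, u_{r-1})$ is a bijection. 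Consequently $\dim V = r$.

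Next I would exhibit $r$ explicit elements of $V$: for each root $\lambda_j$ of $q(z)$, consider the geometric sequence $\vect{g}_j \defeq (1, \lambda_j, \lambda_j^2, \ldots, \lambda_j^d)^{\T}$. Substitution gives
\[
\sum_{\ell=0}^{r} q_\ell \lambda_j^{k+\ell} = \lambda_j^{k} q(\lambda_j) = 0,
\]
so $\vect{g}_j \in V$ for every $j \in \{1, \ldots, r\}$. Linear independence of $\vect{g}_1, \ldots, \vect{g}_r$ follows by looking at the first $r$ coordinates: the $r \times r$ matrix $[\lambda_j^{k}]_{k=0,\,j=1}^{r-1,\,r}$ is an ordinary Vandermonde matrix with determinant $\prod_{1 \le i < j \le r}(\lambda_j - \lambda_i) \neq 0$, since the hypothesis (simple roots, $s=r$) guarantees that the $\lambda_j$ are pairwise distinct.

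Since $\dim V = r$ and $\vect{g}_1, \ldots, \vect{g}_r$ are $r$ linearly independent vectors of $V$, they form a basis of $V$. Because the given sequence $\vect{h}_d$ lies in $V$ by the definition of $\hrank{\vect{h}_d}$ and its characteristic vector $\vect{q}$, there exist unique scalars $c_1, \ldots, c_r \in \bbC$ with $\vect{h}_d = \sum_{j=1}^{r} c_j \vect{g}_j$, which is exactly the representation \eqref{eq:simple_roots}. I do not anticipate any real obstacle here: the only point that requires a moment's care is making sure that the dimension count for $V$ uses the finite length $d+1$ correctly, but this is immediate from $q_r \neq 0$ which renders the recurrence an honest forward-substitution rule.
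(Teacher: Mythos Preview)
Your argument is correct and is exactly the classical route for linear recurrences with simple characteristic roots: the solution space has dimension $r$ because $q_r\neq 0$ makes the recurrence a forward substitution, the $r$ geometric sequences lie in it and are independent by the Vandermonde determinant, hence they form a basis and $\vect{h}_d$ expands as claimed. The only implicit check is that the sequence is long enough for the Vandermonde step, i.e.\ $r\le d+1$, which follows from $\hrank{\vect{h}_d}\le\frac{d+2}{2}$.

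There is nothing to compare against in the paper itself: the proposition is stated as a special case of \cite[Thm.~8.1]{Heinig.Rost84-Algebraic} and no proof is given here. Your self-contained argument is the standard one and is what that reference ultimately relies on in this simple-root case; the cited theorem is more general (covering multiple roots and the case $q_r=0$), which is why the paper defers to it rather than reproducing the elementary case.
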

\begin{example}
For $\vect{h}_d = \bmx 1 & \lambda & \cdots & \lambda^d \emx^{\T}$ (see Theorem~\ref{thm:rank_one}), we have $\hrank{\vect{h}_d}  = 1$ and the characteristic vector can be chosen as  $\vect{q} = \bmx -\lambda & 1 \emx^{\T}$.
\end{example}
The representation \eqref{eq:simple_roots}  is called a \emph{canonical representation} of $\vect{h}_d$ (or the unique canonical representation in case $\hrank{\vect{h}_d} < \frac{d+2}{2}$).
In the general case \eqref{eq:char_poly}, there are multiple roots and the canonical representation has a more complicated form, as stated by the proposition below.

\begin{proposition}[A special case of {\cite[Thm 8.1]{Heinig.Rost84-Algebraic}}]\label{prop:canonical_hankel_general}
Let  $q_r \neq 0$, $\lambda_1 = 0$ and $\lambda_k \neq 0$, and the multiplicities $\nu_k$ of the roots $\lambda_k$ in \eqref{eq:char_poly} are such that $\nu_1 \ge 0$ and $\nu_k > 1$ for $k \ge 2$. Then the sequence $\vect{h}_d$ has the form
\begin{equation}\label{eq:multiple_roots}
h_k = \sum\limits_{j=2}^s c_j(k) \lambda_j^k + \sum\limits_{l=0}^{\nu_1-1} c_{1,l} \delta(k,l), 
\end{equation}
where $c_j(k)$ are polynomials of degree $(\nu_k-1)$ and $\delta(k,l)$ is the Kronecker symbol.
\end{proposition}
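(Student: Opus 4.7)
The plan is to translate \eqref{eq:linrec} into a linear recurrence on $\bbC^{d+1}$, identify its full $r$-dimensional solution space by exhibiting an explicit basis, and then read off \eqref{eq:multiple_roots} as the unique expansion of $\vect{h}_d$ in that basis. First, since $q_r \neq 0$, I would recast \eqref{eq:linrec} as the forward recursion $h_{k+r} = -q_r^{-1}\sum_{j=0}^{r-1} q_j h_{k+j}$; the initial entries $h_0,\ldots,h_{r-1}$ are then free and uniquely determine the remaining $h_r,\ldots,h_d$, so the set of sequences in $\bbC^{d+1}$ satisfying \eqref{eq:linrec} is a linear subspace of dimension exactly~$r$. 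It is therefore enough to produce $r$ linearly independent solutions, each of the form predicted by \eqref{eq:multiple_roots}.

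For each nonzero root $\lambda_j$, $j \ge 2$, of multiplicity $\nu_j$, I would verify that the $\nu_j$ sequences $(k^l \lambda_j^k)_{k=0}^d$, $l = 0,\ldots,\nu_j-1$, all satisfy \eqref{eq:linrec} via the standard differentiation trick: writing the recurrence as $q(E)h = 0$ for the shift operator $E$, one has $q(E)\lambda^k = q(\lambda)\lambda^k$, and after $l$-fold differentiation in $\lambda$ the Leibniz rule yields $q(E)\bigl[k(k-1)\cdots(k-l+1)\lambda^{k-l}\bigr] = \sum_{i=0}^l \binom{l}{i} q^{(i)}(\lambda)\, k(k-1)\cdots(k-l+i+1)\lambda^{k-l+i}$, which vanishes at $\lambda = \lambda_j$ for every $l < \nu_j$ since $q^{(i)}(\lambda_j) = 0$ for $i < \nu_j$. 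Because $\lambda_j \neq 0$, these $\nu_j$ sequences span the same subspace as $\{k^l \lambda_j^k\}_{l=0}^{\nu_j-1}$, producing the polynomial prefactor $c_j(k)$ of degree $\nu_j - 1$ required in \eqref{eq:multiple_roots}. For the zero root $\lambda_1 = 0$ of multiplicity $\nu_1$, the factorization $q(z) = z^{\nu_1}\tilde{q}(z)$ forces $q_0 = \cdots = q_{\nu_1-1} = 0$, and a direct computation gives $\sum_{j=0}^r q_j\, \delta(k+j,l) = q_{l-k}$, which vanishes for every admissible $k$ whenever $l < \nu_1$; hence the Kronecker delta sequences $\delta(\cdot,l)$, $l = 0,\ldots,\nu_1-1$, are also solutions. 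Collecting, I obtain $\nu_1 + \sum_{j=2}^s \nu_j = r$ candidate solutions of the desired form.

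The main obstacle is then the linear independence of these $r$ sequences inside $\bbC^{d+1}$, which I would handle as follows. The bound $r \le (d+2)/2$ from \cite[Prop.~5.4]{Heinig.Rost84-Algebraic} gives $d \ge 2r - 2 \ge r - 1$, so the index range $k \in \{\nu_1,\ldots,d\}$ contains at least $r - \nu_1$ consecutive integers. On this range the Kronecker delta contributions identically vanish, and a generalized (confluent) Vandermonde argument applied to the distinct nonzero bases $\lambda_j$ with multiplicities $\nu_j$ forces each polynomial prefactor $c_j(k)$ to be zero. Substituting back into $\{0,\ldots,\nu_1-1\}$, the delta sequences are trivially independent on their support, so the coefficients $c_{1,l}$ must vanish as well. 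This establishes a basis of the $r$-dimensional solution space, and expanding the given $\vect{h}_d$ in it yields exactly the canonical representation \eqref{eq:multiple_roots}.
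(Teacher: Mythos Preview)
The paper does not actually prove this proposition: it is quoted as a special case of \cite[Thm 8.1]{Heinig.Rost84-Algebraic} and left without argument. Your write-up is a correct, self-contained proof along the classical lines --- identifying the solution space of \eqref{eq:linrec} as an $r$-dimensional subspace of $\bbC^{d+1}$ (using $q_r\neq 0$), exhibiting the standard solutions $k^l\lambda_j^k$ for nonzero roots and $\delta(\cdot,l)$ for the zero root, and then verifying their linear independence.

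Two minor remarks. First, the independence step rests on the interval $\{\nu_1,\ldots,d\}$ containing at least $r-\nu_1$ indices; you get this from $r\le (d+2)/2$, which indeed gives $d\ge 2r-2\ge r-1$ for $r\ge 1$, but the ``confluent Vandermonde argument'' could be spelled out: the combination $g_k=\sum_{j\ge 2}\sum_l c_{j,l}k^l\lambda_j^k$ satisfies a linear recurrence of order $r-\nu_1$ whose characteristic polynomial $\prod_{j\ge 2}(z-\lambda_j)^{\nu_j}$ has nonzero constant term (all $\lambda_j$ with $j\ge 2$ are nonzero), so vanishing on $r-\nu_1$ consecutive indices propagates both forward and backward, and the classical independence over $\bbZ$ finishes the job. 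Second, the hypothesis ``$\nu_k>1$ for $k\ge 2$'' in the statement is almost certainly a typographical slip for $\nu_k\ge 1$; your proof nowhere uses the strict inequality and in fact covers the general case.
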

\begin{remark}
In the statement of Proposition~\ref{prop:canonical_hankel_general}, if $\nu_1=0$ then the second term in \eqref{eq:multiple_roots} is absent. 
\end{remark}
\begin{example}
In the extreme case $q(z) = q_r z^r$ (\textit{i.e.}, $r=\nu_1$), the canonical representation \eqref{eq:multiple_roots} becomes
\begin{equation}\label{eq:initial_zero_root}
\vect{h}_d = \bmx c_{1,0} & \cdots & c_{1,r-1} & 0 & \cdots & 0 \emx^{\T},
\end{equation}
\end{example}
Apart from the explicit form of the sequence $\vect{h}_d$, the characteristic polynomial gives a solution to the rank minimization problem \eqref{eq:qhankel_rankmin} (for $m=1$).
\begin{proposition}[{A corollary of \cite[Thm. 5.14]{Heinig.Rost84-Algebraic}}]
Let $\vect{q} \in \bbC^{r+1}$ be a characteristic vector of $\vect{h}_d$ with  $q_r  \neq 0$, $r = \hrank{\vect{h}_d}$. Then it holds that
\begin{enumeratex} 
\item The rank of the minimal rank completion of $\hmatr{\calA}{\iarr{h}}$ in \eqref{eq:hankel} is $r$.
\item A minimum rank completion $h_{d+1}, \ldots, h_{2d}$ is given by
the recursive continuation, for $k > d-r$:
\begin{equation}\label{eq:canonical_completion}
~\hspace{-3em} h_{k+r} = -\frac{1}{q_r} (q_0 h_{k}  + \cdots + q_{r-1} h_{k+r-1}) 
\end{equation}
In addition, the values of $h_{k+r}$ for $k > d-r$ can be obtained by using the corresponding formula of canonical representation, namely \eqref{eq:simple_roots} or \eqref{eq:multiple_roots}.
\item If $r < \frac{d+2}{2}$, then the minimum rank completion is unique.
\end{enumeratex}
\end{proposition}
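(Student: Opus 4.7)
The plan is to reduce all three parts to the linear-recurrence structure carried by the characteristic vector $\vect{q}$, handling the rank lower bound, the explicit completion, and the uniqueness in turn.

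For the lower bound in (i), I would first observe that by definition of $\hrank{\vect{h}_d}=r$ there exists a Hankel submatrix of $\hmatr{\calA}{\iarr{h}}$ built entirely from the known entries $h_0,\ldots,h_d$ whose rank equals $r$. Since this submatrix sits inside $\hmatr{\calA}{\iarr{h}}$ for every completion $\iarr{h}$, we immediately get $\rrank \hmatr{\calA}{\iarr{h}} \ge r$.

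For the matching upper bound in (ii), I would define $h_{d+1},\ldots,h_{2d}$ via the recursion \eqref{eq:canonical_completion}. The defining relation $\sum_{j=0}^{r} q_j h_{k+j}=0$, which by hypothesis holds for $k\in\{0,\ldots,d-r\}$, then propagates by construction to all $k\in\{0,\ldots,2d-r\}$. Applied entrywise this recurrence shows that every column of $\hmatr{\calA}{\iarr{h}}$ of index $\ge r$ is a fixed linear combination of the preceding $r$ columns (with coefficients $-q_0/q_r,\ldots,-q_{r-1}/q_r$), so $\rrank \hmatr{\calA}{\iarr{h}}\le r$. Combined with (i), this yields exact equality and establishes the first half of (ii). The canonical-representation statement in the second sentence of (ii) then follows from standard solution theory for homogeneous linear recurrences: any sequence annihilated by $q(z)=q_r\prod_k(z-\lambda_k)^{\nu_k}$ has general term in the span of the basic solutions $k\mapsto k^j\lambda_k^k$ (with $\lambda_1=0$ giving Kronecker deltas), the coefficients being determined by the first $r$ initial values. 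Hence the extended entries coincide with those produced by \eqref{eq:simple_roots} or \eqref{eq:multiple_roots}.

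For the uniqueness in (iii), the input is the basic property recalled earlier: when $\hrank{\vect{h}_d}<\tfrac{d+2}{2}$, the characteristic vector $\vect{q}$ is unique up to scalar multiples. Any alternative rank-$r$ completion $\widetilde{\iarr{h}}$ would produce a length-$(2d+1)$ sequence whose Hankel matrix still has rank $r$, hence satisfies a linear recurrence of order $r$; restricting this recurrence to the known initial segment gives a characteristic vector of $\vect{h}_d$, which by uniqueness is proportional to $\vect{q}$. The recurrence therefore coincides with \eqref{eq:canonical_completion}, forcing $\widetilde{h}_{d+j}=h_{d+j}$ for all $j\in\{1,\ldots,d\}$.

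The main obstacle is the rank-$r$ upper bound in (ii): one has to argue that the recurrence, which is only guaranteed on the known initial segment, really does propagate cleanly through the full $(d+1)\times(d+1)$ Hankel array while preserving a rank-$r$ column span. This is the technical content of \cite[Thm. 5.14]{Heinig.Rost84-Algebraic}, and it is here that the hypothesis $q_r\neq 0$ is crucial, since it guarantees that the recursion \eqref{eq:canonical_completion} expresses each new entry unambiguously in terms of the previous $r$; the degenerate case $q_r=0$ would require the separate treatment mentioned in the paper. Once the recurrence is propagated, linking it to the closed forms \eqref{eq:simple_roots}--\eqref{eq:multiple_roots} is a routine application of the partial-fraction/companion-matrix machinery.
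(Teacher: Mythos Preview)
The paper does not supply its own proof of this proposition; it is stated as a direct corollary of \cite[Thm.~5.14]{Heinig.Rost84-Algebraic} and left without argument. Your proposal is a correct, self-contained derivation along standard lines: the lower bound in (i) via the rank of a known Hankel submatrix, the upper bound in (ii) by propagating the recurrence and reading off the column dependence, and uniqueness in (iii) by pulling back any rank-$r$ completion to a characteristic vector of the initial segment and invoking its uniqueness when $r<\tfrac{d+2}{2}$. One small point worth tightening in (iii): you should make explicit that the first $r+1$ columns of the completed $(d{+}1)\times(d{+}1)$ Hankel matrix have rank at most $r$ (since the full matrix does), which guarantees a nonzero right-kernel vector $\tilde{\vect{q}}\in\bbC^{r+1}$; restricting to the rows indexed by $k\in\{0,\ldots,d-r\}$ then shows $\tilde{\vect{q}}$ satisfies \eqref{eq:linrec} and is therefore a characteristic vector of $\vect{h}_d$, hence proportional to $\vect{q}$ with $\tilde q_r\neq 0$. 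With that clarification your argument is complete.
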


% !TEX root = mat_completion.tex
\subsection{Solution of quasi-Hankel low-rank completion}\label{sec:qh_rank_minim}
Unlike Hankel matrices, for $m > 1$ the quasi-Hankel completion problem does not have a closed form solution.
The simplest case when the solution can be easily found is given by a flat extension theorem {\cite[Thm. 1.4]{Laurent.Mourrain09AdM-generalized}}.
\begin{proposition}[Corollary of {\cite[Thm. 1.4]{Laurent.Mourrain09AdM-generalized}}]\label{prop:flat_ext}
Let $\set{A} = \trgset{\nvar}{d}$, and the values $\{{h}_{\vect{\alpha}}\}_{\vect{\alpha} \in \set{A}}$ be given.
Moreover, let $d' \eqdef \lfloor \frac{d}{2} \rfloor$, $\set{B} \eqdef \trgset{\nvar}{d'-1}$, and it holds that 
\[
\rank{\hmatr{\set{B}}{\arr{h}}} = \rank{\hmatr{\set{B}^{+}}{\arr{h}}} = r.
\]
Then the unique solution of Problem~\ref{prob:qh_completion} has rank $r$.
\end{proposition}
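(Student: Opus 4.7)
The plan is to deduce the proposition as a direct application of the flat extension theorem \cite{Laurent.Mourrain09AdM-generalized}. The first step will be to check that the hypothesis is stated entirely in terms of the prescribed entries. Since $d' = \lfloor d/2 \rfloor$ we have $2\set{B}^{+} = \trgset{\nvar}{2d'} \subseteq \trgset{\nvar}{d} = \set{A}$, so every entry of $\hmatr{\set{B}}{\arr{h}}$ and $\hmatr{\set{B}^{+}}{\arr{h}}$ is one of the known values $\{h_{\vect{\alpha}}\}_{\vect{\alpha}\in\set{A}}$, and the two rank equalities are genuine constraints on the input.

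I will then invoke the flat extension theorem to produce a unique sequence $\arr{h}^{*}$ on the whole of $\bbN^{\nvar}$ which extends the partial array from $2\set{B}^{+}$ and whose associated infinite generalized Hankel operator has rank exactly $r$. Restricting $\arr{h}^{*}$ to $2\set{A}$ supplies a specific candidate completion. To verify that it solves \eqref{eq:qhankel_rankmin} with optimal value $r$, I will combine two submatrix arguments: $\rank{\hmatr{\set{A}}{\arr{h}^{*}}} \le r$ because $\hmatr{\set{A}}{\arr{h}^{*}}$ is a finite block of a rank-$r$ operator, and $\rank{\hmatr{\set{A}}{\arr{h}^{*}}} \ge r$ because the inclusion $\set{B} \subseteq \set{A}$ exhibits the rank-$r$ matrix $\hmatr{\set{B}}{\arr{h}}$ as a principal submatrix.

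For uniqueness, I will apply the theorem a second time. If $\arr{h}'$ is any competing completion with $\rank{\hmatr{\set{A}}{\arr{h}'}} \le r$, then, because $2\set{B}^{+} \subseteq \set{A}$, the submatrices $\hmatr{\set{B}}{\arr{h}'}$ and $\hmatr{\set{B}^{+}}{\arr{h}'}$ coincide entry by entry with the corresponding matrices built from the original data, hence still have rank $r$. Thus the flatness hypothesis holds for $\arr{h}'$ as well, and the uniqueness clause of the flat extension theorem forces $\arr{h}' = \arr{h}^{*}$ on $2\set{A}$.

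The substantive content, and the only real obstacle, is hidden inside the cited theorem itself, which I am using here as a ready-made tool. Its mechanism is that flatness means every column of $\hmatr{\set{B}^{+}}{\arr{h}}$ indexed by a boundary multi-index $\vect{\beta} \in \delta\set{B}$ is a unique linear combination of the columns indexed by $\set{B}$; these relations yield pairwise commuting multiplication operators on an $r$-dimensional quotient space whose joint action propagates the sequence unambiguously to every higher-degree moment. With that machinery cited, the corollary itself reduces to bookkeeping about index sets and principal submatrices.
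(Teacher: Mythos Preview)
The paper gives no proof of this proposition at all; it is simply stated as a corollary of the flat extension theorem of Laurent--Mourrain and then immediately used. Your proposal is precisely the derivation that the label ``Corollary of \cite[Thm.~1.4]{Laurent.Mourrain09AdM-generalized}'' implicitly promises: verify that $2\set{B}^{+}\subseteq\set{A}$ so the flatness hypothesis concerns only known data, invoke the cited theorem to obtain the unique rank-$r$ infinite extension, and sandwich ranks via principal submatrices. So your approach and the paper's (implicit) approach coincide.

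One small point worth tightening in the uniqueness paragraph: the uniqueness clause of the flat extension theorem concerns infinite extensions, whereas a competing completion $\arr{h}'$ is a priori only defined on $2\set{A}$. To close the loop you should observe that, because $\hmatr{\set{B}}{\arr{h}}\subseteq\hmatr{\trgset{\nvar}{d-1}}{\arr{h}'}\subseteq\hmatr{\set{A}}{\arr{h}'}$ all have rank $r$, the pair $(\trgset{\nvar}{d-1},\set{A})$ is itself flat for $\arr{h}'$; a second application of the theorem then extends $\arr{h}'$ to an infinite rank-$r$ sequence, which must coincide with $\arr{h}^{*}$ by the first application. This is surely what you intend by ``the flatness hypothesis holds for $\arr{h}'$ as well,'' but making the two invocations explicit removes any appearance of circularity.
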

Proposition~\ref{prop:flat_ext} induces bounds on rank that are too restrictive. For example, for $\set{A} = \trgset{2}{3}$, $d'=1$ and only rank-one cases can be treated. 
In what follows, we consider arrays of special form, for which Proposition~\ref{prop:flat_ext} can be extended. 

Let $\set{C} \subset \bbN^{\nvar}$ be arbitrary, $\vect{z}_{1}, \ldots, \vect{z}_{r} \in \bbC^{\nvar}$,
$c_1, \ldots, c_r \in \bbC$ be some coefficients, and the array of coefficients $\iarr{h} = \{{h}_{\vect{\alpha}}\}_{\vect{\alpha} \in 2\set{C}}$ be defined as
\begin{equation}\label{eq:exp_array}
h_{\vect{\alpha}} = \sum\limits_{k=1}^r c_k \vect{z}_k^{\vect{\alpha}}.
\end{equation}
It is easy to see that the following lemma holds true

\begin{lemma}\label{lem:rank_exp_array}
For $\arr{h}$ defined as in \eqref{eq:exp_array}, the quasi-Hankel matrix admits the factorization $\hmatr{\set{C}}{\arr{h}} =$
\begin{equation}\label{eq:vandermonde_fac}
 \vmatr{\set{C}}{\vect{z}_{1}, .., \vect{z}_{r}}
\diag{c_1, \ldots, c_r}
(\vmatr{\set{C}}{\vect{z}_{1}, .., \vect{z}_{r}})^{\top}
\end{equation}
and $\rank{\hmatr{\set{C}}{\arr{h}}} \le r$.
If, in addition, $c_k \neq 0$ for all $k$, and  $\vect{z}_1, \ldots, \vect{z}_r$  are $\set{C}$-independent, then 
\[
\rank{\hmatr{\set{C}}{\arr{h}}} = r.
\]
\end{lemma}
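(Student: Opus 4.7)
The plan is to verify the factorization by a direct index-by-index computation, derive the rank inequality as an immediate consequence, and then use invertibility arguments to handle the equality case.

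First, I would establish the factorization \eqref{eq:vandermonde_fac} by inspecting the $(i,j)$ entry of $\hmatr{\set{C}}{\arr{h}}$. By definition this entry is $h_{\vect{\alpha}_i + \vect{\alpha}_j}$, and substituting \eqref{eq:exp_array} yields
\[
h_{\vect{\alpha}_i + \vect{\alpha}_j} = \sum_{k=1}^{r} c_k\, \vect{z}_k^{\vect{\alpha}_i + \vect{\alpha}_j} = \sum_{k=1}^{r} \vect{z}_k^{\vect{\alpha}_i}\, c_k\, \vect{z}_k^{\vect{\alpha}_j}.
\]
Writing $\vect{V} \eqdef \vmatr{\set{C}}{\vect{z}_1,\ldots,\vect{z}_r}$ and $\vect{D} \eqdef \diag{c_1,\ldots,c_r}$, the right-hand side is exactly the $(i,j)$ entry of $\vect{V}\vect{D}\vect{V}^{\top}$, proving \eqref{eq:vandermonde_fac}.

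Second, the inequality $\rank{\hmatr{\set{C}}{\arr{h}}} \le r$ is immediate from the factorization, since the middle factor $\vect{D}$ has size $r\times r$ and hence $\rank{\vect{V}\vect{D}\vect{V}^{\top}} \le \rank{\vect{D}} \le r$.

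Third, for the equality case I would argue via the map-composition picture. The assumption $c_k \neq 0$ for all $k$ makes $\vect{D}$ invertible, while Definition~\ref{def:A_independence} (\set{C}-independence) gives $\rank{\vect{V}} = r$, so $\vect{V}$ has linearly independent columns. Viewing $\vect{V}\vect{D}\vect{V}^{\top}$ as the composition
\[
\bbC^{\#\set{C}} \xrightarrow{\vect{V}^{\top}} \bbC^{r} \xrightarrow{\vect{D}} \bbC^{r} \xrightarrow{\vect{V}} \bbC^{\#\set{C}},
\]
the first map is surjective (transpose of an injective linear map between finite-dimensional spaces), $\vect{D}$ is bijective, and $\vect{V}$ is injective. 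Hence the image of the composition has dimension $r$, giving $\rank{\hmatr{\set{C}}{\arr{h}}} = r$.

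There is no real obstacle here: the lemma is a routine verification, and the ``is easy to see'' phrasing in the statement reflects this. The only mild point of care is that we are working over $\bbC$, so one should not be tempted to invoke positive-definiteness arguments for $\vect{V}\vect{V}^{\top}$; the injectivity/surjectivity framing above avoids this pitfall cleanly.
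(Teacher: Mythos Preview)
Your proof is correct. The paper does not actually supply a proof of this lemma; it is introduced with the phrase ``It is easy to see that the following lemma holds true,'' so there is nothing to compare against beyond confirming that your direct entrywise verification of the factorization and your rank argument are exactly the routine checks the authors have in mind.
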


Next, we describe an easy improvement of Proposition~\ref{prop:flat_ext} for arrays of form \eqref{eq:exp_array}.

\begin{proposition}\label{prop:flat_ext_exp_array}
Let $\set{A} = \trgset{\nvar}{d}$, $d' \eqdef \lfloor \frac{d}{2} \rfloor$,~$\set{B} \eqdef \trgset{\nvar}{d'}$ (it is easy to see that $2\set{B} \subset \set{A}$).
Assume that the coefficients $\{h_{\vect{\alpha}}\}_{\vect{\alpha} \in \set{A}}$ have the form \eqref{eq:exp_array}, where the points $\vect{z}_1,\ldots,\vect{z}_{r}$ are $\set{B}$-independent and
$c_1,\ldots,c_r$ are nonzero.
Then, the following hold true
\begin{enumeratex}
\item The rank of the minimal completion in Problem~\ref{prob:qh_completion} is $r$. A minimal rank completion is  obtained by defining $\{h_{\vect{\alpha}}\}_{\vect{\alpha} \in 2\set{A}}$ using the same formula  \eqref{eq:exp_array} (called canonical completion).
\item If $d$ is odd, the canonical completion  is unique.
\item If $d$ is even, and $\vect{z}_1, \ldots, \vect{z}_r$ are $\trgset{\nvar}{d'-1}$-independent, then the
completion is unique (the canonical completion of Eq. \eqref{eq:exp_array} is the only possible).
\end{enumeratex}
\end{proposition}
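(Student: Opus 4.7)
For part~(i), I would define the canonical completion $h^{\mathrm{can}}$ by extending \eqref{eq:exp_array} to every $\vect\alpha \in 2\set{A}$; Lemma~\ref{lem:rank_exp_array} applied with $\set{C} = \set{A}$ then yields the Vandermonde factorization \eqref{eq:vandermonde_fac} and hence $\rank \scrH_{\set{A}}(h^{\mathrm{can}}) \le r$. For the converse, since $2\set{B} \subset \set{A}$ the principal $\set{B}$-submatrix of any completion $\scrH_{\set{A}}(h')$ equals the fixed matrix $\scrH_{\set{B}}(h)$, which by Lemma~\ref{lem:rank_exp_array} with $\set{C} = \set{B}$ and the assumed $\set{B}$-independence has rank exactly $r$. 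Thus $\rank \scrH_{\set{A}}(h') \ge r$ for every completion, proving (i).

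For part~(iii) with $d = 2d'$ even, I would apply Proposition~\ref{prop:flat_ext} directly at the given $d$: its hypotheses $\rank \scrH_{\trgset{\nvar}{d'-1}}(h) = \rank \scrH_{\set{B}}(h) = r$ follow from Lemma~\ref{lem:rank_exp_array} combined with the $\trgset{\nvar}{d'-1}$-independence (the extra hypothesis of (iii)) and the $\set{B}$-independence in the proposition's header. Proposition~\ref{prop:flat_ext} then delivers a unique rank-$r$ completion, which coincides with $h^{\mathrm{can}}$ by part~(i).

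For part~(ii) with $d = 2d'+1$ odd, the $\trgset{\nvar}{d'-1}$-independence is not available, so Proposition~\ref{prop:flat_ext} cannot be applied at $\tilde d = d$. My plan is first to determine the boundary entries $h'_{\vect\gamma}$, $\vect\gamma \in \degset{\nvar}{d+1}$, that every rank-$r$ completion $h'$ must take, and then to invoke Proposition~\ref{prop:flat_ext} at $\tilde d = d+1$ (whose $\lfloor\tilde d/2\rfloor - 1 = d'$ makes its index set coincide with our $\set{B}$). Since $\set{B}^+ = \trgset{\nvar}{d'+1} \subset \set{A}$, the submatrix $\scrH_{\set{B}^+}(h')$ of $\scrH_{\set{A}}(h')$ has rank at most $r$ and contains $\scrH_{\set{B}}(h)$ of rank $r$, so $\rank \scrH_{\set{B}^+}(h') = r$. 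Writing the symmetric block decomposition
\[
\scrH_{\set{B}^+}(h') = \bmx A & B \\ B^{\T} & C \emx
\]
with blocks indexed by $\set{B}$ and $\delta\set{B} = \degset{\nvar}{d'+1}$, the blocks $A = \scrH_{\set{B}}(h)$ and $B$ involve only entries $h_{\vect\alpha+\vect\beta}$ with $|\vect\alpha+\vect\beta| \le 2d'+1 = d$ and are therefore fixed by the given data, while the Vandermonde factorization \eqref{eq:vandermonde_fac} places the columns of $B$ inside the column space of $A$. The rank identity $\rank \scrH_{\set{B}^+}(h') = \rank A$ then forces $C = B^{\T} A^{+} B$, so $C$ is uniquely determined. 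Since $\delta\set{B} + \delta\set{B} = \degset{\nvar}{d+1}$, this pins down every $h'_{\vect\gamma}$ with $\vect\gamma \in \degset{\nvar}{d+1}$; Proposition~\ref{prop:flat_ext} at $\tilde d = d+1$ then yields a unique rank-$r$ extension to $\trgset{\nvar}{2(d+1)} \supset 2\set{A}$, which must coincide with $h^{\mathrm{can}}$.

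The main obstacle is the Schur-complement step in part~(ii): I need to verify carefully that the Vandermonde column-space inclusion holds (so that the rank equality indeed forces $C = B^{\T} A^{+} B$ rather than merely some inequality) and that $\delta\set{B} + \delta\set{B}$ genuinely exhausts $\degset{\nvar}{d+1}$, ensuring no boundary entry of the completion escapes determination by the known data.
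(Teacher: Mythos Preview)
Your plan is correct and follows essentially the same route as the paper. Part~(i) is identical; part~(iii) is identical (direct appeal to Proposition~\ref{prop:flat_ext}); and in part~(ii) both you and the paper first pin down the block $C$ of $\scrH_{\set{B}^+}(h')$ (the paper cites Woerdeman's block-completion theorem, you write out the Schur complement $C=B^{\T}A^{+}B$ and the column-space inclusion explicitly --- these are the same argument), then invoke the flat extension principle.

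One small point worth tightening in your part~(ii): applying Proposition~\ref{prop:flat_ext} at $\tilde d=d+1$ gives uniqueness of the rank-$r$ completion of $\scrH_{\trgset{\nvar}{d+1}}$, not directly of $\scrH_{\set{A}}$; to conclude, you still need that any rank-$r$ completion $h'$ of $\scrH_{\set{A}}$ is the restriction of some rank-$r$ completion of $\scrH_{\trgset{\nvar}{d+1}}$. The paper sidesteps this by citing Proposition~\ref{prop:flat_ext} somewhat loosely at the original level; the cleanest fix in either case is to appeal to the underlying flat extension theorem \cite[Thm.~1.4]{Laurent.Mourrain09AdM-generalized} with the pair $(\set{B},\set{B}^+)$ directly, which yields uniqueness of the rank-$r$ extension to $2\set{A}$ in one step.
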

\proofapp

For example, for the completion of structure shown in \eqref{eq:qhankel_trgset} ($m=2$ and $d=3$), Proposition~\ref{prop:flat_ext_exp_array} covers cases up to rank $3$.
Indeed, let  $\vect{z}_k = \bsm \lambda_k \\ \mu_k\esm$, $k\in\{1,2,3\}$,  we have that if the Vandermonde matrix has full rank \eqref{eq:vander_trgset}, then the matrix completion is unique and is given by the canonical completion.

\begin{remark}\label{rem:generic_qhankel_completion}
If the points $\vect{z}_1, \ldots, \vect{z}_r$ are generic, due to Remark~\ref{rem:A_independence_generic}, we can replace the $\set{B}$-independence and $\trgset{\nvar}{d'-1}$-independence by the bounds $r \le \binom{d'+m}{m}$ and $r \le \binom{d'+m-1}{m}$ respectively.
\end{remark}

Finally, we note that for generic cases, the rank bound in Remark~\ref{rem:generic_qhankel_completion} can be further improved by using results of \cite[Theorem~{2.4}]{Oeding.Ottaviani13JoSC-Eigenvectors} (since uniqueness of matrix completion is related to uniqueness of tensor decomposition).
For our purposes, however, Proposition~\ref{prop:flat_ext_exp_array} will be sufficient.

%\subsection{Quasi-Hankel matrices and symmetric tensors}\label{sec:qh_sym_tensors}
%The arrays of the form \eqref{eq:exp_array} are related to symmetric tensors.
%Indeed, if $\calA = \trgset{\nvar}{d}$ then to any array $\iarr{h} = \{{h}_{\bm{\alpha}}\}_{\bm{\alpha} \in \calA}$ we can put into one-to-one correspondence the following homogeneous polynomial
%\begin{equation}\label{eq:hompoly}
%H(\vect{y}) = 
%\sum\limits_{\vect{\beta} \in \degset{\nvar+1}{d}} {h}_{(\beta_2,\ldots,\beta_{m+1})} \binom{|\vect{\beta}|}{\vect{\beta}} \,
%\bfy^{\vect\beta}
%\end{equation}
%where $\vect{y} \eqdef \bmx y_0 & \ldots & y_{\nvar} \emx^{\T}$,  and $\binom{|\vect{\beta}|}{\vect{\beta}}$ is the multinomial coefficient.
%Then is is easy to see that $\iarr{h}$ has the canonical representation
%\eqref{eq:exp_array} if and only if the polynomial $H(\vect{y})$ has the  decomposition as a sum of powers of linear forms
%\[
%H(\bfy) = \sum\limits_{k=1}^r c_k (y_0 + \bmx y_1 & \ldots & y_{\nvar} \emx \bfz_r)^{d},
%\]
%which is also called rank-$r$ Waring decomposition .
%Finally, the Waring decomposition is equivalent to symmetric tensor decomposition (see \cite{Comon.etal08SJMAA-Symmetric} for more details).
%Next in \cite{Brachat.etal10LAaiA-Symmetric}, the computation of the Waring decomposition can need the completion of a low-rank quasi-Hankel  matrix.

% !TEX root = mat_completion.tex
\section{Optimality conditions of nuclear norm minimization}\label{sec:nnmin_genprop}
\subsection{Optimality conditions in convex optimization}\label{sec:cvx_opt_cond}
We recall some definitions from the field of convex optimization \cite[Chapter D]{Hiriart-Urruty.Lemarechal01-Fundamentals}.
For a convex (possibly non-differentiable) function $f: \bbR^{N} \to \bbR$, the subdifferential \cite[Def. 1.2.1]{Hiriart-Urruty.Lemarechal01-Fundamentals}  of $f$ is defined as the set $\partial f(\vect{x}) \subset \bbR^{N}$
\[
\partial f(\vect{x}) \eqdef \{\vect{d} : f({\vect{y}}) - f(\vect{x}) \ge \langle \vect{d}, \vect{y}-\vect{x}\rangle\quad \forall \vect{y} \in \bbR^{N} \}.
\]
In particular, if $f$ is differentiable at a point $\vect{x}$, then the subgradient has only one element: the usual gradient, i.e., $\partial f(\vect{x}) = \{ \nabla f(\vect{x})\}$.

For the unconstrained  convex optimization problem
\begin{equation}\label{eq:cvx_opt}
\min_{\vect{x} \in \bbR^{N}}  f(\vect{x}),
\end{equation}
we recall the first-order optimality condition, which is necessary and sufficient in this case.
\begin{lemma}[First-order optimality, {\cite[Thm. 2.2.1]{Hiriart-Urruty.Lemarechal01-Fundamentals}}]\label{lem:cvx_opt_first_order}
A  point $\vect{x}$ is a minimum point of \eqref{eq:cvx_opt} if and only if 
\begin{equation}\label{eq:first_order_condition}
\vect{0} \in  \partial f(\vect{x}).
\end{equation}
\end{lemma}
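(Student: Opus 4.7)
The plan is to prove this by unfolding the definition of the subdifferential, since the equivalence is essentially a direct rewriting of that definition. Concretely, I will just chase the quantifiers.

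For the ``$\Leftarrow$'' direction, I will assume $\vect{0} \in \partial f(\vect{x})$, specialize the defining inequality of $\partial f(\vect{x})$ to the subgradient $\vect{d} = \vect{0}$, and read off
\[
f(\vect{y}) - f(\vect{x}) \ge \langle \vect{0}, \vect{y} - \vect{x}\rangle = 0 \quad \text{for all } \vect{y}\in\bbR^{N},
\]
which says precisely that $\vect{x}$ is a global minimizer of $f$. Since \eqref{eq:cvx_opt} is unconstrained, being a global minimizer is the same as being a minimum point.

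For the ``$\Rightarrow$'' direction, I will assume $\vect{x}$ is a minimum point, so $f(\vect{y}) \ge f(\vect{x})$ for every $\vect{y}$. Rewriting this as $f(\vect{y}) - f(\vect{x}) \ge 0 = \langle \vect{0}, \vect{y} - \vect{x}\rangle$ shows that $\vect{d} = \vect{0}$ satisfies the subgradient inequality at $\vect{x}$, and therefore $\vect{0} \in \partial f(\vect{x})$ by the definition given right before the lemma statement.

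There is no real obstacle here: the result is tautological once one writes down the definition of $\partial f$. The only mild subtlety worth flagging is that this argument uses only the subgradient inequality and not the convexity of $f$ in any deeper way; convexity is what ensures that $\partial f$ is nonempty at interesting points, but the equivalence ``$\vect{0}\in\partial f(\vect{x}) \Leftrightarrow \vect{x}$ minimizes $f$'' holds for any function admitting a subgradient in this sense. I will therefore present this as a short two-line proof and cite \cite[Thm.~2.2.1]{Hiriart-Urruty.Lemarechal01-Fundamentals} for readers who want the statement in its standard convex-analytic context.
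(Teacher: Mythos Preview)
Your proof is correct: the equivalence is indeed an immediate unfolding of the definition of $\partial f(\vect{x})$, and both directions are exactly as you wrote. Note that the paper does not give its own proof of this lemma at all---it simply cites \cite[Thm.~2.2.1]{Hiriart-Urruty.Lemarechal01-Fundamentals}---so there is nothing to compare against; your two-line argument is the standard one and would be a perfectly acceptable addition.
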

In particular, for differentiable functions, the condition of Lemma~\ref{lem:cvx_opt_first_order} reduces to $\nabla f(\vect{x}) = \vect{0}$.
Next, we use a simple uniqueness condition for the minimizer.
\begin{lemma}[Sufficient condition of uniqueness]\label{lem:cvx_opt_uniqueness}
A  point $\vect{x}$ is the unique minimizer of \eqref{eq:cvx_opt} if 
\begin{equation}\label{eq:uniqueness_convex}
\vect{0} \in \inter{\partial f(\vect{x})},
\end{equation}
where $\inter{\cdot}$ denotes the interior of a set.
\end{lemma}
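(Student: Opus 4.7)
The plan is to exploit the definition of the subdifferential quantitatively: having $\vect{0}$ in the \emph{interior} of $\partial f(\vect{x})$ gives us not just one but a whole ball of subgradients at $\vect{x}$, which can be used to produce a strict lower bound on $f(\vect{y})-f(\vect{x})$ in every direction.

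First I would invoke the hypothesis $\vect{0} \in \inter{\partial f(\vect{x})}$ to fix some $\varepsilon > 0$ such that the closed ball $\{\vect{d} \in \bbR^{N} : \|\vect{d}\| \le \varepsilon\}$ is contained in $\partial f(\vect{x})$. By the defining inequality of the subdifferential, this means
\[
f(\vect{y}) - f(\vect{x}) \ge \langle \vect{d}, \vect{y} - \vect{x}\rangle
\qquad \forall \vect{y} \in \bbR^{N},\ \forall \vect{d} \text{ with } \|\vect{d}\| \le \varepsilon.
\]
Given an arbitrary $\vect{y} \neq \vect{x}$, the trick is to choose the subgradient that maximizes the right-hand side, namely
$\vect{d} = \varepsilon (\vect{y}-\vect{x})/\|\vect{y}-\vect{x}\|$, which is admissible since $\|\vect{d}\| = \varepsilon$.

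Substituting this choice yields $f(\vect{y}) - f(\vect{x}) \ge \varepsilon \|\vect{y}-\vect{x}\| > 0$ for every $\vect{y} \neq \vect{x}$, so $\vect{x}$ is the unique minimizer. I do not anticipate any real obstacle: the only subtle point is to be sure that all directions are covered, which is exactly the content of ``interior'' (as opposed to merely $\vect{0} \in \partial f(\vect{x})$, which via Lemma~\ref{lem:cvx_opt_first_order} gives optimality but not uniqueness). Note also that the same argument establishes a strong-growth / sharp-minimum property $f(\vect{y}) \ge f(\vect{x}) + \varepsilon \|\vect{y}-\vect{x}\|$, which is strictly stronger than uniqueness and will likely be useful later when verifying the nuclear-norm optimality conditions in a robust (noise-tolerant) form.
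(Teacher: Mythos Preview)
Your argument is correct and follows essentially the same idea as the paper's proof: both pick a subgradient pointing in the direction $\vect{y}-\vect{x}$ and apply the subdifferential inequality to obtain $f(\vect{y})-f(\vect{x})>0$. The only cosmetic difference is that you fix a uniform ball radius $\varepsilon$ and normalize, obtaining the sharp-minimum bound $f(\vect{y})-f(\vect{x})\ge\varepsilon\|\vect{y}-\vect{x}\|$, whereas the paper takes $\vect{d}=\delta(\vect{y}-\vect{x})$ with a $\vect{y}$-dependent $\delta>0$ and gets $f(\vect{y})-f(\vect{x})\ge\delta\|\vect{y}-\vect{x}\|_2^2$.
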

\begin{proof}
In this case for any $\vect{y}\neq \vect{x}$ there exists $\delta > 0$ such that $\delta\cdot(\vect{y}-\vect{x}) \in \partial f(\vect{x})$.
By definition of $\partial f(\vect{x})$, we have that $f(\vect{y}) -f(\vect{x}) \ge \delta \|\vect{y}-\vect{x}\|^2_2 > 0$.
\end{proof}

\subsection{Subdifferential of the nuclear norm}
First, we recall the form of the subdifferential of the nuclear norm of a matrix. 
Let $\matr{X} \in \bbR^{n_1 \times n_2}$ be a matrix of rank $r$, and let $\matr{X} = \matr{U} \matr{\Sigma} \matr{V}^{\T}$ be an SVD of $\matr{X}$, 
where $\matr{U} \in \bbR^{n_1 \times r}$, $\matr{V} \in \bbR^{n_2\times r}$ and $\matr{\Sigma} \in \bbR^{r\times r}$ is a diagonal matrix of nonzero singular values. 
Next, let $\matr{U}_{\bot} \in \bbR^{n_1 \times (n_1-r)}$, $\matr{V}_{\bot} \in \bbR^{n_2\times (n_2-r)}$ be the bases of the left and right nullspaces of $\matr{X}$, respectively.
Then, according to \cite[p. 41]{Watson92LAaiA-Characterization}, the subdifferential of the nuclear norm at $\matr{X}$ is equal to $\partial\|\matr{X}\|_{*} = $
\[
\{\matr{B} + \matr{U}_{\bot} \matr{W} \matr{V}^{\top}_{\bot}: \matr{W} \in \bbR^{(n_1-r) \times (n_2-r)}\!, \|\matr{W}\|_2 \le 1\},
\]
where $\matr{B}$ is defined as
\begin{equation}\label{eq:skew_projector}
\matr{B} \eqdef \matr{U} \matr{V}^{\T},
\end{equation}
and $\|\cdot\|_2$ is the spectral norm (largest singular value).

\begin{remark}\label{rem:B_uniqueness}
When the SVD of $\matr{X}$ is not unique, then the matrix $\matr{B}$ from \eqref{eq:skew_projector} is still defined uniquely. In fact, if $\matr{X}$ has multiple singular values, its matrix of left (resp. right) singular vectors is of the form $\matr{U}\matr{\Theta}$ (resp. $\matr{V}\matr{\Theta}$), where $\matr{\Theta}$ is an orthonormal matrix which commutes with $\matr{\Sigma}$. 
\end{remark}
Next we consider a real nuclear norm minimization problem
\begin{equation}\label{eq:nnmin_real}
\widehat{\vect{p}} = \arg \min_{\vect{p} \in \bbR^{N}} f(\vect{p}), \quad f(\vect{p}) \eqdef \| \struct{\vect{p}} \|_*,
\end{equation}
where $\scrS$ is defined in \eqref{eq:affine_structure}. By an analogue of chain rule \cite[Thm. 4.2.1]{Hiriart-Urruty.Lemarechal01-Fundamentals}, we immediately have that
\begin{eqnarray}\label{eq:subgradient_Sp}
\partial f (\vect{p}) &=& 
\Big\{
\bmx \langle \matr{S}_1, \matr{H}\rangle_F & \cdots & \langle \matr{S}_N, \matr{H}\rangle_F \emx^{\T} : \nonumber\\
&& \phantom{\Big\{}\matr{H} \in \partial\|\matr{X}\|_{*}, \matr{X} = \struct{\vect{p}}  \Big\}, 
\end{eqnarray}
where $\langle\cdot,\cdot\rangle_F$ denotes the Frobenius inner product.
In the following section, we will give a more compact form or subdifferentials and optimality conditions.

\subsection{Real nuclear norm minimization}
First, we assume that $\matr{X}$ is symmetric and $n_1 = n_2 = n$, then we have that $\matr{U}_{\bot}\matr{U}_{\bot}^{\top} = \matr{V}_{\bot}\matr{V}_{\bot}^{\top} = \matr{Q}$, where $\matr{Q}$ is the projector on the nullspace of $\matr{X}$. Then the subdifferential can be rewritten as
\begin{equation}\label{eq:subgradient_nnorm}
\partial\|\matr{X}\|_{*} = \{ \matr{B} + \matr{Q}\matr{M}\matr{Q}: \|\matr{M}\|_2 \le 1\},
\end{equation}
where $\matr{B}$ is defined as in \eqref{eq:skew_projector}. We note that $\matr{Q} = \matr{I} -\matr{P}$, where $\matr{P}$ is the orthogonal projector on the column space of $\matr{X}$, which can be obtained as
\begin{equation}\label{eq:projector}
\matr{P} \eqdef \matr{U}\matr{U}^{\T} =\matr{B} \matr{B}^{\T}.
\end{equation}
Next, we define the matrix 
\begin{equation}\label{eq:bfG}
\bfS \eqdef \bmx \mvec(\matr{S}_1) & \ldots &  \mvec(\matr{S}_{N})\emx,
\end{equation}
and for a matrix $\matr{P} \in \bbR^{n\times n}$ we define
\begin{equation}\label{eq:fo_condition_matrix_left}
\mathscr{A}(\matr{P}) \eqdef \bfS^{\T} ((\matr{I} - \matr{P}) \otimes (\matr{I} - \matr{P})) \in \bbR^{N \times n^2},
\end{equation}
where $\otimes$ denotes the Kronecker product.
Then, by \eqref{eq:subgradient_Sp}, the subgradient of $f$  can be rewritten  as $\partial f (\vect{p})=$
\begin{equation}\label{eq:subgradient_Sp_matrix}
\left\{\bfS^{\T}\! \mvec(\matr{B}) + \mathscr{A}(\matr{P}) \mvec (\matr{M})\!:
 \|\matr{M}\|_2 \le 1 \right\}.
\end{equation}
with $\matr{M}\!\in\bbR^{n\times n}$.
Using \eqref{eq:subgradient_Sp_matrix}, we can rewrite Lemma~\ref{lem:cvx_opt_first_order} and Lemma~\ref{lem:cvx_opt_uniqueness} as follows.
\begin{lemma}[First-order  optimality conditions for real nuclear norm minimization]\label{lem:opt_first_order_nnmin}
A point $\vect{p}^{*}$ is a minimizer of \eqref{eq:nnmin_real} if and only if there exists a matrix $\matr{M} \in \bbR^{n\times n}$ with $\|\matr{M}\|_2 \le 1$ such that
\begin{equation}\label{eq:fo_condition_matrix}
\mathscr{A}(\matr{P}) \mvec (\matr{M}) = -\bfS^{\T} \mvec(\matr{B}).
\end{equation}
where $\matr{B}$ is as in \eqref{eq:skew_projector},
 $\matr{P}$ is as in \eqref{eq:projector}, and $\matr{Q} \eqdef \matr{I}- \matr{P}$.
\end{lemma}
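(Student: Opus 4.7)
The plan is to combine Lemma~\ref{lem:cvx_opt_first_order} with the vectorized description of the subgradient that has already been assembled in \eqref{eq:subgradient_Sp}--\eqref{eq:subgradient_Sp_matrix}; the lemma is essentially a repackaging of that description once one sets the subgradient to zero.

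First, I would note that $f(\vect{p}) = \|\struct{\vect{p}}\|_*$ is the composition of the convex, finite-valued nuclear norm with the real affine map $\vect{p}\mapsto \struct{\vect{p}}$, so $f$ is convex on all of $\bbR^N$. Hence Lemma~\ref{lem:cvx_opt_first_order} applies directly and gives that $\vect{p}^*$ is a minimizer of \eqref{eq:nnmin_real} if and only if $\vect{0}\in \partial f(\vect{p}^*)$.

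Next, I would unwind $\partial f(\vect{p}^*)$ using the symmetric matrix $\matr{X}\eqdef\struct{\vect{p}^*}$. By \eqref{eq:subgradient_nnorm}, every $\matr{H}\in\partial\|\matr{X}\|_*$ has the form $\matr{B} + \matr{Q}\matr{M}\matr{Q}$ for some $\matr{M}\in\bbR^{n\times n}$ with $\|\matr{M}\|_2\le 1$. Using the identity $\mvec(\matr{Q}\matr{M}\matr{Q}) = (\matr{Q}\otimes\matr{Q})\mvec(\matr{M})$ (which relies on $\matr{Q}^\T=\matr{Q}$), each component of the chain-rule subgradient \eqref{eq:subgradient_Sp} becomes
\[
\langle \matr{S}_k, \matr{B}+\matr{Q}\matr{M}\matr{Q}\rangle_F
 = \mvec(\matr{S}_k)^\T\mvec(\matr{B}) + \mvec(\matr{S}_k)^\T(\matr{Q}\otimes\matr{Q})\mvec(\matr{M}).
\]
Stacking these identities for $k=1,\dots,N$ and invoking the definitions of $\bfS$ in \eqref{eq:bfG} and $\mathscr{A}(\matr{P})$ in \eqref{eq:fo_condition_matrix_left} (note $\matr{I}-\matr{P}=\matr{Q}$) gives precisely the parametrization \eqref{eq:subgradient_Sp_matrix}.

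Finally, the condition $\vect{0}\in\partial f(\vect{p}^*)$ says exactly that there exists an $\matr{M}$ with $\|\matr{M}\|_2\le 1$ for which $\bfS^\T\mvec(\matr{B}) + \mathscr{A}(\matr{P})\mvec(\matr{M}) = \vect{0}$, and rearranging yields \eqref{eq:fo_condition_matrix}. There is no real obstacle here: the heavy lifting (the Watson characterization \eqref{eq:subgradient_nnorm} and the convex chain rule yielding \eqref{eq:subgradient_Sp}) has already been recorded, and the only small point to check is that the symmetry of $\matr{Q}$ allows the vectorized left action of $\matr{Q}\otimes\matr{Q}$ on $\mvec(\matr{M})$ to be absorbed into $\mathscr{A}(\matr{P})$ as defined in \eqref{eq:fo_condition_matrix_left}.
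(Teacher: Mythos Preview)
Your argument is correct and follows exactly the route taken in the paper: the paper's proof simply says ``Follows from \eqref{eq:subgradient_Sp_matrix} and Lemma~\ref{lem:cvx_opt_first_order},'' and you have spelled out precisely why those two ingredients yield the claim. The only extra content you add is the explicit verification of how $\matr{Q}\otimes\matr{Q}$ matches the definition of $\mathscr{A}(\matr{P})$, which is a helpful sanity check but not a departure from the paper's approach.
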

\begin{proof}
Follows from \eqref{eq:subgradient_Sp_matrix} and Lemma~\ref{lem:cvx_opt_first_order}.
\end{proof}
\begin{lemma}[Sufficient condition of uniqueness]\label{lem:unique_minimizer_nnmin}
A point $\vect{p}$ is the unique minimizer of \eqref{eq:nnmin} if there exists $\matr{M} \in \bbR^{n\times n}$ with $\|\matr{M}\|_2 < 1$ such that \eqref{eq:fo_condition_matrix} holds and the matrix $\mathscr{A}(\matr{P})$ is full row rank (\ie, is of rank $N$).
\end{lemma}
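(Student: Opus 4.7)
The plan is to invoke Lemma~\ref{lem:cvx_opt_uniqueness}: since the real nuclear-norm problem \eqref{eq:nnmin_real} is unconstrained and convex, it suffices to establish $\vect{0} \in \inter{\partial f(\vect{p})}$, i.e.\ that some open ball around $\vect{0}$ in $\bbR^N$ is contained in $\partial f(\vect{p})$. Using the explicit description \eqref{eq:subgradient_Sp_matrix}, this reduces to the following: for every sufficiently small $\vect{\varepsilon} \in \bbR^N$, exhibit $\matr{M}' \in \bbR^{n\times n}$ with $\|\matr{M}'\|_2 \le 1$ such that $\bfS^{\T}\mvec(\matr{B}) + \mathscr{A}(\matr{P})\mvec(\matr{M}') = \vect{\varepsilon}$. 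Writing $\matr{M}' = \matr{M} + \matr{M}''$ and subtracting the standing hypothesis \eqref{eq:fo_condition_matrix}, the task becomes: find $\matr{M}''$ with $\mathscr{A}(\matr{P})\mvec(\matr{M}'') = \vect{\varepsilon}$ and $\|\matr{M}+\matr{M}''\|_2 \le 1$.

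The second hypothesis, that $\mathscr{A}(\matr{P}) \in \bbR^{N\times n^2}$ has full row rank, is precisely what produces such an $\matr{M}''$. I would take $\mvec(\matr{M}'') = \mathscr{A}(\matr{P})^{\dagger}\vect{\varepsilon}$, where $\mathscr{A}(\matr{P})^{\dagger}$ is the Moore--Penrose pseudoinverse; this is a bounded right inverse, giving $\|\mvec(\matr{M}'')\|_2 \le \sigma^{-1}\|\vect{\varepsilon}\|_2$ with $\sigma > 0$ the smallest nonzero singular value of $\mathscr{A}(\matr{P})$. The standard inequality $\|\matr{M}''\|_2 \le \|\matr{M}''\|_F = \|\mvec(\matr{M}'')\|_2$ translates this bound into the spectral norm, and the triangle inequality yields
\[
\|\matr{M}'\|_2 \;\le\; \|\matr{M}\|_2 + \|\matr{M}''\|_2 \;\le\; \|\matr{M}\|_2 + \sigma^{-1}\|\vect{\varepsilon}\|_2.
\]
Setting $\rho \eqdef \sigma(1-\|\matr{M}\|_2)$, which is strictly positive exactly because the hypothesis states $\|\matr{M}\|_2 < 1$, any $\vect{\varepsilon}$ with $\|\vect{\varepsilon}\|_2 < \rho$ gives $\|\matr{M}'\|_2 < 1$ and therefore $\vect{\varepsilon} \in \partial f(\vect{p})$. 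An open ball of radius $\rho$ around $\vect{0}$ thus lies in $\partial f(\vect{p})$, and Lemma~\ref{lem:cvx_opt_uniqueness} closes the argument.

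I do not anticipate any genuine obstacle: once the subdifferential is parametrized as in \eqref{eq:subgradient_Sp_matrix}, the entire argument reduces to a bounded-perturbation calculation in linear algebra. The two small points worth highlighting are the spectral-versus-Frobenius switch (handled cleanly by $\|\cdot\|_2 \le \|\cdot\|_F$ at the cost of only a constant factor in $\rho$), and the fact that the strict inequality $\|\matr{M}\|_2 < 1$ in the hypothesis is essential: without a positive gap the perturbation $\matr{M}''$ would have no room inside the unit spectral-norm ball and the interior argument collapses.
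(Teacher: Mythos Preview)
Your proof is correct and follows exactly the approach the paper intends: the paper's own proof is the single line ``Follows from \eqref{eq:subgradient_Sp_matrix} and Lemma~\ref{lem:cvx_opt_uniqueness},'' and your argument is a careful, correct expansion of why the two hypotheses (strict inequality $\|\matr{M}\|_2<1$ and full row rank of $\mathscr{A}(\matr{P})$) force $\vect{0}\in\inter{\partial f(\vect{p})}$.
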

\begin{proof}
Follows from \eqref{eq:subgradient_Sp_matrix} and Lemma~\ref{lem:cvx_opt_uniqueness}.
\end{proof}

% !TEX root = mat_completion.tex
\subsection{Complex nuclear norm minimization}
Now let $\struct{\vect{p}}$  be the matrix structure given in \eqref{eq:affine_structure}, but now $\matr{S}_0 \in \bbC^{n \times n}$, $\matr{S}_k \in \bbR^{n\times n}$ for  $k \ge 1$, and the parameter is complex $\vect{p}_{\bbC} \in \bbC^{N}$, \ie,
\eqref{eq:affine_structure} defines a complex-valued map $\scrS: \bbC^{N} \to \bbC^{n \times n}$.

In order to derive the optimality conditions for the complex-valued case, we construct an extended real-valued structure $\scrS_{ext}: \bbR^{2N} \to \bbR^{2n\times 2n}$ as follows.
For $\vect{p}_{ext} = \bmx \vect{p}_{\calR} \\ \vect{p}_{\calI} \emx$, where $\vect{p}_{\calR},\vect{p}_{\calI} \in \bbR^{N}$, we define an equivalent (non-symmetric) matrix structure:
\begin{equation}\label{eq:affine_ext0}
\scrS_{ext}(\vect{p}_{ext}) \eqdef 
\bmx \struct{\vect{p}_{\calR}} & -\struct{\vect{p}_{\calI}} \\ \struct{\vect{p}_{\calI}} &  \struct{\vect{p}_{\calR}} \emx
\end{equation}
From \cite{Day.Heroux01SJoSC-Solving}, the singular values of $\scrS_{ext}(\vect{p}_{ext})$ contain two copies of singular values of $\struct{\vect{p}_{\bbC}}$. Hence,
\[
\rank{\scrS_{ext}(\vect{p}_{ext})} = 2 \rank{\struct{\vect{p}_{\bbC}}},
\]
and the complex rank minimization problem \eqref{eq:rankmin}  is equivalent to real rank minimization 
\begin{equation}\label{eq:rankmin_ext}
\widetilde{\vect{p}}_{ext} =  \arg \min_{\vect{p}_{ext} \in \bbR^{2N}} \rrank \scrS_{ext}(\vect{p}_{ext}).
\end{equation}
Moreover, the problem \eqref{eq:nnmin}, is equivalent to the real nuclear norm minimization problem
\begin{equation}\label{eq:nnmin_ext}
\widehat{\vect{p}}_{ext} =  \arg \min_{\vect{p}_{ext} \in \bbR^{2N}} \| \scrS_{ext}(\vect{p}_{ext})\|_*.
\end{equation}
The following proposition shows that we can rewrite the optimality conditions for the problem \eqref{eq:nnmin_ext} in a convenient form, as in  Lemmae~\ref{lem:opt_first_order_nnmin} and \ref{lem:unique_minimizer_nnmin}.% still hold for the complex-valued problem.

\begin{proposition}[Complex optimality conditions]\label{prop:opt_sufficient_complex}
Let $\vect{p}_{\bbC} = \vect{p}_{\calR} + i \vect{p}_{\calI} \in \bbC^N$, $\struct{\vect{p}_{\bbC}} = \matr{U}_{\bbC} \Sigma \matr{V}_{\bbC}^{\H}$ be an SVD of the symmetric matrix $\struct{\vect{p}_{\bbC}}$, and define $\matr{B}_{\bbC} := \matr{U}_{\bbC}^{} \matr{V}_{\bbC}^{\H}$, $\matr{P}_{\bbC} = \matr{U}_{\bbC}^{} \matr{U}_{\bbC}^{\H}$, and $\matr{Q}_{\bbC} = \matr{I}_n - \matr{P}_{\bbC}$.
Then it holds that:

\begin{enumeratex}
\noindent\item The point  $\vect{p}_{\bbC}$ is a minimizer of \eqref{eq:nnmin_ext} if and only if there exists $\matr{M} \in \bbC^{n \times n}$ with $\|\matr{M}\|_2 \le 1$ that satisfies \eqref{eq:fo_condition_matrix}.

\noindent\item
The point $\vect{p}_{ext}$ is the unique minimizer of \eqref{eq:nnmin_ext} if there exists $\matr{M} \in \bbC^{n \times n}$ with $\|\matr{M}\|_2 < 1$ that satisfies \eqref{eq:fo_condition_matrix}, and $\mathscr{A}(\matr{P})$ is full row rank.
\end{enumeratex}
\end{proposition}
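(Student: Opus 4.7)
The plan is to apply Lemmas~\ref{lem:opt_first_order_nnmin} and~\ref{lem:unique_minimizer_nnmin} to the equivalent real extended problem~\eqref{eq:nnmin_ext} and translate the resulting conditions back to complex form via the standard embedding $J: \bbC^{n \times n} \to \bbR^{2n \times 2n}$ defined by $J(A+iB) = \bsm A & -B \\ B & A \esm$. This map is an $\bbR$-algebra homomorphism satisfying $J(X)^{\T} = J(X^{\H})$; it preserves orthonormality, preserves the spectral norm while doubling the nuclear norm, and a real matrix $\matr{Y}$ lies in its image iff it commutes with $\matr{K} \eqdef J(i\matr{I}_n) = \bsm 0 & -\matr{I}_n \\ \matr{I}_n & 0 \esm$.

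Starting from any SVD $\struct{\vect{p}_{\bbC}} = \matr{U}_{\bbC}\matr{\Sigma}\matr{V}_{\bbC}^{\H}$, applying $J$ and reordering yields an SVD of $\scrS_{ext}(\vect{p}_{ext}) = J(\struct{\vect{p}_{\bbC}})$; by Remark~\ref{rem:B_uniqueness} the associated subdifferential quantities are unique and equal to $\matr{B}_{ext} = J(\matr{B}_{\bbC})$, $\matr{P}_{ext} = J(\matr{P}_{\bbC})$, $\matr{Q}_{ext} = J(\matr{Q}_{\bbC})$. The $2N$ basis matrices of $\scrS_{ext}$ are $J(\matr{S}_k)$ and $J(i\matr{S}_k)$, and a direct computation shows that the two real components of the extended first-order equation corresponding to index $k$ are precisely twice the real and imaginary parts of the single complex equation $\langle \matr{S}_k,\, \matr{B}_{\bbC} + \matr{Q}_{\bbC} \matr{M}_{\bbC} \matr{Q}_{\bbC}\rangle_F = 0$, which is the $k$-th row of the complex reading of~\eqref{eq:fo_condition_matrix}. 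The ``if'' direction of (i) then follows by setting $\matr{M}_{ext} \eqdef J(\matr{M}_{\bbC})$ and using $\|J(\matr{M}_{\bbC})\|_2 = \|\matr{M}_{\bbC}\|_2$.

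The main obstacle is the converse of (i): producing a complex certificate from a real one. I would use $\matr{K}$-symmetrization: given a real $\matr{M}_{ext}$ with $\|\matr{M}_{ext}\|_2 \le 1$ satisfying the extended equation, the average $\matr{M}_{ext}' \eqdef \tfrac{1}{2}(\matr{M}_{ext} + \matr{K}\matr{M}_{ext}\matr{K}^{-1})$ commutes with $\matr{K}$ and thus equals $J(\matr{M}_{\bbC})$ for some $\matr{M}_{\bbC} \in \bbC^{n \times n}$; orthogonality of $\matr{K}$ gives $\|\matr{M}_{\bbC}\|_2 = \|\matr{M}_{ext}'\|_2 \le \|\matr{M}_{ext}\|_2$. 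This averaged matrix still satisfies the equation because $\matr{Q}_{ext}$ and the basis matrices $J(\matr{S}_k)$, $J(i\matr{S}_k)$ all commute with $\matr{K}$, so the trace pairings that define $\bfS_{ext}^{\T}\mvec(\cdot)$ are invariant under conjugation by $\matr{K}$.

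For (ii), the same symmetry principle shows that $\mathscr{A}_{ext}(\matr{P}_{ext})$ annihilates the $\matr{K}$-anticommuting subspace (since $\trace(\matr{X}\matr{Y}) = 0$ whenever $\matr{X}$ commutes and $\matr{Y}$ anticommutes with $\matr{K}$), hence it factors through the $J$-symmetric subspace, on which the identification $J(\matr{M}_{\bbC}) \leftrightarrow \matr{M}_{\bbC}$ makes it coincide with $\mathscr{A}(\matr{P}_{\bbC})$ viewed as a $\bbC$-linear map $\bbC^{n^2} \to \bbC^N$. Full real row rank $2N$ of $\mathscr{A}_{ext}(\matr{P}_{ext})$ is therefore equivalent to full complex row rank $N$ of $\mathscr{A}(\matr{P}_{\bbC})$, so combining this with the strict inequality $\|\matr{M}\|_2 < 1$ and Lemma~\ref{lem:unique_minimizer_nnmin} applied to the extended problem yields (ii).
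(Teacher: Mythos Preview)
Your proposal is correct and follows essentially the same route as the paper: both reduce to the real extended structure $\scrS_{ext}$, identify $\matr{B}_{ext},\matr{P}_{ext},\matr{Q}_{ext}$ with the images of $\matr{B}_{\bbC},\matr{P}_{\bbC},\matr{Q}_{\bbC}$ under the block embedding, and obtain the complex certificate from a real one by the averaging $\tfrac12(\matr{M}_{ext}+\matr{K}\matr{M}_{ext}\matr{K}^{-1})$ together with the norm inequality $\|\matr{M}_{\bbC}\|_2\le\|\matr{M}_{ext}\|_2$. The only difference is packaging: the paper carries out the block computations explicitly, whereas you phrase the same steps via the algebra homomorphism $J$ and commutation with $\matr{K}$; your treatment of part~(ii) via the vanishing of $\mathscr{A}_{ext}$ on the $\matr{K}$-anticommuting subspace is in fact a slightly cleaner justification of the rank equivalence than the paper's terse argument.
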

\proofapp

\begin{remark}\label{rem:equivalent_conditions_slrmc_DP15}
Lemmae~\ref{lem:opt_first_order_nnmin},~\ref{lem:unique_minimizer_nnmin}, and Proposition~\ref{prop:opt_sufficient_complex} are related to  optimality conditions used in \cite{Dai.Pelckmans15A-nuclear}. Indeed, the rank condition on $\rank{\mathscr{A}(\matr{P})} = N$ is equivalent to $\matr{Q}\matr{H}\matr{Q}^{\T} \neq 0$ for all $\matr{H}$ of the form 
\[
\forall \matr{H} =  \sum\limits_{k=1}^{N}\Delta p_k \matr{S}_k , \quad \Delta\vect{p} \in \bbC^{N} \setminus \{\vect{0}\},
\]
which corresponds to \cite[Proposition 2]{Dai.Pelckmans15A-nuclear}.
Next, the condition \eqref{eq:fo_condition_matrix} can be rewritten as
\[
\langle \matr{S}_k, \matr{B}+\matr{Q}\matr{M}\matr{Q}^{\T}\rangle_F = \matr{0},\quad \forall k\in\{1,\ldots,N\},
\]
which corresponds to the condition in \cite[Lemma 2]{Dai.Pelckmans15A-nuclear}.
The authors of \cite{Dai.Pelckmans15A-nuclear}, however, do not make explicit connection with general optimality conditions for convex optimization problems, presented in Section~\ref{sec:cvx_opt_cond}.
\end{remark}

% !TEX root = mat_completion.tex
\section{Main lemmas: optimality conditions and simple projectors}\label{sec:lemmas}

\subsection{Quasi-Hankel completion: basis matrices}\label{sec:basis_matrices}
First, we put the rank minimization problem  \eqref{eq:qhankel_rankmin} in the (standard) form \eqref{eq:rankmin}.
In order to do this, we explicitly write down the matrices $\matr{S}_k$ in  \eqref{eq:affine_structure}.
The constant part has the form
\[
\matr{S}_0 \eqdef \hmatr{\set{A}}{\arr{c}}, 
\]
where $\arr{c} = \{c_{\vect{\alpha}}\}_{\vect{\alpha} \in 2 \set{A}}$ is defined as
\[
{c}_{\vect{\alpha}} \eqdef 
\begin{cases}
{h}_{\vect{\alpha}}, & {\vect{\alpha}} \in \set{A},  \\
0, & {\vect{\alpha}} \in 2\set{A} \setminus \set{A}.
\end{cases}
\]

\begin{example}
In the case of $\set{A} = \trgset{2}{3}$ (depicted in \eqref{eq:qhankel_trgset}), we have that $\matr{S}_0 = \hmatr{\set{A}}{\arr{c}} = $
\small\[
\left[
\begin{array}{c|c@{\;}c@{\;}|c@{\;}c@{\;}c@{\;}|c@{\;}c@{\;}c@{\;}c}
h_{0,0} & h_{1,0} & h_{0,1} &  h_{2,0} & h_{1,1} & h_{0,2} & h_{3,0} & h_{2,1} & h_{1,2} & h_{0,3} \\
\hline
h_{1,0} & h_{2,0} & h_{1,1} &  h_{3,0} & h_{2,1} & h_{1,2} & 0         & 0         & 0         & 0         \\
h_{0,1} & h_{1,1} & h_{0,2} &  h_{2,1} & h_{1,2} & h_{0,3} & 0         & 0         & 0         & 0 \\
\hline
h_{2,0} & h_{3,0} & h_{2,1} &  0        & 0 & 0 & 0 & 0 & 0 & 0 \\
h_{1,1} & h_{2,1} & h_{1,2} &  0        & 0 & 0 & 0 & 0 & 0 & 0 \\
h_{0,2} & h_{1,2} & h_{0,3} &  0 & 0 & 0 &0 & 0 & 0 &0 \\
\hline
h_{3,0} & 0         & 0         &  0 & 0 & 0 & 0 & 0 & 0 & 0 \\
h_{2,1} & 0         & 0         &  0 & 0 & 0 & 0 & 0 & 0 & 0 \\
h_{1,2} & 0         & 0         &  0 & 0 & 0 & 0 & 0 & 0 & 0 \\
h_{0,3} & 0         & 0         &  0 & 0 & 0 & 0 & 0 & 0 & 0
\end{array}
\right].
\]\normalsize
In the case of Hankel matrices ($\set{A} = \trgset{1}{d}$, see \eqref{eq:hankel}), we have that $n = d+1$, and
$\matr{S}_0$ is given by
\begin{equation}\label{eq:hankel_G0}
\matr{S}_0 = \bmx
h_0     & h_1     & \cdots & h_{d-1} & h_{d} \\
h_1     & h_2     & \iddots & h_{d} & 0     \\
\vdots  & \iddots  & \iddots & \iddots  & \vdots  \\
h_{d-1} & h_{d} & \iddots &  & 0 \\
h_{d} & 0     & \cdots & 0 & 0
\emx
\end{equation}
\end{example}

The number of free parameters in \eqref{eq:affine_structure} is  $N = \#(2\set{A}) - \#(\set{A})$ (\textit{i.e.}, the number of unknowns in \eqref{eq:qhankel_rankmin}).
The corresponding set of multi-indices can be represented as
\begin{equation}\label{eq:ordering_missing_values}
2\set{A} \setminus \set{A}  = \{\bm{\beta}_1,\ldots,\bm{\beta}_{N}\},\quad \bm{\beta}_1 \prec \cdots \prec\bm{\beta}_{N}.
\end{equation}
Next, we define basis arrays $\iarr{e}^{\bm{\beta}} \eqdef \{e^{\bm{\beta}}\}_{\bm{\alpha} \in \bbZp^{\nvar}}$ as 
\[
e^{\bm{\beta}}_{\bm{\alpha}} \eqdef 
\begin{cases}
1, & {\bm{\alpha}} = {\bm{\beta}},  \\
0, & {\bm{\alpha}} \neq {\bm{\beta}},
\end{cases}
\]
and $\matr{S}_k$ as
\begin{equation}\label{eq:qhankel_Gk}
\matr{S}_k := \hmatr{\set{A}}{\iarr{e}^{\bm{\beta}_k}}.
\end{equation}
Finally, it is easy to see that
\[
\hmatr{\set{A}}{\iarr{h}} = \scrS(\vect{p}),
\]
if $\vect{p}$ is defined as  $p_k = \iarr{h}_{\bm{\beta}_k}$.

\begin{example}
For $\set{A} = \trgset{2}{3}$, shown in \eqref{eq:qhankel_trgset}, we have
\[
\begin{split}
\matr{S}_1 &= 
\left[
\begin{smallarray}{c|cc|ccc|cccc}
0  &  0 & 0 &   0 & 0 & 0 &   0 & 0 & 0 & 0 \\
\smallhline
0  &  0 & 0 &   0 & 0 & 0 &   1 & 0 & 0 & 0 \\
0  &  0 & 0 &   0 & 0 & 0 &   0 & 0 & 0 & 0 \\
\smallhline
0  &  0 & 0 &   1 & 0 & 0 &   0 & 0 & 0 & 0 \\
0  &  0 & 0 &   0 & 0 & 0 &   0 & 0 & 0 & 0 \\
0  &  0 & 0 &   0 & 0 & 0 &   0 & 0 & 0 & 0 \\
\smallhline
0  &  1 & 0 &   0 & 0 & 0 &   0 & 0 & 0 & 0 \\
0  &  0 & 0 &   0 & 0 & 0 &   0 & 0 & 0 & 0 \\
0  &  0 & 0 &   0 & 0 & 0 &   0 & 0 & 0 & 0 \\
0  &  0 & 0 &   0 & 0 & 0 &   0 & 0 & 0 & 0 
\end{smallarray}
\right],
\matr{S}_2 = 
\left[
\begin{smallarray}{c|cc|ccc|cccc}
0  &  0 & 0 &   0 & 0 & 0 &   0 & 0 & 0 & 0 \\
\smallhline
0  &  0 & 0 &   0 & 0 & 0 &   0 & 1 & 0 & 0 \\
0  &  0 & 0 &   0 & 0 & 0 &   1 & 0 & 0 & 0 \\
\smallhline
0  &  0 & 0 &   0 & 1 & 0 &   0 & 0 & 0 & 0 \\
0  &  0 & 0 &   1 & 0 & 0 &   0 & 0 & 0 & 0 \\
0  &  0 & 0 &   0 & 0 & 0 &   0 & 0 & 0 & 0 \\
\smallhline
0  &  0 & 1 &   0 & 0 & 0 &   0 & 0 & 0 & 0 \\
0  &  1 & 0 &   0 & 0 & 0 &   0 & 0 & 0 & 0 \\
0  &  0 & 0 &   0 & 0 & 0 &   0 & 0 & 0 & 0 \\
0  &  0 & 0 &   0 & 0 & 0 &   0 & 0 & 0 & 0 
\end{smallarray}
\right],  \\
\matr{S}_3 &= 
\left[
\begin{smallarray}{c|cc|ccc|cccc}
0  &  0 & 0 &   0 & 0 & 0 &   0 & 0 & 0 & 0 \\
\smallhline
0  &  0 & 0 &   0 & 0 & 0 &   0 & 0 & 1 & 0 \\
0  &  0 & 0 &   0 & 0 & 0 &   0 & 1 & 0 & 0 \\
\smallhline
0  &  0 & 0 &   0 & 0 & 1 &   0 & 0 & 0 & 0 \\
0  &  0 & 0 &   0 & 1 & 0 &   0 & 0 & 0 & 0 \\
0  &  0 & 0 &   1 & 0 & 0 &   0 & 0 & 0 & 0 \\
\smallhline
0  &  0 & 0 &   0 & 0 & 0 &   0 & 0 & 0 & 0 \\
0  &  0 & 1 &   0 & 0 & 0 &   0 & 0 & 0 & 0 \\
0  &  1 & 0 &   0 & 0 & 0 &   0 & 0 & 0 & 0 \\
0  &  0 & 0 &   0 & 0 & 0 &   0 & 0 & 0 & 0 
\end{smallarray}
\right], \ldots
\end{split}
\]
For Hankel structure, ($\set{A} = \trgset{1}{d}$, shown in \eqref{eq:hankel}), we have that $N=d$, and the matrices  $\matr{S}_k$, $k \in \{1,\ldots, N\}$ are defined as
\begin{equation}\label{eq:hankel_Gk}
\begin{split}
\matr{S}_1 &=\bsm
0    & 0     & \cdots & 0 & 0 \\
0     & 0     & \iddots & 0 & 1     \\
\vdots  & \iddots  & \iddots & \iddots  &  0 \\
0 & 0 & \iddots &  & \vdots\\
0 & 1  & 0 & \cdots & 0
\esm,
\;
\cdots,
\;
\matr{S}_{N-1} = \bsm
0    & 0     & \cdots & 0 & 0 \\
0     & 0     & \iddots & \iddots & \vdots      \\
\vdots  & \iddots  & \iddots & \iddots  & 0 \\
0 & \iddots & \iddots &  & 1 \\
0 & \cdots & 0 & 1 & 0
\esm,\;  \\
\matr{S}_{N} &= \bsm
0    & 0     & \cdots & 0 & 0 \\
0     & 0     & \iddots & 0 & 0     \\
\vdots  & \iddots  & \iddots & \iddots  & \vdots  \\
0 & 0 & \iddots &  & 0 \\
0 & 0     & \cdots & 0 & 1
\esm. 
\end{split}
\end{equation}
\end{example}

\subsection{Simple matrices}
Now we show that optimality conditions  hold true for simple matrices.
We take $\set{A} = \trgset{\nvar}{d}$, and consider the matrix completion problem \eqref{eq:qhankel_rankmin}, where $\matr{S}_k$ are defined in Section~\ref{sec:basis_matrices}.
\begin{lemma}\label{lem:qhankel_zero_root_projector}
Let $d' \eqdef \lfloor\frac{d}{2}\rfloor$, $\matr{P}_0 \in \bbC^{n\times n}$ be a matrix of the form 
\begin{equation}\label{eq:proj_zero}
\matr{P}_0 \eqdef \bmx  \matr{P}'_0 & \matr{0} \\ \matr{0} & \matr{0}\emx, \quad \matr{P}'_0 \in \bbC^{s\times s},
\end{equation}
where $s \le \binom{\nvar+ d'}{\nvar}$.
Then it holds that
\begin{enumeratex}
\item For any $k\in \{1,\ldots,N\}$, 
$\matr{P}^{\T}_0 \matr{S}_k \matr{P}_0  = \matr{0}$.
\item If, in addition, $s \le n'' := \binom{d''+m}{m}$, where $d'' := \lfloor\frac{d-1}{2}\rfloor$, then $\rank{\mathscr{A}(\matr{P}_0)} = N$.
\end{enumeratex}
\end{lemma}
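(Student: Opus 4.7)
The plan is to handle the two claims separately: claim~(i) is an easy support argument, while claim~(ii) reduces to a degree-counting fact about the set $2\set{A}\setminus\set{A}$.

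For claim~(i), I would first observe that $\matr{P}_0^{\T}\matr{S}_k\matr{P}_0$ can only be nonzero in the top-left $s\times s$ block, since $\matr{P}_0$ has this block structure. It thus suffices to show that the top-left $s\times s$ submatrix of $\matr{S}_k$ vanishes for every $k$, regardless of $\matr{P}_0'$. By \eqref{eq:qhankel_Gk}, the entry $(i,j)$ of $\matr{S}_k$ is nonzero only when $\vect{\alpha}_i+\vect{\alpha}_j=\vect{\beta}_k\in 2\set{A}\setminus\set{A}$, which forces $|\vect{\alpha}_i+\vect{\alpha}_j|>d$. But $s\le\binom{m+d'}{m}=\#\trgset{\nvar}{d'}$ together with the $\prec$-ordering guarantees that for $i,j\le s$ both $\vect{\alpha}_i,\vect{\alpha}_j\in\trgset{\nvar}{d'}$, so $|\vect{\alpha}_i+\vect{\alpha}_j|\le 2d'\le d$, a contradiction.

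For claim~(ii), I would first rephrase the full-row-rank condition as an injectivity statement on structured matrices. Using the identity
\begin{equation*}
\mvec\bigl((\matr{I}-\matr{P}_0)^{\T}\matr{H}(\matr{I}-\matr{P}_0)\bigr)=\bigl((\matr{I}-\matr{P}_0)\otimes(\matr{I}-\matr{P}_0)\bigr)^{\T}\mvec(\matr{H}),
\end{equation*}
the rows of $\mathscr{A}(\matr{P}_0)=\bfS^{\T}((\matr{I}-\matr{P}_0)\otimes(\matr{I}-\matr{P}_0))$ are linearly independent if and only if $(\matr{I}-\matr{P}_0)^{\T}\matr{H}(\matr{I}-\matr{P}_0)=\matr{0}$ forces $\matr{H}=\matr{0}$ for every symmetric $\matr{H}=\sum_{k=1}^{N}c_k\matr{S}_k$, i.e.\ every quasi-Hankel matrix whose entries in $\set{A}$ vanish. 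Because $\matr{I}-\matr{P}_0$ acts as the identity on the last $n-s$ coordinates, the bottom-right $(n-s)\times(n-s)$ sub-block of $(\matr{I}-\matr{P}_0)^{\T}\matr{H}(\matr{I}-\matr{P}_0)$ is exactly $\matr{H}_{22}$, the corresponding sub-block of $\matr{H}$ itself. Therefore the condition reduces to showing $\matr{H}_{22}=\matr{0}\Rightarrow\matr{H}=\matr{0}$.

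The main obstacle and real content of the proof is the combinatorial step: every free coefficient $h_{\vect{\beta}}$ with $\vect{\beta}\in 2\set{A}\setminus\set{A}$ must appear somewhere in $\matr{H}_{22}$. I would establish this by exhibiting, for each such $\vect{\beta}$, a componentwise decomposition $\vect{\beta}=\vect{\alpha}_i+\vect{\alpha}_j$ with $\vect{\alpha}_i,\vect{\alpha}_j\in\set{A}$ and $|\vect{\alpha}_i|,|\vect{\alpha}_j|>d''$. Under $\prec$, such multi-indices occupy positions $i,j>n''\ge s$, and hence $h_{\vect{\beta}}$ is an entry of $\matr{H}_{22}$ and vanishes. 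The existence of the decomposition reduces to verifying that the integer interval
\begin{equation*}
[\,\max(d''+1,\,|\vect{\beta}|-d),\;\min(d,\,|\vect{\beta}|-d''-1)\,]
\end{equation*}
is non-empty for every $|\vect{\beta}|\in\{d+1,\ldots,2d\}$; a short case distinction on the parity of $d$ (using $d''=\lfloor(d-1)/2\rfloor$, so that $2(d''+1)\le d+1\le|\vect{\beta}|$) handles this. Once a valid total degree $a$ is picked in this interval, a greedy pass through the coordinates of $\vect{\beta}$ produces the componentwise split with $|\vect{\alpha}_i|=a$, completing the argument.
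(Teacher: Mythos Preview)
Your proof is correct and follows essentially the same route as the paper. For part~(ii), the paper phrases the key combinatorial step as the Minkowski-sum containment $2\set{A}\setminus\set{A}\subseteq 2\set{D}$ with $\set{D}=\set{A}\setminus\trgset{\nvar}{d''}$ (stated as $2\set{D}=\trgset{\nvar}{2d}\setminus\trgset{\nvar}{2d''+1}\supseteq 2\set{A}\setminus\set{A}$), whereas you unpack this into the explicit interval check and greedy coordinate split; the content is the same.
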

\proofapp

\begin{example}
In the case of Hankel matrices ($\set{A} = \trgset{1}{d}$) the first bound is $s \le \lfloor \frac{d+2}{2} \rfloor$, and the first statement of Lemma~\ref{lem:qhankel_zero_root_projector} is evident from \eqref{eq:hankel_Gk}.
The second bound is $s \le \lfloor \frac{d+1}{2} \rfloor$, and a right lower block of the matrix $\matr{Q}_0\matr{H}\matr{Q}^{\top}_0$ (where $\matr{H}$ is defined in Remark~\ref{rem:equivalent_conditions_slrmc_DP15}) has the form
\[
\bmx
0     & \cdots     &0 &\Delta p_1&  \cdots & \Delta p_{r}  \\
\vdots    & \iddots     & \iddots & \iddots& \iddots &  \Delta p_{r+1}    \\
0  & \iddots  & \iddots & \iddots  & \iddots& \vdots   \\
\Delta p_1  & \iddots  & \iddots & \iddots& \iddots  &  \Delta p_{N-2}  \\
\vdots  & \iddots & \iddots & \iddots& \Delta p_{N-2}  & \Delta p_{N-1}  \\
\Delta p_{r} & \Delta p_{r+1}     & \cdots & \Delta p_{N-2} &\Delta  p_{N-1} &\Delta  p_{N}
\emx,
\]
and hence all $\Delta p_k$ are present in this block.
\end{example}

Lemma~\ref{lem:qhankel_zero_root_projector} immediately proves that the SLRMC is solved by nuclear norm minimization in very simple cases.

\begin{corollary}\label{cor:zero_root_completion}
In the completion problem~\eqref{eq:qhankel_rankmin} for the structure \eqref{eq:hankel}, let the vector $\vect{h}_d$ be of the form \eqref{eq:initial_zero_root}, with $r \le \frac{d+2}{2}$. Then we have the following.

\begin{enumeratex}
\item A solution of the rank minimization problem is given by $h_k = 0$ for $k > d$, and coincides with the solution of \eqref{eq:nnmin}.
\item If, in addition, $r < \frac{d+2}{2}$, the solution of \eqref{eq:nnmin} is unique.
\end{enumeratex}
\end{corollary}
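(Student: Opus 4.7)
The plan is to verify directly that the explicit completion $h_k=0$ for $k>d$ satisfies both the rank-minimization property and the first-order optimality condition for the nuclear norm relaxation (Proposition~\ref{prop:opt_sufficient_complex}), with the trivial dual variable $\matr{M}=\matr{0}$. The entire argument is essentially a one-step application of Lemma~\ref{lem:qhankel_zero_root_projector}, once the correct projector is identified.

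First I would observe that under $h_k=0$ for $k>d$, the Hankel matrix $\matr{S}_0$ from \eqref{eq:hankel_G0} has all nonzero entries supported on the anti-diagonals indexed by $0,\ldots,r-1$, hence confined to the top-left $r\times r$ block. Its rank is therefore at most $r$. Combined with the lower bound $\hrank{\vect{h}_d}=r$ (the characteristic polynomial of \eqref{eq:initial_zero_root} is $q(z)=z^r$), the canonical completion is rank-optimal. Since the column space lies in $\operatorname{span}\{\bfe_1,\ldots,\bfe_r\}$, the orthogonal projector $\matr{P}$ takes precisely the block form \eqref{eq:proj_zero} of Lemma~\ref{lem:qhankel_zero_root_projector} with $s=r\le \lfloor(d+2)/2\rfloor = \binom{1+d'}{1}$, matching the first bound of that lemma.

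For part (i), I would use that $\matr{B}=\matr{P}\matr{B}\matr{P}$ (the completed matrix is symmetric, so left and right projectors coincide with $\matr{P}$). Then Lemma~\ref{lem:qhankel_zero_root_projector}(i) gives, for every $k$,
\[
\langle \matr{S}_k, \matr{B}\rangle_F = \langle \matr{S}_k, \matr{P}\matr{B}\matr{P}\rangle_F = \langle \matr{P}^{\T}\matr{S}_k\matr{P}, \matr{B}\rangle_F = 0,
\]
so $\bfS^{\T}\vec(\matr{B})=\matr{0}$ and the first-order condition \eqref{eq:fo_condition_matrix} is satisfied by $\matr{M}=\matr{0}$, which trivially satisfies $\|\matr{M}\|_2\le 1$. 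Proposition~\ref{prop:opt_sufficient_complex}(i) then certifies the canonical completion as a minimizer of \eqref{eq:nnmin}.

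For part (ii), the strict inequality $r<(d+2)/2$ translates to $s=r\le\lfloor(d+1)/2\rfloor=n''$ (both for even and odd $d$), so Lemma~\ref{lem:qhankel_zero_root_projector}(ii) yields $\rank{\mathscr{A}(\matr{P})}=N$. Combined with $\|\matr{M}\|_2=0<1$, the uniqueness clause of Proposition~\ref{prop:opt_sufficient_complex}(ii) delivers uniqueness. The only real obstacle is bookkeeping on the bounds $r\le\lfloor(d+2)/2\rfloor$ and $r\le\lfloor(d+1)/2\rfloor$ matching the two parts of Lemma~\ref{lem:qhankel_zero_root_projector} for $m=1$; the substantive content of the optimality argument has already been absorbed into that lemma.
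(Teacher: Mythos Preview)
Your proposal is correct and follows essentially the same route as the paper: identify the block projector $\matr{P}_0$ of \eqref{eq:proj_zero} with $s=r$, invoke Lemma~\ref{lem:qhankel_zero_root_projector}(i) to obtain $\langle \matr{S}_k,\matr{B}\rangle_F=0$ (the paper writes $\matr{B}_{\bbC}=\matr{P}_0\matr{A}\matr{P}_0^{\T}$ for some $\matr{A}$ rather than $\matr{B}=\matr{P}\matr{B}\matr{P}$, but this is the same observation), and conclude via the optimality conditions with $\matr{M}=\matr{0}$. The paper phrases the last step through Remark~\ref{rem:equivalent_conditions_slrmc_DP15} whereas you cite Proposition~\ref{prop:opt_sufficient_complex} directly; these are equivalent, and your bound-checking for parts~(i) and~(ii) of Lemma~\ref{lem:qhankel_zero_root_projector} in the $m=1$ case is accurate.
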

\begin{proof}
\begin{enumeratex}
\item  Let $\matr{P}_0$ be as in \eqref{eq:proj_zero}, with $s=r$, and $\matr{P}'_0 = \matr{I}_s$. Then,
the matrix $\matr{B}_{\bbC}$ in \eqref{eq:skew_projector} has the form
$\matr{B}_{\bbC} = \matr{P}_0 \matr{A} \matr{P}^{\T}_0$.
By Lemma~\ref{lem:qhankel_zero_root_projector}, we have that $\langle \matr{S}_k,\matr{B}_{\bbC} \rangle_F = \langle \matr{P}^{\T}_0\matr{S}_k\matr{P}_0,\matr{A} \rangle_F = 0$.
The rest follows from Remark~\ref{rem:equivalent_conditions_slrmc_DP15}.
\item  Follows from Remark~\ref{rem:equivalent_conditions_slrmc_DP15}.
\end{enumeratex}
\end{proof}

\subsection{Perturbations of simple projectors}
Now we show that the conditions of Proposition~\ref{prop:opt_sufficient_complex} hold true for perturbations of simple matrices.
First, we need a simple inequality on  distance between projectors.
\begin{lemma}\label{lem:proj_dist}
Let $\matr{B} = \matr{B}^{\T}  = \matr{U}  \matr{V}^{\top}$, where $\matr{U},\matr{V}  \in \bbR^{n\times r}$ such that $\matr{U}^{\H}\matr{U} = \matr{V}^{\H}\matr{V} = \matr{I}_r$.
Let $\matr{P}_{0}$ be an orthogonal projector on a subspace of $\bbC^{n}$, such that $\rrank(\matr{P}_{0}) = r$. Then we have that 
\[
\|\matr{B} - \matr{P}_0 \matr{B} \matr{P}^{\T}_0\|^2_F \le  2\|(\matr{I}-\matr{P}_0) \matr{U}\|^2_F = \|\matr{P}-\matr{P}_0\|^2_F,
\]
where $\matr{P} = \matr{B}\matr{B}^{\H} = \matr{U}\matr{U}^{\H}$.
\end{lemma}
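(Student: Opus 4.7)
The plan is to prove the equality and the inequality in two independent algebraic steps, both relying on the fact that $\matr{P}_0$ and $\matr{I}-\matr{P}_0$ are Hermitian (orthogonal) projectors of rank $r$ and $n-r$, respectively, and that $\matr{B}$ is real symmetric with $\matr{U}^{\H}\matr{U} = \matr{V}^{\H}\matr{V} = \matr{I}_r$.

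For the equality $\|\matr{P}-\matr{P}_0\|_F^2 = 2\|(\matr{I}-\matr{P}_0)\matr{U}\|_F^2$, I would simply expand both sides using projector identities. On the left, since $\matr{P}=\matr{P}^{\H}$ and $\matr{P}_0=\matr{P}_0^{\H}$ are idempotent of trace $r$,
\[
\|\matr{P}-\matr{P}_0\|_F^2 = \trace{(\matr{P}-\matr{P}_0)^2} = 2r - 2\trace{\matr{P}\matr{P}_0} = 2r - 2\trace{\matr{U}^{\H}\matr{P}_0\matr{U}}.
\]
On the right, $(\matr{I}-\matr{P}_0)^{\H}(\matr{I}-\matr{P}_0)=\matr{I}-\matr{P}_0$ gives $\|(\matr{I}-\matr{P}_0)\matr{U}\|_F^2 = \trace{\matr{U}^{\H}(\matr{I}-\matr{P}_0)\matr{U}} = r - \trace{\matr{U}^{\H}\matr{P}_0\matr{U}}$, so multiplying by $2$ matches.

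For the inequality, the key move is the decomposition
\[
\matr{B} - \matr{P}_0\matr{B}\matr{P}_0^{\top} = (\matr{I}-\matr{P}_0)\matr{B} + \matr{P}_0\matr{B}(\matr{I}-\matr{P}_0^{\top}),
\]
whose two summands I expect to be \emph{Frobenius-orthogonal}: their inner product contains the factor $(\matr{I}-\matr{P}_0)^{\H}\matr{P}_0 = (\matr{I}-\matr{P}_0)\matr{P}_0 = \matr{0}$. This yields a Pythagorean identity. To bound the first term, use $\matr{B} = \matr{U}\matr{V}^{\top}$ with $\matr{V}^{\top}\matr{V} = \matr{I}_r$ to obtain $\|(\matr{I}-\matr{P}_0)\matr{B}\|_F^2 = \trace{\matr{U}^{\top}(\matr{I}-\matr{P}_0)\matr{U}} = \|(\matr{I}-\matr{P}_0)\matr{U}\|_F^2$. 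For the second, apply the contraction $\|\matr{P}_0 X\|_F \le \|X\|_F$ followed by transposition (using $\matr{B}^{\top}=\matr{B}$ and $\|A\|_F=\|A^{\top}\|_F$), giving $\|\matr{P}_0\matr{B}(\matr{I}-\matr{P}_0^{\top})\|_F \le \|\matr{B}(\matr{I}-\matr{P}_0^{\top})\|_F = \|(\matr{I}-\matr{P}_0)\matr{B}\|_F = \|(\matr{I}-\matr{P}_0)\matr{U}\|_F$. Summing the two squared bounds produces $2\|(\matr{I}-\matr{P}_0)\matr{U}\|_F^2$.

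The main obstacle I anticipate is getting the sharp factor $2$ rather than a loose $4$: a direct triangle inequality on any splitting would yield $\|a+b\|^2 \le 2(\|a\|^2+\|b\|^2)$ with each piece already $\le \|(\matr{I}-\matr{P}_0)\matr{U}\|_F^2$, thus $4$. The Pythagorean identity is therefore essential, and it hinges on the asymmetric placement of $(\matr{I}-\matr{P}_0)$ on the left in one term and $(\matr{I}-\matr{P}_0^{\top})$ on the right in the other---a choice that makes the cross term vanish through $(\matr{I}-\matr{P}_0)\matr{P}_0=\matr{0}$ rather than requiring any commutation property.
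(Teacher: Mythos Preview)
Your proof is correct and follows the same decomposition as the paper: write $\matr{B}-\matr{P}_0\matr{B}\matr{P}_0^{\T}=(\matr{I}-\matr{P}_0)\matr{B}+\matr{P}_0\matr{B}(\matr{I}-\matr{P}_0^{\T})$, bound each piece by $\|(\matr{I}-\matr{P}_0)\matr{U}\|_F$, and compute $\|\matr{P}-\matr{P}_0\|_F^2$ by expanding the trace. The paper states the first inequality $\|a+b\|_F^2 \le \|a\|_F^2+\|b\|_F^2$ without comment; you make explicit that this is a Pythagorean identity coming from $(\matr{I}-\matr{P}_0)^{\H}\matr{P}_0=\matr{0}$, which is exactly the justification that line needs.
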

\begin{proof}
Since $\matr{B}\!-\!\matr{P}_0\matr{B}\matr{P}^{\T}_0 = \matr{B}\!-\!\matr{P}_0\matr{B} \!+\! \matr{P}_0\matr{B} \!-\! \matr{P}_0 \matr{B} \matr{P}_0^{\T}$,
\[
\begin{split}
&\|\matr{B} \!-\! \matr{P}_0 \matr{B} \matr{P}^{\T}_0\|^2_F  \le \|(\matr{I} \!-\! \matr{P}_0) \matr{B}\|^2_F + \|\matr{P}_0 \matr{B} (\matr{I} \!-\! \matr{P}_0^{\T})\|^2_F \\
& \le 2 \|(\matr{I}-\matr{P}_0) \matr{B}\|^2_F = 2 \|(\matr{I}-\matr{P}_0) \matr{U}\|^2_F
\end{split}
\]
Finally, 
\begin{equation}\label{eq:norm_diff_proj_int}
\begin{split}
&\!\!\|\matr{P} - \matr{P}_0\|^2_F  = \trace{\matr{P}^2 + \matr{P}_0^2 - \matr{P}\matr{P}_0 - \matr{P}_0\matr{P}}  \\
&\!\!= 2 \trace{2(\matr{I}-\matr{P}_0)\matr{P}} = 2\|(\matr{I}-\matr{P}_0)\matr{U}\|_F^2,
\end{split}\end{equation}
which completes the proof.
\end{proof}

\begin{remark}
For a symmetric matrix $\struct{\vect{p}}$, the matrix $\matr{B}_{\bbC}$ defined in Proposition~\ref{prop:opt_sufficient_complex} is also symmetric.
\end{remark}

Next, we prove a perturbation lemma that uses a bound on the distance between projectors. 
\begin{lemma}\label{lem:zero_root_perturbation}
Let  $\bfS$ and $\mathscr{A}(\matr{P})$ be defined in \eqref{eq:bfG} and \eqref{eq:fo_condition_matrix_left}.
Let $\matr{P}_0 \in \bbC^{n \times n}$ be a rank-$r$ projector matrix such that 
$\rank{\mathscr{A}(\matr{P}_0)} = N$,  $\matr{P}^{\T}_0 \matr{S}_k \matr{P}_0 = 0$ for all $k \in \{1,\ldots,N\}$, and let $\matr{Q}_0 \eqdef \matr{I}-\matr{P}_0$.

Then there exists a constant $\varepsilon$ such that for any $\matr{B}$ as in Lemma~\ref{lem:proj_dist} satisfying 
\[
\|\matr{P} - \matr{P}_0\|_F <\varepsilon, 
\]
where $\matr{P} = \matr{B}\matr{B}^{\H}$, it holds that
\begin{enumeratex}
\item The matrix $\mathscr{A}(\matr{P})$ is full row rank.
\item There exists a matrix $\matr{M} \in \bbC^{n \times n}$, $\|\matr{M}\|_2 < 1$, such that 
\eqref{eq:fo_condition_matrix} is satisfied.
\end{enumeratex}
\end{lemma}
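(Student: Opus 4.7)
The plan is to combine continuity of $\mathscr{A}(\matr{P})$ in $\matr{P}$ with the quantitative distance bound provided by Lemma~\ref{lem:proj_dist}, and then construct an explicit $\matr{M}$ via the Moore--Penrose pseudoinverse of $\mathscr{A}(\matr{P})$.

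For (i), I would observe that the map $\matr{P} \mapsto \mathscr{A}(\matr{P}) = \bfS^{\T}((\matr{I}-\matr{P})\otimes(\matr{I}-\matr{P}))$ is polynomial in $\matr{P}$, and that full row rank is equivalent to the smallest singular value being strictly positive, which is a continuous function. Since by hypothesis $\rank{\mathscr{A}(\matr{P}_0)} = N$, there is a neighborhood of $\matr{P}_0$ on which $\mathscr{A}(\matr{P})$ retains full row rank, so (i) holds after shrinking $\varepsilon$ once. Consequently the Moore--Penrose pseudoinverse $\mathscr{A}(\matr{P})^{\dagger}$ is well-defined and, by continuity of the pseudoinverse under constant-rank perturbations, its spectral norm is uniformly bounded by some constant $C$ on a (possibly smaller) neighborhood of $\matr{P}_0$.

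For (ii), the key observation is that the hypothesis $\matr{P}_0^{\T} \matr{S}_k \matr{P}_0 = \matr{0}$ and the cyclic property of the trace yield
\[
\langle \matr{S}_k, \matr{P}_0 \matr{B} \matr{P}_0^{\T}\rangle_F = \trace{\matr{P}_0^{\T} \matr{S}_k \matr{P}_0 \matr{B}} = 0
\]
for every $k$, so that $\bfS^{\T} \mvec(\matr{P}_0 \matr{B} \matr{P}_0^{\T}) = \matr{0}$. Splitting $\matr{B} = (\matr{B} - \matr{P}_0 \matr{B} \matr{P}_0^{\T}) + \matr{P}_0 \matr{B} \matr{P}_0^{\T}$ and applying Lemma~\ref{lem:proj_dist} gives
\[
\|\bfS^{\T} \mvec(\matr{B})\|_2 = \|\bfS^{\T} \mvec(\matr{B} - \matr{P}_0 \matr{B} \matr{P}_0^{\T})\|_2 \le \|\bfS\|_2 \, \|\matr{P} - \matr{P}_0\|_F.
\]
I would then set $\mvec(\matr{M}) := -\mathscr{A}(\matr{P})^{\dagger} \bfS^{\T} \mvec(\matr{B})$, which satisfies \eqref{eq:fo_condition_matrix} by (i), and estimate
\[
\|\matr{M}\|_2 \le \|\matr{M}\|_F \le C \, \|\bfS\|_2 \, \|\matr{P} - \matr{P}_0\|_F.
\]
Choosing $\varepsilon < 1/(C\|\bfS\|_2)$ (and small enough for (i)) forces $\|\matr{M}\|_2 < 1$, establishing (ii).

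No major obstacle is expected: both parts rest on standard continuity/perturbation arguments, and the quantitative control of $\bfS^{\T}\mvec(\matr{B})$ is precisely what Lemma~\ref{lem:proj_dist} was tailored to provide. The only delicate point is the uniform boundedness of $\|\mathscr{A}(\matr{P})^{\dagger}\|_2$, which follows from continuity of the pseudoinverse under the constant-rank condition already secured in (i); everything else is a routine application of Cauchy--Schwarz together with the two inequalities of Lemma~\ref{lem:proj_dist}.
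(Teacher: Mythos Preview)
Your proposal is correct and follows essentially the same approach as the paper: both arguments establish (i) by continuity of $\mathscr{A}(\matr{P})$, then for (ii) construct $\matr{M}$ via the Moore--Penrose pseudoinverse, use $\matr{P}_0^{\T}\matr{S}_k\matr{P}_0=\matr{0}$ to replace $\mvec(\matr{B})$ by $\mvec(\matr{B}-\matr{P}_0\matr{B}\matr{P}_0^{\T})$, and invoke Lemma~\ref{lem:proj_dist} to bound the result by $\|\matr{P}-\matr{P}_0\|_F$. The only cosmetic difference is that the paper phrases the uniform bound on $\|\mathscr{A}(\matr{P})^{\dagger}\|_2$ as a lower bound $\delta_0$ on $\sigma_{\min}(\mathscr{A}(\matr{P}))$ rather than your constant $C$.
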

\begin{proof}
\begin{enumeratex}
\item
Since $\mathscr{A}(\matr{P})$ depends polynomially on the entries of $\matr{P}$, and $\rrank \mathscr{A}(\matr{P}_0) = N$, the rank is preserved in a neighborhood of $\matr{P}_0$. 
Thus  $\rrank \mathscr{A}(\matr{P}) = N$ in a neighbourhood of $\matr{P}_0$.

\noindent\item
Let $\varepsilon_1>0$  be a number such that for any $\matr{P}$,  $\|\matr{P}-\matr{P}_0\|_F \le \varepsilon_1$ we have $\mathscr{A}(\matr{P}) = N$ and $0 < \delta_0 < \sigma_{min}(\mathscr{A}(\matr{P}))$ for some number $\delta_0$. (We can choose such $\varepsilon_1$ by continuity of the smallest singular value.) 
Then, in the $\varepsilon_1$-neighborhood, a solution of \eqref{eq:fo_condition_matrix} always exists.
Next, we consider the  minimum Frobenius norm solution $\matr{M}_* = \matr{M}_*(\matr{P})$ of \eqref{eq:fo_condition_matrix}, which is given by
\[ \begin{split}
&\mvec (\matr{M}_*(\matr{P}))) =  -\mathscr{A}(\matr{P})^{\dagger}  \bfS^{\T} \mvec (\matr{B}) \\ &=  -\mathscr{A}(\matr{P})^{\dagger}  \bfS^{\T} \mvec (\matr{B} - \matr{P}_0\matr{B}\matr{P}^{\T}_0), 
\end{split}
\]
where the matrix $\mathscr{A}(\matr{P})^{\dagger}$ is the pseudoinverse of $\mathscr{A}(\matr{P})$, and the last equality holds since $\matr{P}^{\T}_0 \matr{S}_k \matr{P}_0 = 0$ for all $k \in \{1,\ldots,N\}$.
Then, by Lemma~\ref{lem:proj_dist},
\begin{equation}\label{eq:norm_inequality}
\begin{split}
&\|\matr{M}_*\|_2 \le \|\mvec (\matr{M}_*(\matr{P})) \|_2 \le \\
&\|\mathscr{A}(\matr{P})^{\dagger}\|_2 \|\bfS^{\T}\!\|_2  \|\matr{B} \!-\! \matr{P}_0\matr{B}\matr{P}^{\T}_0\|_F <
\frac{\| \bfS\|_2}{\delta_0} \|\matr{P} \!-\! \matr{P}_0\|_F.
\end{split}
\end{equation}
Finally, define $\varepsilon \eqdef \min(\varepsilon_1, \frac{\delta_0}{\| \bfS\|_2})$. 
For such an $\varepsilon$, the right hand side of \eqref{eq:norm_inequality} is  less than or equal to $1$ if $\|\matr{P} - \matr{P}_0\|_F < \varepsilon$, which completes the proof.
\end{enumeratex}
\end{proof}

% !TEX root = mat_completion.tex
\section{Main results}\label{sec:results}
\subsection{Hankel case}

First, we prove a result that generalizes Theorem~\ref{thm:rank_one}.
\begin{theorem}\label{thm:rank_r}
For any $d$, $n=d+1$, and $r < \frac{d+2}{2}$ there exists  $\rho = \rho(d,r) > 0$ such that
for all $\vect{h}_d$ with $\hrank{\vect{h}_d} = r$ and  $|\lambda_k| < \rho$ in \eqref{eq:char_poly}, the solution of \eqref{eq:nnmin} is unique and coincides with \eqref{eq:canonical_completion}.
\end{theorem}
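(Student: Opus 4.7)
The plan is to view this as an application of the perturbation machinery of Section~\ref{sec:lemmas}, with the unperturbed reference being the all-zero-roots case treated by Corollary~\ref{cor:zero_root_completion}. Fix $\vect{h}_d$ with $\hrank{\vect{h}_d}=r$ and characteristic polynomial with roots $\lambda_k$. Since $r<(d+2)/2$, the characteristic vector $\vect{q}$ is unique up to scaling (see Section~\ref{sec:lin_rec}) and the canonical completion given by \eqref{eq:canonical_completion} is the unique minimum-rank completion. Let $\matr{H}\eqdef\hmatr{\set{A}}{\arr{h}}$ denote this completed (rank-$r$) symmetric Hankel matrix, and let $\matr{P}$ be the orthogonal projector onto its column space. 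By Proposition~\ref{prop:opt_sufficient_complex}, it suffices to exhibit a matrix $\matr{M}\in\bbC^{n\times n}$ with $\|\matr{M}\|_2<1$ satisfying \eqref{eq:fo_condition_matrix} and to show that $\mathscr{A}(\matr{P})$ is full row rank.

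The reference projector is the natural one: $\matr{P}_0\eqdef\Diag(\underbrace{1,\ldots,1}_{r},0,\ldots,0)$, of block form \eqref{eq:proj_zero} with $s=r$. Since $r<(d+2)/2$ implies both $r\le\lfloor(d+2)/2\rfloor$ and $r\le\lfloor(d+1)/2\rfloor$, Lemma~\ref{lem:qhankel_zero_root_projector} yields $\matr{P}_0^{\T}\matr{S}_k\matr{P}_0=\matr{0}$ for every $k$ and $\rrank\mathscr{A}(\matr{P}_0)=N$. Hence $\matr{P}_0$ satisfies the hypotheses of Lemma~\ref{lem:zero_root_perturbation}, which provides a constant $\varepsilon>0$ such that whenever $\|\matr{P}-\matr{P}_0\|_F<\varepsilon$, the two conditions required by Proposition~\ref{prop:opt_sufficient_complex}(ii) hold.

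The remaining step is to show that $\matr{P}\to\matr{P}_0$ as $\max_k|\lambda_k|\to 0$. At $\lambda_1=\cdots=\lambda_r=0$ (i.e., $q(z)=q_r z^r$) the canonical completion collapses to \eqref{eq:initial_zero_root}, and the corresponding Hankel matrix is supported inside the top-left anti-triangular block $\{(i,j):i+j\le r+1\}$. Since $\hrank{\vect{h}_d}=r$, its column space equals exactly $\Span\{e_1,\ldots,e_r\}$, so the projector is precisely $\matr{P}_0$. For small but nonzero roots one factors $\matr{H}$ through the (confluent) Vandermonde matrix $V$ built on $\lambda_1,\ldots,\lambda_s$ with multiplicities $\nu_k$ (Lemma~\ref{lem:rank_exp_array} and its extension for multiple roots). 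Using column operations of divided-differences type, $V$ can be replaced by $V'=VM$, with $M$ invertible, whose $i$-th column has a $1$ in position $i$, zeros in positions $<i$ among the first $r$ entries, and entries that are polynomial in the $\lambda_k$'s (vanishing at $\lambda=0$) in the remaining positions. Hence $\colspan V=\colspan V'\to\Span\{e_1,\ldots,e_r\}$, so the orthogonal projector $\matr{P}$ on $\colspan\matr{H}=\colspan V$ depends continuously on the $\lambda_k$ on the rank-$r$ locus and tends to $\matr{P}_0$. Choosing $\rho=\rho(d,r)$ small enough that $|\lambda_k|<\rho$ forces $\|\matr{P}-\matr{P}_0\|_F<\varepsilon$ completes the proof.

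The main technical obstacle is the continuity argument in the third paragraph when the characteristic polynomial has roots of multiplicity greater than one: the factorization $\matr{H}=V\Diag(\vect{c})V^{\T}$ must be replaced by its confluent analogue, and one must verify that the divided-differences normalization $V\mapsto V'$ extends uniformly through coalescing roots. One clean way to bypass the confluent Vandermonde bookkeeping is to parameterize by the coefficients of $q(z)$ together with the $r$ initial values $h_0,\ldots,h_{r-1}$ (which are jointly continuous in the roots with multiplicity); then $\matr{H}$ depends continuously on these parameters, the rank stays equal to $r$ by the hypothesis $\hrank{\vect{h}_d}=r$, and the Moore--Penrose projector $\matr{P}=\matr{H}\matr{H}^{\dagger}$ is continuous on the rank-$r$ stratum, reducing the claim to the already-handled limit $q(z)=z^r$.
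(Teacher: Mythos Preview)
Your high-level strategy coincides with the paper's: invoke Lemma~\ref{lem:zero_root_perturbation} with the reference projector $\matr{P}_0=\Diag(\matr{I}_r,\matr{0})$ (whose hypotheses you correctly verify via Lemma~\ref{lem:qhankel_zero_root_projector}, using $r<\tfrac{d+2}{2}\Rightarrow r\le\lfloor\tfrac{d+1}{2}\rfloor$), and then prove $\|\matr{P}-\matr{P}_0\|_F\to 0$ uniformly as $\max_k|\lambda_k|\to 0$. The divergence from the paper is entirely in this last step.

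The paper works on the \emph{nullspace} side rather than the column-space side. It uses that the kernel of the completed rank-$r$ Hankel matrix is exactly the column span of the $n\times(n-r)$ banded Toeplitz matrix $\matr{K}$ built from the coefficients $q_0,\ldots,q_r$ of the characteristic polynomial (normalized to $q_r=1$). This makes it transparent that $\matr{P}$ depends only on $\vect{q}$, not on $h_0,\ldots,h_{r-1}$. One then bounds $\|\matr{P}-\matr{P}_0\|_F$ by $\|\matr{P}_0\matr{K}\|_F\cdot\|\matr{\Sigma}^{-1}\|_F$ from an SVD $\matr{K}=\matr{U}_\bot\matr{\Sigma}\matr{V}_\bot^{\H}$: the numerator involves only $q_0,\ldots,q_{r-1}$, which by Vieta are $O(\max_k|\lambda_k|)$, while $\sigma_{\min}(\matr{K})$ is bounded below via the classical estimate $\sigma_{\min}^2(\matr{K})\ge\min_{|z|=1}|q(z)|^2$ for banded Toeplitz matrices. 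This gives an explicit inequality $\|\matr{P}-\matr{P}_0\|_F^2\le\widetilde{C}\,\max_k|\lambda_k|$, hence a quantitative $\rho$.

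Your column-space route through the (confluent) Vandermonde factorization is a legitimate alternative; the divided-difference normalization indeed produces entries below row $r$ that are \emph{symmetric} polynomials in the $\lambda_k$ (equivalently, polynomials in $q_0,\ldots,q_{r-1}$), so the construction passes continuously through coalescing roots and the required uniformity follows because $\colspan\matr{H}=\colspan V$ depends only on the root configuration. Your second route, however, has a genuine gap as written: parameterizing by $(q_0,\ldots,q_{r-1},h_0,\ldots,h_{r-1})$ and invoking continuity of $\matr{H}\matr{H}^{\dagger}$ on the rank-$r$ stratum does not by itself give a $\rho$ independent of $\vect{h}_d$, since the $h$-parameters range over a non-compact set. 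The missing ingredient is precisely that $\matr{P}$ is a function of $\vect{q}$ alone --- which the paper's $\matr{K}$-matrix description delivers immediately, and which your Vandermonde argument also supplies once you note $\colspan\matr{H}=\colspan V$.
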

\proofapp

\begin{note}
Theorem~\ref{thm:rank_one} proves a special case of Theorem~\ref{thm:rank_r} and tells  that $\rho(d,1) = 1$ (for $\vect{h}_d \in \bbR^{d+1}$).
\end{note}

\subsection{Quasi-Hankel case}\label{sec:qhankel_results}

% !TEX root = mat_completion.tex
We first prove a lemma on the limits of projectors on column spaces of quasi-Hankel matrices of the form \eqref{eq:vandermonde_fac} (for  $\vect{z}_1,\ldots,\vect{z}_r$ in general position).

\begin{lemma}\label{lem:qhankel_limit_projectors}
Let $\set{A} = \trgset{\nvar}{d}$, $r \le \binom{\nvar + d-1}{\nvar}$, and $\{\vect{y}_1,\ldots, \vect{y}_r\} \subset \bbC^{\nvar}$ be some points.
Furthermore, assume that 
there exists a set $\set{D}$, $\trgset{\nvar}{d_0-1} \subset \set{D}\subseteq \trgset{\nvar}{d_0}$ (for some $d_0$, $0 \le d_0 \le d-1$)
such that the points $\vect{y}_1,\ldots, \vect{y}_r$ are $\set{D}$-independent.

Let $\matr{P}(\rho)$ denote the projector on the column space of $\vmatr{\set{A}}{\rho\vect{y}_1, \ldots, \rho \vect{y}_r}$.
Then we have that
\begin{itemize}
\item if $r = \binom{\nvar + d_0}{\nvar}$ (i.e., $\calD = \trgset{\nvar}{d_0}$), then
\begin{equation}\label{eq:qhankel_proj_lim_trg}
\lim_{\rho \to 0} \matr{P}(\rho) = \bmx  \matr{I}_r & \matr{0} \\ \matr{0} & \matr{0}\emx,
\end{equation}
\item if $r < \binom{\nvar + d_0}{\nvar}$ (i.e., $\calD \subset \trgset{\nvar}{d_0}$), then
\begin{equation}\label{eq:qhankel_proj_lim_general}
\lim_{\rho \to 0}\matr{P}(\rho) = \bmx  \matr{I}_K & \matr{0} & \matr{0} \\ \matr{0} & \matr{P}_2 &\matr{0}  \\ \matr{0} & \matr{0} & \matr{0}\emx,
\end{equation}
where $\matr{P}_2 \in \bbC^{L\times L}$, with $L = \binom{\nvar + d_0}{\nvar} - K$, $\rank{\matr{P}_2} = r- K$, and $K  \eqdef \binom{\nvar + d_0-1}{\nvar}.$
\end{itemize}
\end{lemma}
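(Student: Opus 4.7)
The plan is to factor the quasi-Vandermonde matrix as $V(\rho) \eqdef \vmatr{\set{A}}{\rho\vect{y}_1, \ldots, \rho\vect{y}_r} = D(\rho) V(1)$, where $D(\rho) = \diag{\rho^{|\vect{\alpha}_1|}, \ldots, \rho^{|\vect{\alpha}_n|}}$, and to exploit the block structure inherited from grouping rows by total degree. Write $V(1) = \bmx A \\ B \\ C \emx$, where $A$ holds the $K$ rows indexed by $\trgset{m}{d_0-1}$, $B$ the $L$ rows indexed by $\degset{m}{d_0}$, and $C$ the remaining rows of degree exceeding $d_0$. Throughout I would assume $\set{D}$ has been shrunk so that $|\set{D}| = r$, which is the natural minimal choice making the dichotomy $r = n_{d_0}$ vs $r < n_{d_0}$ equivalent to the parentheticals $\set{D} = \trgset{m}{d_0}$ vs $\set{D} \subsetneq \trgset{m}{d_0}$ in the statement.

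The first key step is a rank identity: $\rrank A = K$ and consequently $\rrank \bmx A \\ B \emx = r$. With $|\set{D}| = r$, decompose $\set{D} = \trgset{m}{d_0-1} \sqcup \set{D}'$ with $\set{D}' \subseteq \degset{m}{d_0}$ and $|\set{D}'| = r - K$; sub-additivity of rank applied to the two horizontal blocks of $\vmatr{\set{D}}$ gives $r = \rrank \vmatr{\set{D}} \le \rrank A + \rrank \vmatr{\set{D}'} \le K + (r - K)$, forcing $\rrank A = K$. The second identity then follows since $\bmx A \\ B \emx$ contains all rows of $\vmatr{\set{D}}$ and has only $r$ columns.

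Next I would introduce a right-multiplier $R = [A^+ \mid W] \in \bbC^{r\times r}$, where $A^+$ is any right-inverse of $A$ (exists by full row rank) and the columns of $W$ span $\ker A$. Column operations preserve column space, so $\colspan V(\rho) = \colspan V(\rho) R$ for $\rho>0$. Scale columns further by $\Lambda(\rho) = \diag{\rho^{-|\vect{\alpha}_1|}, \ldots, \rho^{-|\vect{\alpha}_K|}, \rho^{-d_0}, \ldots, \rho^{-d_0}}$ and verify directly, using $A A^+ = \matr{I}_K$, $A W = \matr{0}$, and the observation that every remaining exponent of $\rho$ is strictly positive, that
\[
V(\rho) R \Lambda(\rho) \longrightarrow \tilde V \eqdef \bmx \matr{I}_K & \matr{0} \\ \matr{0} & BW \\ \matr{0} & \matr{0} \emx \quad \text{as } \rho \to 0.
\]
The matrix $\tilde V$ has full column rank $r$ because $BW$ has rank $r - K$: its kernel is $W^{-1}(\ker A \cap \ker B) = 0$, since $\ker A \cap \ker B = \ker \bmx A \\ B \emx = 0$ by the second rank identity.

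Continuity of the column-span map on full-column-rank matrices then yields $\colspan V(\rho) \to \colspan \tilde V$ in the Grassmannian, so $\matr{P}(\rho) \to \matr{P}_{\tilde V}$. A block-wise reading of the projector onto $\colspan \tilde V$ produces the stated form: the first $K$ standard basis vectors sit inside $\colspan \tilde V$ (giving the $\matr{I}_K$ block), the last $n - n_{d_0}$ rows of $\tilde V$ are identically zero, and the middle $L$ rows contribute $\matr{P}_2$, the orthogonal projector onto $\colspan(BW) \subseteq \bbC^L$, of rank $r - K$. In Case~1 ($r = n_{d_0}$), we have $r - K = L$, so $BW$ is square invertible and $\matr{P}_2 = \matr{I}_L$, collapsing the limit to $\bmx \matr{I}_r & \matr{0} \\ \matr{0} & \matr{0} \emx$; in Case~2 it retains the three-block form. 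The main technical obstacle is the very first step: establishing $\rrank A = K$ depends crucially on the minimality assumption $|\set{D}| = r$, since a looser $\set{D}$ (say $\set{D} = \trgset{m}{d_0}$ with only partial rank contributed from $\degset{m}{d_0}$) can satisfy $\set{D}$-independence while $\rrank A < K$, in which case the limit projector no longer has the block form claimed in the lemma.
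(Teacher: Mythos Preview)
Your proof is correct and takes a genuinely different route from the paper's. The paper argues via the \emph{left kernel} of the Vandermonde matrix: it builds a border basis $p_j(\vect{y}) = \vect{y}^{\vect{\beta}_j} - \sum_i w_{i,j}\vect{y}^{\vect{\alpha}_i}$ of the vanishing ideal of $\{\vect{y}_k\}$, scales to obtain polynomials $p_{j,\rho}$ vanishing on $\{\rho\vect{y}_k\}$, multiplies by monomials to span the kernel $\calK_\rho$, and then reads off the limiting kernel (hence the limiting projector) as $\rho\to 0$. You instead work directly on the \emph{column space}: factor $V(\rho)=D(\rho)V(1)$, right-multiply by $R=[A^{+}\mid W]$ to block-triangularize, rescale columns by $\Lambda(\rho)$, and use continuity of the projector on full-column-rank matrices. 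Your approach is more elementary---pure linear algebra with no recourse to border bases or ideal-theoretic language---and the limit computation is arguably cleaner, since every surviving exponent of $\rho$ is manifestly positive. The paper's approach, on the other hand, makes the algebraic structure (the interpolating ideal and its border basis) explicit, which is natural given the surrounding context of moment matrices and flat extensions. Both proofs tacitly require $\#\set{D}=r$; the paper simply writes $\set{D}=\{\vect{\alpha}_1,\ldots,\vect{\alpha}_r\}$ at the outset without comment, so your explicit flag that the rank identity $\rrank A = K$ hinges on this minimality is a useful clarification rather than a gap.
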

\proofapp

\begin{note}
Generic points $\{\vect{y}_1,\ldots, \vect{y}_{r}\}$  with $r \le \binom{\nvar + d-1}{\nvar}$ satisfy conditions of Lemma~\ref{lem:qhankel_limit_projectors}.
Indeed, the condition of $\set{D}$-independence is equivalent to 
\[
\det \vmatr{\set{D}}{\vect{y}_1, \ldots, \vect{y}_r} \neq 0,
\]
which holds for generic $\{\vect{y}_1,\ldots, \vect{y}_{r}\}$.
\end{note}

By combining Lemma~\ref{lem:zero_root_perturbation} and Lemma~\ref{lem:qhankel_limit_projectors} we obtain the following theorem.
\begin{theorem}\label{thm:qhankel_rank_r}
Let $d'' := \lfloor\frac{d-1}{2}\rfloor$ and $r \le N'' := \binom{\nvar+ d''}{\nvar}$.
Furthermore, let  $\{\vect{y}_1,\ldots, \vect{y}_{r} \}\subset \bbC^{\nvar}$ 
satisfy the conditions of Lemma~\ref{lem:qhankel_limit_projectors}.
Then there exist a constant $\rho_0 = \rho_0(\vect{y}_1,\ldots, \vect{y}_{r}) > 0$ such that for any $\rho$: $0 < \rho <  \rho_0$ and points
$\vect{z}_k$ defined as  $\vect{z}_k = \rho \vect{y}_k$, the following holds true.

For any nonzero coefficients $c_1, \ldots, c_r$ and the initial array defined as \eqref{eq:exp_array}, 
the canonical completion \eqref{eq:exp_array} (in the problem \eqref{eq:qhankel_rankmin}) is also the unique solution of the  nuclear norm minimization problem \eqref{eq:nnmin}.
\end{theorem}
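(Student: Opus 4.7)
The plan is to verify the sufficient optimality conditions of Proposition~\ref{prop:opt_sufficient_complex}(ii) for the canonical completion by combining Lemma~\ref{lem:qhankel_limit_projectors} with the perturbation analysis of Lemma~\ref{lem:zero_root_perturbation}. The key insight is that the column space projector of the canonical completion converges, as $\rho \to 0$, to a ``simple'' block projector that satisfies the structural hypotheses of Lemma~\ref{lem:qhankel_zero_root_projector}.

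First I would invoke Proposition~\ref{prop:flat_ext_exp_array} to ensure that the canonical array \eqref{eq:exp_array} has rank $r$ and is the unique minimum-rank completion in Problem~\ref{prob:qh_completion}. The $\trgset{\nvar}{d''}$-independence of the $\vect{y}_k$'s, together with $r \le N''$, is essentially what the hypotheses of Lemma~\ref{lem:qhankel_limit_projectors} deliver; it passes to $\trgset{\nvar}{d''}$-independence of $\vect{z}_k = \rho \vect{y}_k$ for any $\rho \neq 0$, and since $d'' \le d'$ and $d' - 1 \le d''$ this gives both the $\set{B} = \trgset{\nvar}{d'}$-independence and the $\trgset{\nvar}{d'-1}$-independence required in the two branches of Proposition~\ref{prop:flat_ext_exp_array}.

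Next I would identify the column space projector. By Lemma~\ref{lem:rank_exp_array}, $\hmatr{\set{A}}{\arr{h}} = V D V^{\T}$ with $V = \vmatr{\set{A}}{\rho \vect{y}_1, \ldots, \rho \vect{y}_r}$ and $D = \diag{c_1, \ldots, c_r}$ invertible, so the orthogonal projector $\matr{P}(\rho)$ on the column space of $\hmatr{\set{A}}{\arr{h}}$ coincides with the projector studied in Lemma~\ref{lem:qhankel_limit_projectors}. Applying that lemma produces $\matr{P}_0 = \lim_{\rho \to 0} \matr{P}(\rho)$ of the block form \eqref{eq:proj_zero}, supported inside the leading $\binom{\nvar + d_0}{\nvar}$ rows and columns. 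The bound $r \le N''$ forces $d_0 \le d''$, so this support sits inside the first $n''$ rows and columns, and Lemma~\ref{lem:qhankel_zero_root_projector} then yields $\matr{P}_0^{\T} \matr{S}_k \matr{P}_0 = \matr{0}$ for every $k$ together with $\rrank \mathscr{A}(\matr{P}_0) = N$.

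At this point Lemma~\ref{lem:zero_root_perturbation} supplies a radius $\varepsilon > 0$ such that, whenever $\|\matr{P} - \matr{P}_0\|_F < \varepsilon$, the map $\mathscr{A}(\matr{P})$ is still full row rank and a dual certificate $\matr{M}$ with $\|\matr{M}\|_2 < 1$ solving \eqref{eq:fo_condition_matrix} exists. Continuity of $\rho \mapsto \matr{P}(\rho)$ at $\rho = 0$ then yields $\rho_0 > 0$ with $\|\matr{P}(\rho) - \matr{P}_0\|_F < \varepsilon$ for all $\rho \in (0, \rho_0)$, and Proposition~\ref{prop:opt_sufficient_complex}(ii) concludes that the canonical completion is the unique nuclear norm minimizer. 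The main delicate point is the verification that $d_0 \le d''$, so that the limiting projector $\matr{P}_0$ really fits inside the $n'' \times n''$ leading block demanded by Lemma~\ref{lem:qhankel_zero_root_projector}; this is precisely where the hypothesis $r \le N''$ is indispensable.
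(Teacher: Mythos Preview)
Your proposal is correct and follows essentially the same route as the paper's proof: identify $\matr{P}_0 = \lim_{\rho\to 0}\matr{P}(\rho)$ via Lemma~\ref{lem:qhankel_limit_projectors}, check via Lemma~\ref{lem:qhankel_zero_root_projector} that $\matr{P}_0$ meets the hypotheses of Lemma~\ref{lem:zero_root_perturbation}, then use the $\varepsilon$ from that lemma together with continuity of $\matr{P}(\rho)$ to obtain $\rho_0$. The paper compresses all of this into three sentences; you have spelled out two points it leaves implicit, namely the verification that $d_0\le d''$ (so that the limiting block fits inside the first $n''$ coordinates, enabling part~(ii) of Lemma~\ref{lem:qhankel_zero_root_projector}) and the appeal to Proposition~\ref{prop:flat_ext_exp_array} to confirm the canonical array really has rank $r$ and hence the projector $\matr{P}(\rho)$ is well defined as the rank-$r$ column space projector.
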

\begin{proof}
By Lemma~\ref{lem:qhankel_limit_projectors} and Lemma~\ref{lem:qhankel_zero_root_projector}, the matrix $\matr{P}_0 \eqdef \lim_{\rho \to 0}\matr{P}(\rho)$ satisfies the conditions of Lemma~\ref{lem:zero_root_perturbation}.
Next, take $\varepsilon$ as in Lemma~\ref{lem:zero_root_perturbation}. Then, by Lemma~\ref{lem:qhankel_limit_projectors}, there exists $\rho_0$, such that
\[
\|\matr{P}(\rho) - \matr{P}_0\|_F< \varepsilon,
\]
which completes the proof.
\end{proof}

Note in the case $\nvar = 1$, Theorem~\ref{thm:qhankel_rank_r} is a weak version of Theorem~\ref{thm:rank_r}. However, for $\nvar > 1$, in Theorem~\ref{thm:rank_r}, it is impossible to give a uniform bound on the size of exponents (elements of $\vect{z}_k$) due to fundamental issues in multivariate polynomial interpolation.

\section{Numerical results}\label{sec:numerical}

% !TEX root = mat_completion.tex
\subsection{Hankel case}
In this case we illustrate on the examples Theorem~\ref{thm:rank_r}. Theorem~\ref{thm:rank_r} only states the existence of such radius $\rho$, and a lower bound for $\rho$ may be obtained along the lines of the proof of the theorem. However, this bound may be to small, and in this section we aim at showing on numerical experiments what would be the largest lower bound.

All the numerical experiments are reproducible and available on request.
The MATLAB package CVXOPT \cite{CVX} with default settings is used for nuclear norm minimization.

The setup for the following experiments will be similar. We take a specific $\matr{h}_d$, compute the solution of \eqref{eq:qhankel_rankmin}, and measure the  Frobenius norm between the computed solution and the canonical completion \eqref{eq:canonical_completion}.

\subsubsection{The rank-one case}
First, we consider the rank-one case. We take $n=6$, and a rank-one exponential sequence $\vect{h}_{d}$, i.e. $h_k = \lambda^{k}$, where $\lambda = a + bi$, $a,b \in (-1,1)^2$, and plot the results in Fig.~\ref{fig:cexp}.
\begin{figure}[hbt!]%
\centering
\includegraphics[height=6cm]{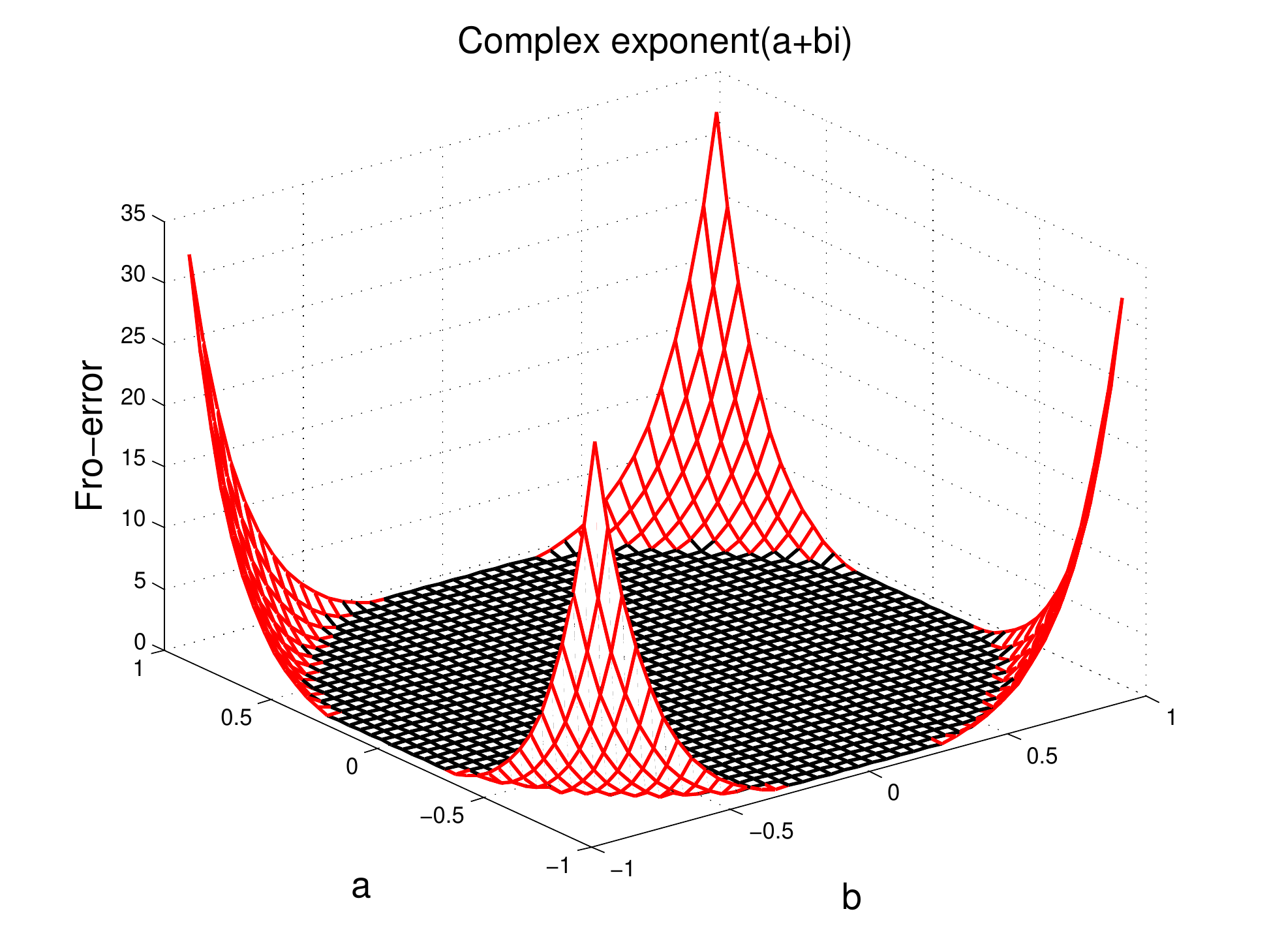}%
\caption{Nuclear norm reconstruction (Frobenius distance), $h_k = \lambda^{k}$, $\lambda = a + bi$. Black area correspond to the values less than $10^{-6}$.}
\label{fig:cexp}%
\end{figure}

In Fig.~\ref{fig:cexp}, we see that the largest lower bound for $\rho$ is very close to $1$. This coincides with the bound for real $\lambda$ given by Theorem~\ref{thm:rank_one}.

\subsubsection{The  rank-two case}
We consider real sequences $\vect{h}_d$ with $\hrank{\vect{h}_d} = 2$. In this case the following three situations are possible:
\begin{enumerate}
\item $\lambda_1, \lambda_2 \in \bbR$ (two simple real roots);
\item $\lambda_2 = \overline{\lambda_2} \not\in \bbR$  (two simple complex conjugate roots);
\item $\lambda_1 \in \bbR$, $\nu_1 = 2$ (double real root).
\end{enumerate}
The first case is considered in \cite[Fig. 1]{Dai.Pelckmans15A-nuclear}, where it is shown (numerically) that the radius $\rho$ is less than $1$. 
In this section, we examine the second and the third cases.
We generate the corresponding $\vect{h}_d$ and compute the Frobenius distance between the solutions of \eqref{eq:rankmin} and \eqref{eq:nnmin}, i.e., we compute $\|\scrS(\widetilde{\vect{p}}) - \scrS(\widehat{\vect{p}})\|_F$.

In Fig.~\ref{fig:sine}, we plot the nuclear norm reconstructions for a $6\times 6$ matrix and the last two cases.
As seen in Fig.~\ref{fig:sine}, the radius is also strictly less than $1$. 
The radius is smaller in the case of a double root and also in the case when two conjugate roots are close to each other. 
\begin{figure}[ht!]%
\centering
\includegraphics[height=6cm]{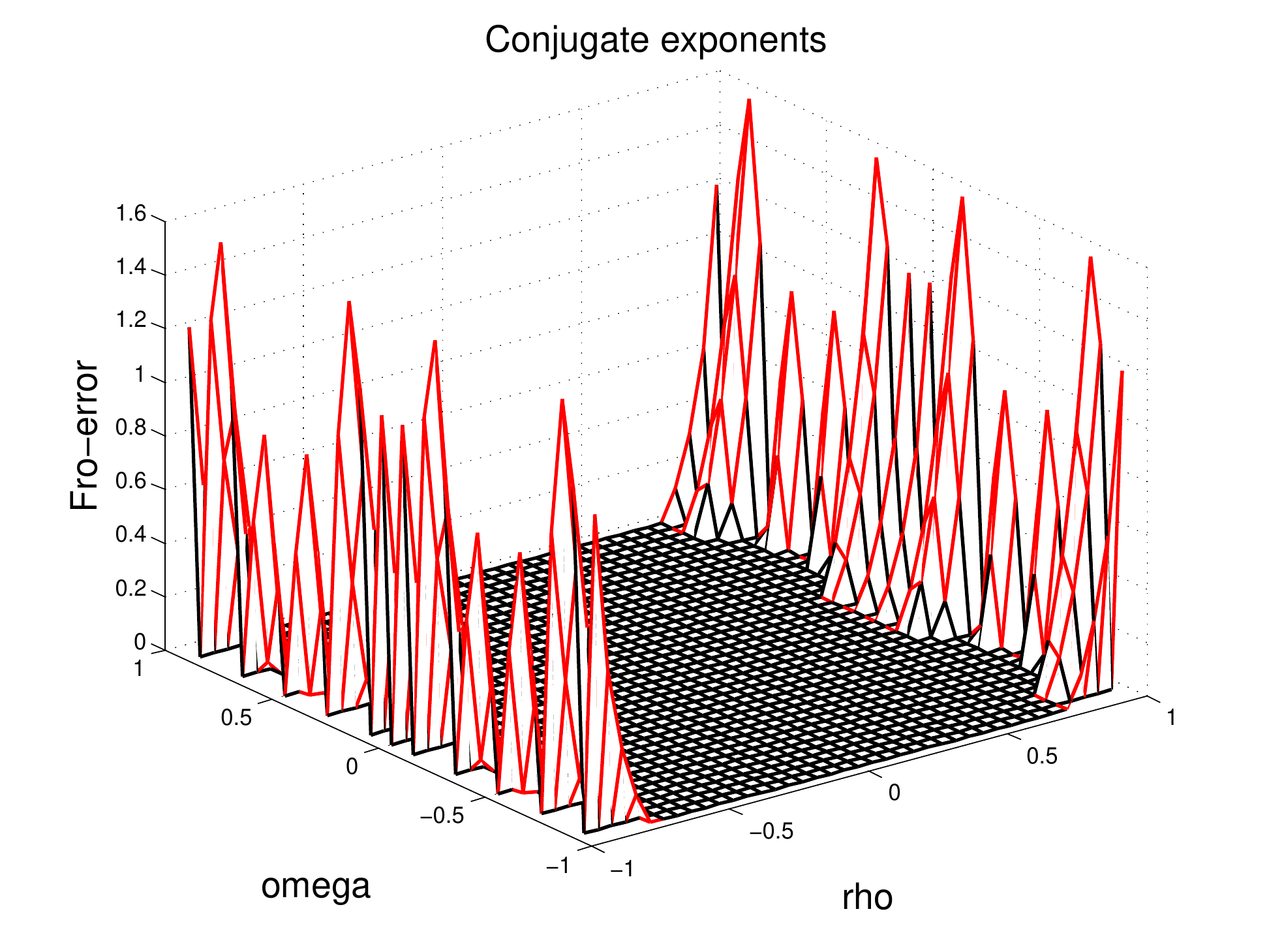}\\
\includegraphics[height=6cm]{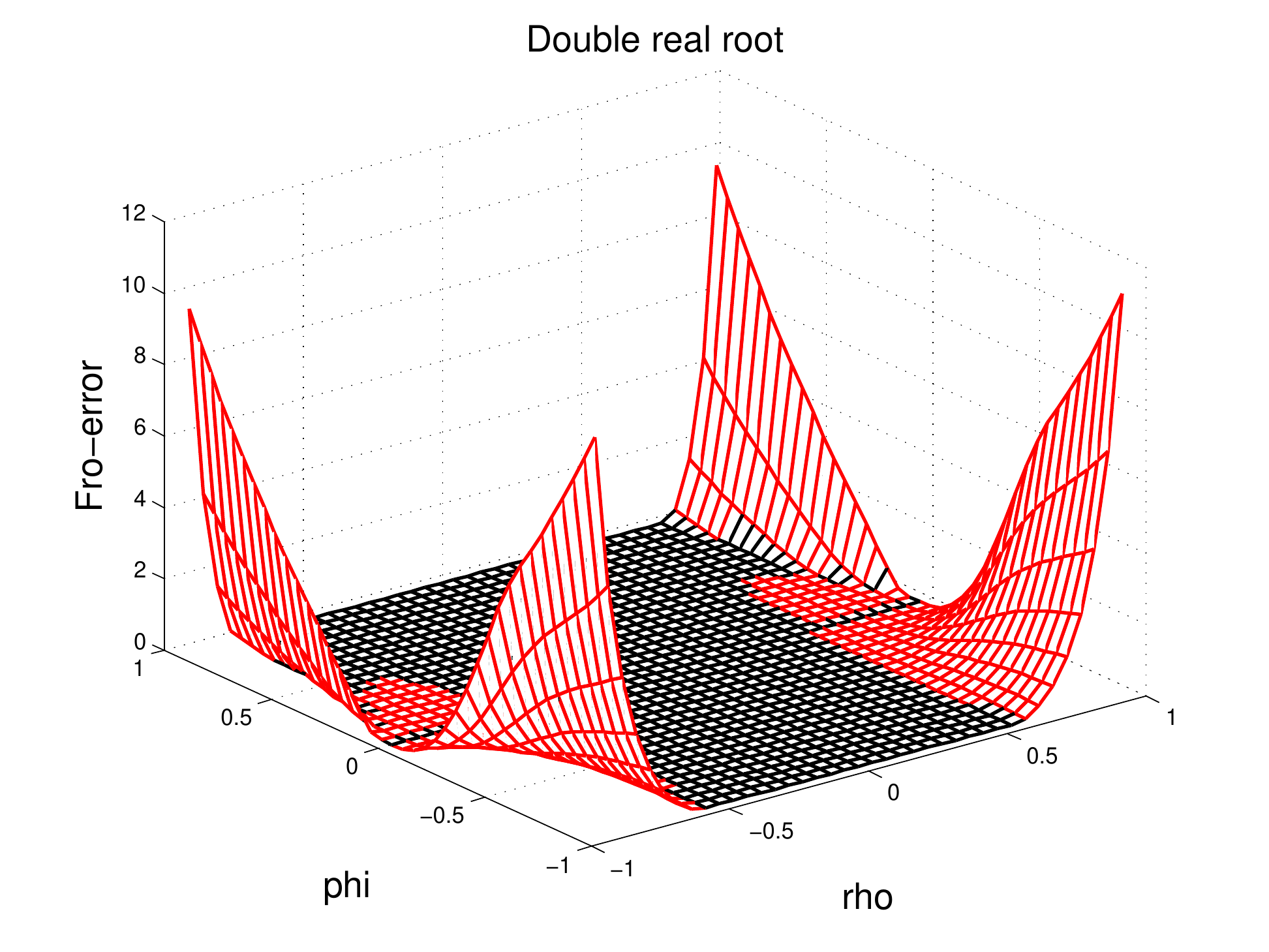}%
\caption{Nuclear norm reconstruction (Frobenius distance). Top: for two complex conjugate roots $\lambda_{1,2} = \rho \cdot\exp(\pm i\pi \omega)$ (corresponds to the damped cosine sequence $h_t = \rho^{t} \cdot \cos(\pi (\omega+1)t)$). Bottom: for a double root $\rho$ (corresponds to the damped linear function $h_t = (t \cdot \tan(0.75 \cdot \pi \cdot \varphi) +1) \cdot \rho^t$). Black areas correspond to the values less than $10^{-6}$.}%
\label{fig:sine}%
\end{figure}

\subsection{Multiple exponents}
In this experiment, we aim at estimating the radius $\rho$ based on random realizations of $\lambda_k$.
We fix $n=9$, and for each $r \in \{1,\ldots,4\}$ and for $\rho \in (0,1)$, we generate randomly  the set $\{\lambda_1,\ldots,\lambda_r\}$, such that $|\lambda_1| = \rho$ and $|\lambda_k| \le \rho$.

We consider two situations: 
\begin{itemize}
\item real roots $\lambda_k = \rho_k$ (in this case $\rho_1 = \rho$, and $\rho_k$, $k>1$ are independent and uniformly distributed in $[\rho;-\rho]$)
\item complex roots $\lambda_k = \rho_k e^{i\pi\phi_k}$, where $\rho_k$ are as in the previous example,  $\phi_k$ are independent, uniformly distributed in $[0;1]$, and independent of $\rho_k$.
\end{itemize}

We repeat the experiment $M = 100$ times, and select the maximum Frobenius error across all the realizations. The results are plotted in Fig.~\ref{fig:9roots}.

\begin{figure}[ht!]%
\centering
\includegraphics[height=6cm]{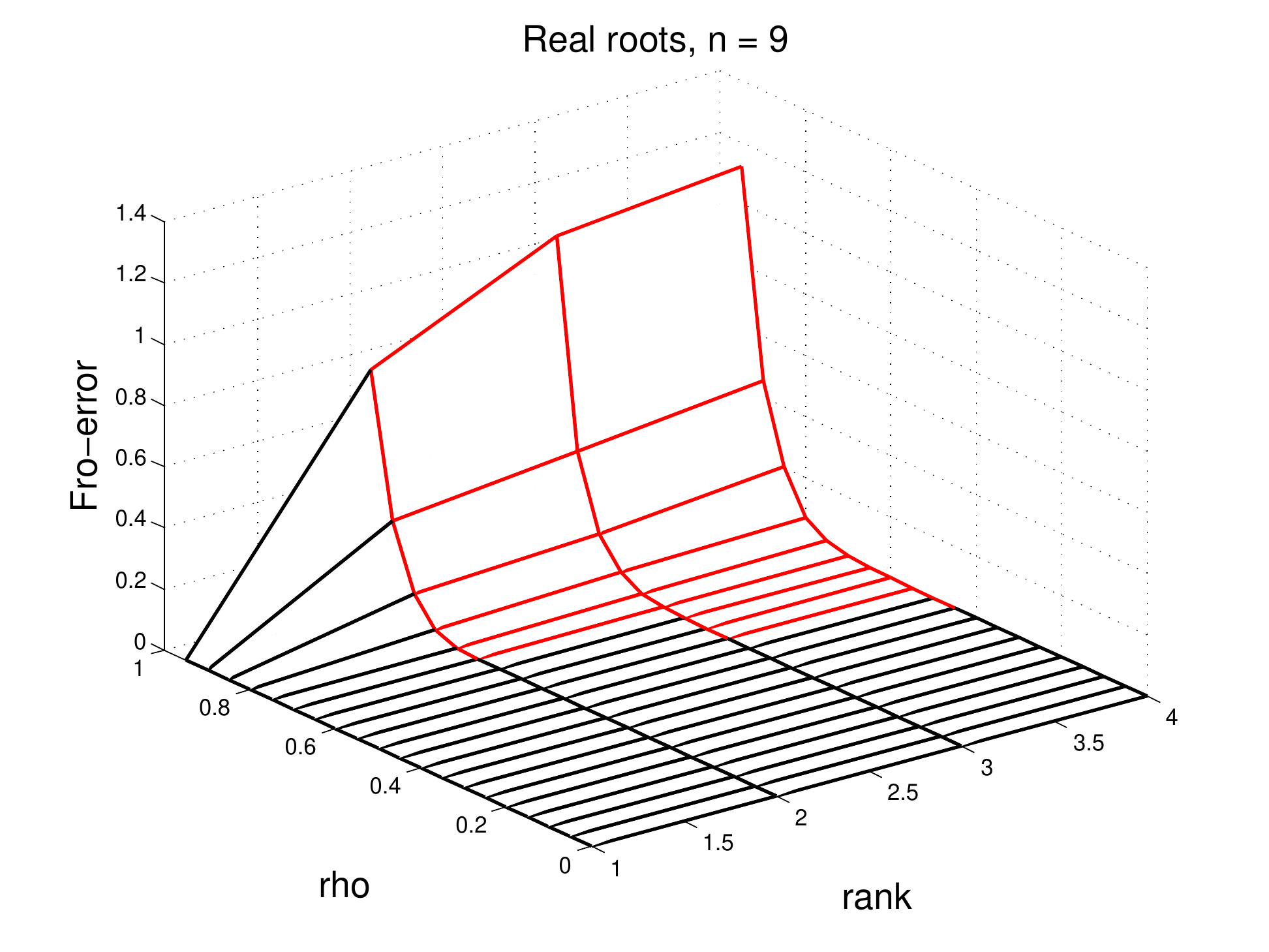}\\
\includegraphics[height=6cm]{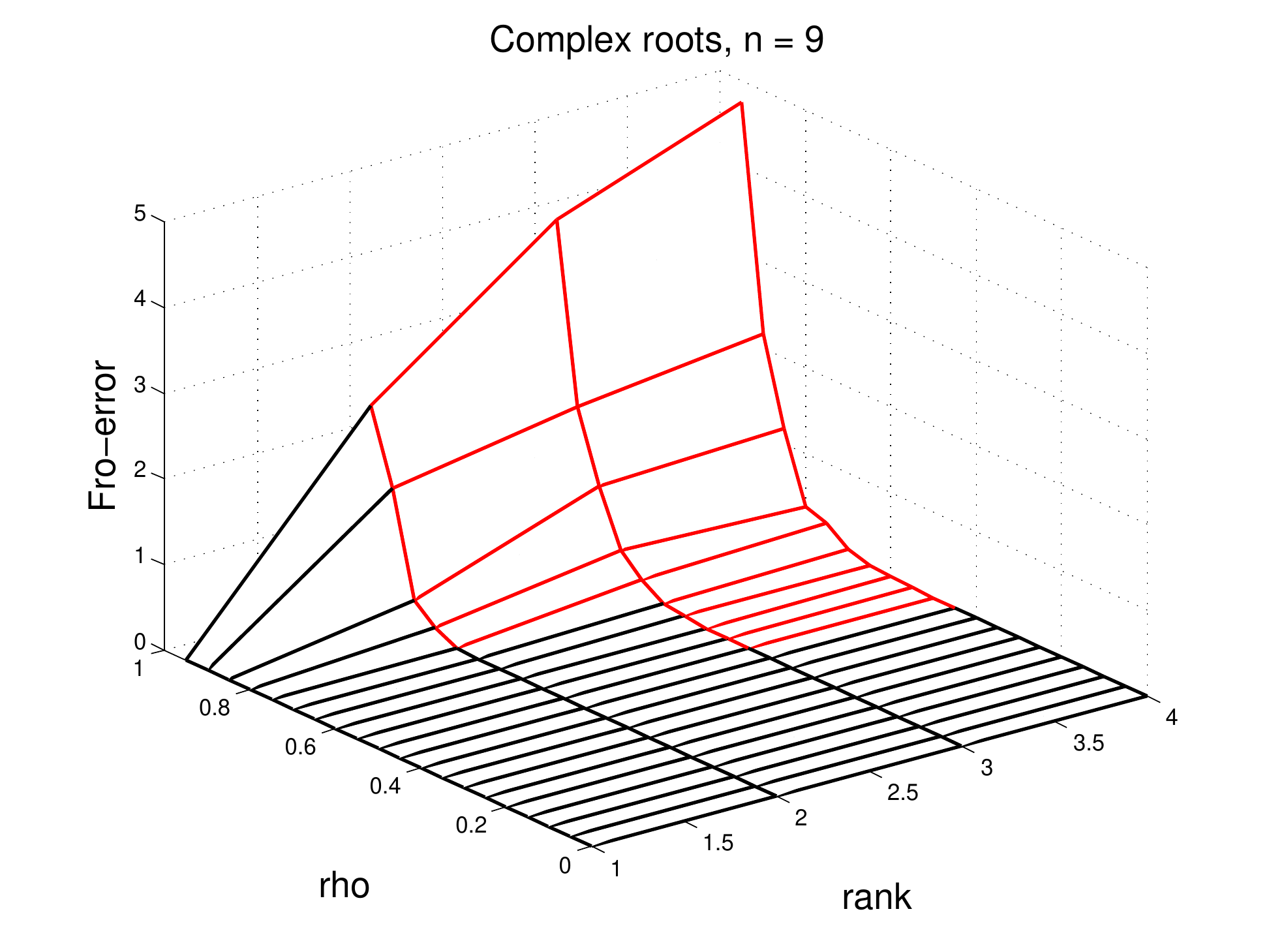}%
\caption{Nuclear norm reconstruction, $n=9$, $M=400$ (Frobenius distance). Top: real roots, bottom: complex roots.
Black area correspond to the values less than $10^{-5}$.}%
\label{fig:9roots}%
\end{figure}

The results in Fig.~\ref{fig:9roots} confirm the conclusions of Theorem~\ref{thm:rank_r}. We see that the nuclear norm heuristic works up to a certain $\rho$.
Note that the bounds are similar for the real and complex cases.

\subsection{Quasi-Hankel case}

\subsubsection{Random exponents}
In this section, we demonstrate Theorem~\ref{thm:qhankel_rank_r}. 
In the experiments, we fix $m=2$, $d=3$ and $r=3$ (thus we are dealing with the completion of the structure \eqref{eq:qhankel_trgset}).
We generate random $\vect{y}_1, \vect{y}_2,\vect{y}_3 \in \bbC^{m}$, such that 
\[
\vect{y}_k = 
\bmx
a_{k} + i b_{k}\\
e_{k} + i f_{k}\\
\emx,
\]
and $a_{k}, b_{k}, e_k, f_k$ are independent identically distributed on $[-0.5;0.5]$.

For  $\rho \in (0;1]$, we define  $\vect{z}_k$ as in Theorem~\ref{thm:qhankel_rank_r}, and consider the matrix completion with nuclear norm for the array \eqref{eq:exp_array}, where $c_k$ are chosen to be $c_1 = c_2 = c_3 = 1$.
We numerically compute the solution $\widehat{\vect{p}}$ of the nuclear norm minimization \eqref{eq:nnmin}, and 
compare it with the solution $\widetilde{\vect{p}}$ of the rank minimization \eqref{eq:rankmin} (which is known from Proposition~\ref{prop:flat_ext_exp_array}).
We repeat the experiment $M = 10$ times, and plot in Fig.~\ref{fig:qhankel_random} the dependence of $\|\widehat{\vect{p}}-\widetilde{\vect{p}}\|_2$.

\begin{figure}[ht!]%
\centering
\includegraphics[height=6cm]{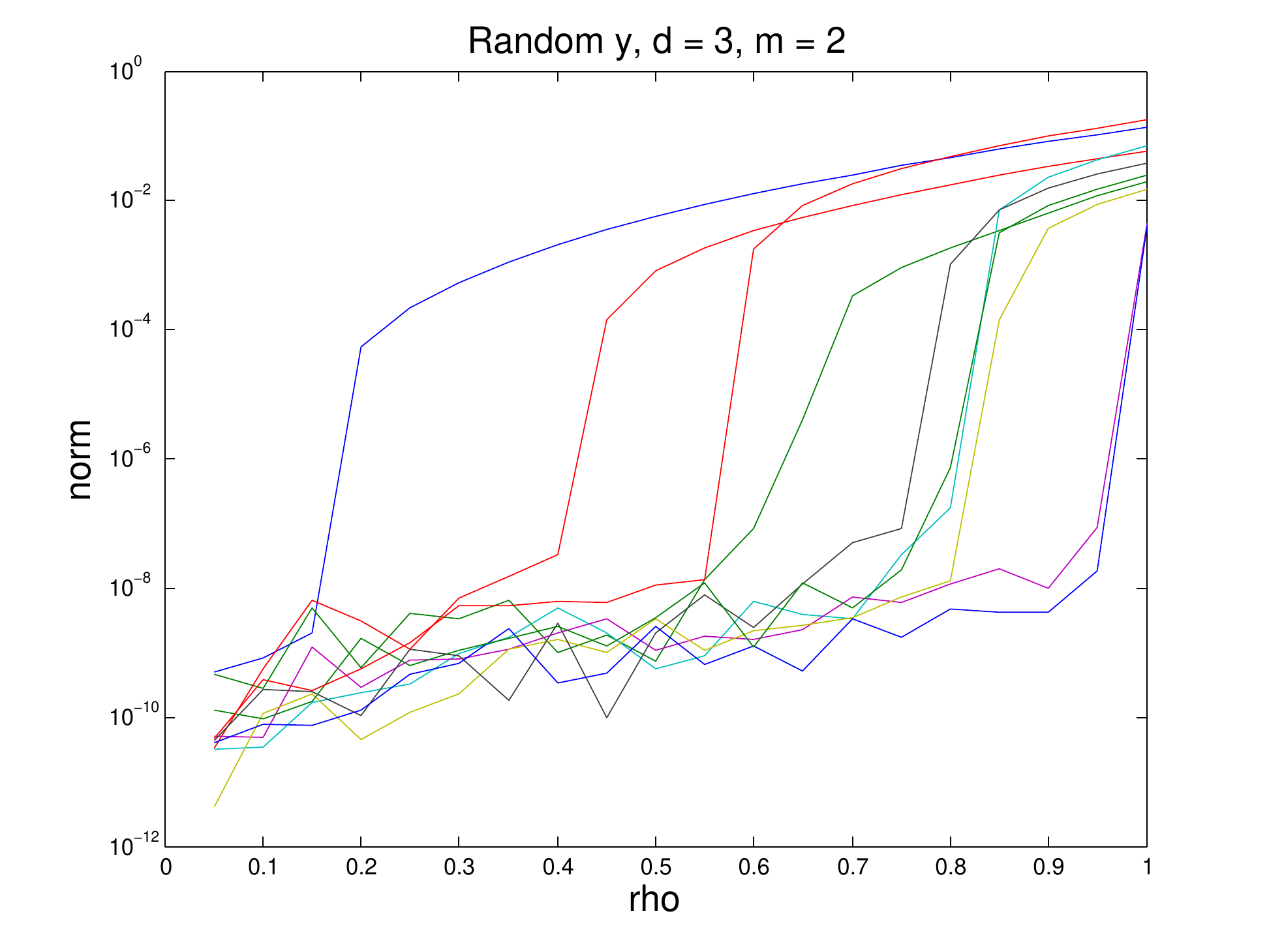}%
\caption{Nuclear norm reconstruction for quasi-Hankel matrices (parameter vector distance). Each of the curves represents a realization and shows the dependence of $\|\widehat{\vect{p}}-\widetilde{\vect{p}}\|_2$ on $\rho$.}%
\label{fig:qhankel_random}%
\end{figure}

In Fig.~\ref{fig:qhankel_random}, we see that for each realization of $\vect{y}_k$-s, there exists a radius $\rho_0$  predicted by Theorem~\ref{thm:qhankel_rank_r}.

\subsubsection{Beyond the theoretical results: nonunique decompositions}
As in the previous section, we consider the case $m=2$, $d=3$. But we choose the rank $r=4$, for which the conditions of Theorem~\ref{thm:qhankel_rank_r} no longer hold true.
For $3 \times 3 \times 3$ symmetric tensors, this is exactly the value of generic rank \cite{Iarobbino.Kanev99-Power}.

The construction of the example is based on the correspondence with symmetric tensors.
% (see Section~\ref{sec:qh_sym_tensors}). 
We take $d$ linearly independent vectors in $\bbR^{d}$, i.e.,
\[
\vect{v}_1,\ldots, \vect{v}_{d} \in  \bbR^{d}.
\]
Next, we consider the symmetric tensor obtained by symmetrization of a non-symmetric rank-one tensor:
\[
\mathcal{T} = \mathop{\mathrm{Sym}} (\vect{v}_1 \otimes \cdots \otimes \vect{v}_{d}).
\]
By \cite{Carlini.etal12JoA-solution}, the rank of the tensor $\mathcal{T}$ is equal to $2^{d-1}$, and the tensor admits an infinite family of decompositions, parameterized by
\[
\mathcal{T} = \frac{1}{2^{d-1} d!} \sum_{\varepsilon_2, \ldots, \varepsilon_{d} = 0 }^{1}
(-1)^{\varepsilon_2 + \cdots+ \varepsilon_d}
\otimes^{d} \vect{b}_{\varepsilon_2,\ldots,\varepsilon_d},
\]
where
\[
\vect{b}_{\varepsilon_2,\ldots,\varepsilon_d} \eqdef
(\gamma_1 \vect{v}_1 + \gamma_2 (-1)^{\varepsilon_2} \vect{v}_2 +  \cdots + (-1)^{\varepsilon_d} \gamma_d\vect{v}_{d} ),
\]
and $\gamma_1,\ldots,\gamma_d \in \bbR$ satisfy $\gamma_1\cdots\gamma_d = 1$.

In order to set up the matrix completion problem, for given $\gamma_1,\ldots,\gamma_d$, we first construct the  array $\{h_{\vect{\alpha}}\}_{\alpha \in \bbZp^{m}}$ (which is equivalent to the specific tensor decomposition), where $m \eqdef d-1$, as follows:
\begin{equation}\label{eq:array_nonunique}
h_{\vect{\alpha}} =
 \sum_{\varepsilon_2, \ldots, \varepsilon_{d} = 0 }^{1}
c_{\varepsilon_2, \ldots, \varepsilon_{d}} \vect{\lambda}^{\vect{\alpha}}_{\varepsilon_2, \ldots, \varepsilon_{d}},
\end{equation}
where
\[
\begin{split}
c_{\varepsilon_2, \ldots, \varepsilon_{d}}  & \eqdef \frac{1}{2^{d-1} d!} (-1)^{\varepsilon_2 + \cdots+ \varepsilon_d} (\vect{b}_{\varepsilon_2,\ldots,\varepsilon_d,1})^d, \\
\vect{\lambda}_{\varepsilon_2,\ldots,\varepsilon_d} & \eqdef
b_{\varepsilon_2,\ldots,\varepsilon_d,1}^{-1}
\bmx
b_{\varepsilon_2,\ldots,\varepsilon_d,2} \\
\vdots \\
b_{\varepsilon_2,\ldots,\varepsilon_d,d} \\
\emx,\\
\vect{b}_{\varepsilon_2,\ldots,\varepsilon_d} &=
\bmx
b_{\varepsilon_2,\ldots,\varepsilon_d,1} \\
\vdots \\
b_{\varepsilon_2,\ldots,\varepsilon_d,d} \\
\emx.
\end{split}
\]
Finally, we solve the nuclear norm minimization problem \eqref{eq:nnmin}, for the structure defined as in \eqref{eq:qhankel_trgset}. 

Assume for instance $(d,m)=(3,2)$ and  consider
\[
\vect{v}_1 = \bmx
4 
\\ 1
\\ 1
\emx, 
\vect{v}_2 = \bmx
1 
\\ 4
\\ 1
\emx, 
\vect{v}_3 = \bmx
1 
\\ 1
\\ 4
\emx.
\]
For this case, the nuclear norm minimization yields a $4$-rank matrix completion, which corresponds to the minimal rank. However, the obtained completion differs from the initial completion \eqref{eq:array_nonunique}, which indicates that the nuclear norm minimization resulted in a different decomposition from the infinite family of decompositions. 

We repeat the experiment for random $\vect{v}_k$, generated as
\[
\bmx
\vect{v}_1 & \vect{v}_2 & \vect{v}_3
\emx = A + E,
\]
where the elements of $E$ are i.i.d. uniform on $[-1;1]$, and $A$ is one of the two matrices:
\[
A = 3\bmx 1& 1 & 1\\1& 1 & 1\\1& 1 & 1\emx, \mbox{ or }
A = 3\bmx 1& 0 & 0\\0& 1 & 0\\0& 0 & 1\emx.
\]
By drawing $M=100$ random realizations, we check when the Euclidean norm of the vector of the last singular values $(\sigma_{5},\ldots,\sigma_{10})$ is less than $10^{-4}$ (we say that the nuclear norm heuristics succeeds in this case). We observed that in the first scenario, the nuclear norm heuristic always failed, but for the second scenario in $77$ cases the nuclear norm heuristic succeeded.

\section{Appendix}\label{sec:appendix}
% !TEX root = mat_completion.tex

\subsection{Proofs}\label{proofcomplex-sec}
\begin{proof}[Proof of Proposition~\ref{prop:flat_ext_exp_array}]
\begin{enumerate}
\item By Lemma~\ref{lem:rank_exp_array}, we have that 
\[
\rrank \hmatr{\calB}{\iarr{h}} = r.
\]
Since $\hmatr{\calB}{\iarr{h}}$ is a submatrix of $\hmatr{\calA}{\iarr{h}}$ in \eqref{eq:qhankel_rankmin}, the rank of the minimal completion is at least $r$.
Finally, by Lemma~\ref{lem:rank_exp_array}, for the canonical completion \eqref{eq:exp_array}
we have $\rrank \hmatr{\calA}{\iarr{h}} = r$.

\item[3.] In this case, the conditions of Proposition~\ref{prop:flat_ext} take place.

\item[2.] Consider a matrix of the form
\begin{equation}\label{eq:block_ext}
F = 
\bmx
\hmatr{\calB}{\iarr{h}} & \matr{C}\\
\matr{C}^{\T} & \matr{D}
\emx,
\end{equation}
where the matrix $\matr{C}$ is known with same image as $\hmatr{\calB}{\iarr{h}}$,  and the matrix $\matr{D}$ is unknown.
Since $\rrank{\hmatr{\calB}{\iarr{h}}} = r$,
for any $B$ we have that there exists unique $\matr{D}$ such that $\rrank (F) = r$ (for example, by \cite[Thm. 1.1]{Woerdeman89LAaiA-Minimal}).

Now, observe that for $\iarr{h}$ as in \eqref{eq:qhankel_rankmin}, 
\[
\hmatr{\calB^{+}}{\iarr{h}} = 
\bmx
\hmatr{\calB}{\iarr{h}} & \matr{C}\\
\matr{C}^{\T} & \hmatr{\delta \calB}{\iarr{h}}
\emx,
\]
where only $\hmatr{\delta \calB}{\iarr{h}}$ is unknown (i.e., it is of the form \eqref{eq:block_ext}). 
The canonical completion  \eqref{eq:exp_array} has rank $r$, and therefore it is the unique completion such that $\rrank \hmatr{\calB^{+}}{\iarr{h}} = r$.
Finally, since $\rrank \hmatr{\calB^{+}}{\iarr{h}} = \rrank \hmatr{\calB}{\iarr{h}}$, by Proposition~\ref{prop:flat_ext}, the overall completion is unique.
\end{enumerate}
\end{proof}

\begin{proof}[Proof of Proposition~\ref{prop:opt_sufficient_complex}]
For $\vect{p}_{\bbC} \eqdef \vect{p}_{\calR} + i \vect{p}_{\calI}$,
consider an SVD of the symmetric matrix ${\struct{\vect{p}_{\bbC}}}$, 
\begin{equation}\label{eq:S_svd_complex}
\struct{\vect{p}_{\bbC}} = \matr{U}_{\bbC} \matr{\Sigma} \matr{V}^{\H}_{\bbC}, 
\end{equation}
where $\matr{U}^{\H}_{\bbC} \matr{U}_{\bbC} = \matr{V}^{\H}_{\bbC} \matr{V}_{\bbC} =\matr{I}_r$, and $\matr{\Sigma} \in \bbR^{n\times n}$ is real diagonal.
Since ${\struct{\vect{p}_{\bbC}}}$ is symmetric, we have that $\struct{\vect{p}_{\bbC}} = \overline{\matr{V}}_{\bbC} \matr{\Sigma} \matr{U}^{\T}_{\bbC}$ is also an SVD.
 
 Next, we have that the matrices $\matr{B}_{\bbC} = \matr{B}_{\calR}+ i\matr{B}_{\calI}$ and $\matr{P}_{\bbC} = \matr{P}_{\calR}+ i\matr{P}_{\calI}$ have the form
\[
\begin{split}
 \matr{B}_{\calR}& =
\matr{U}_{\calR} \matr{V}^{\T}_{\calR}  + \matr{U}_{\calI} \matr{V}^{\T}_{\calI},\quad
\matr{B}_{\calI} = \matr{U}_{\calI} \matr{V}^{\T}_{\calR} - \matr{U}_{\calR} \matr{V}^{\T}_{\calI}, \\
 \matr{P}_{\calR}& =
\matr{U}_{\calR} \matr{U}^{\T}_{\calR}  + \matr{U}_{\calI} \matr{U}^{\T}_{\calI} = \matr{V}_{\calR} \matr{V}^{\T}_{\calR}  + \matr{V}_{\calI} \matr{V}^{\T}_{\calI} ,\\
\matr{P}_{\calI} & =
\matr{U}_{\calI} \matr{U}^{\T}_{\calR} - \matr{U}_{\calR} \matr{U}^{\T}_{\calI} = 
\matr{V}_{\calR} \matr{V}^{\T}_{\calI} -\matr{V}_{\calI} \matr{V}^{\T}_{\calR}. 
\end{split}
\]
Since the matrix $\matr{P}_{\bbC}$ is Hermitian and  $\matr{B}_{\bbC}$ is complex symmetric, we have that  $\matr{P}_{\calR}^{\T} =  \matr{P}_{\calR}$ and $\matr{P}_{\calI}^{\T} =  -\matr{P}_{\calI}$, and
$\matr{B}_{\calR}^{\T} =  \matr{B}_{\calR}$ and $\matr{B}_{\calI}^{\T} =  \matr{B}_{\calI}$.

Then from \cite{Day.Heroux01SJoSC-Solving}, we have that  the matrix $\scrS_{ext}(\vect{p}_{ext})$ admits the following SVD:
\begin{equation}\label{eq:Sext_svd}
\scrS_{ext}(\vect{p}_{ext}) = \matr{U}_{ext} \bmx\matr{\Sigma} & \matr{0} \\ \matr{0} & \matr{\Sigma}\emx \matr{V}_{ext}^{\T},
\end{equation}
where for $\matr{U}_{\bbC} =  \matr{U}_{\calR} +  i\matr{U}_{\calI}$ and $\matr{V}_{\bbC} =  \matr{V}_{\calR} +  i\matr{V}_{\calI}$
\[
 \matr{U}_{ext} \eqdef \bmx
 \matr{U}_{\calR} & - \matr{U}_{\calI} \\
 \matr{U}_{\calI} &  \matr{U}_{\calR}
\emx,  \matr{V}_{ext} \eqdef \bmx
 \matr{V}_{\calR} & - \matr{V}_{\calI} \\
 \matr{V}_{\calI} &  \matr{V}_{\calR}
\emx.
\]
We can express the matrices $\matr{B}_{ext}$ and $\matr{P}_{ext}$ (defined in \eqref{eq:skew_projector} and \eqref{eq:projector}) for the structure $\scrS_{ext}$.
Straightforward calculations show that
\[
\begin{split}
 \matr{B}_{ext}& \eqdef \matr{U}_{ext}\matr{V}_{ext}^{\T} = \bmx
\matr{B}_{\calR} & -\matr{B}_{\calI} \\
\matr{B}_{\calI} & \matr{B}_{\calR} \\
\emx \\
\matr{P}_{ext} &\eqdef \matr{U}_{ext}\matr{U}_{ext}^{\T} = \matr{V}_{ext}\matr{V}_{ext}^{\T}  = \bmx
\matr{P}_{\calR} & -\matr{P}_{\calI} \\
\matr{P}_{\calI} & \matr{P}_{\calR} \\
\emx.
\end{split}
\]
Next, from \eqref{eq:affine_ext0}, we have that
\begin{equation}\label{eq:affine_ext}
\begin{split}
&\scrS_{ext}(\vect{p}_{ext}) =
\bmx \matr{S}_{0, \calR} & -\matr{S}_{0, \calI} \\ \matr{S}_{0, \calI} & \matr{S}_{0, \calR} \emx + \\
& \sum\limits_{k=1}^{N} p_{k,\calR}
\bmx \matr{S}_{k} & 0 \\
0 & \matr{S}_{k}
\emx +
\sum\limits_{k=1}^{N} 
p_{k,\calI}
\bmx 0 & -\matr{S}_{k} \\
\matr{S}_{k} & 0
\emx,
\end{split}
\end{equation}
where $\matr{S}_0 = \matr{S}_{0, \calR} + i\matr{S}_{0, \calI}$. 
%In what follows, we show that the optimality conditions for the problem \eqref{eq:nnmin_ext} (from Lemmas~\ref{lem:cvx_opt_first_order} and \ref{lem:cvx_opt_uniqueness}) are equivalent to the statement of the proposition.

In what follows, we prove statements of the proposition.

\begin{enumerate}
\item
Analogously to \eqref{eq:subgradient_Sp_matrix} the subgradient of $g(\vect{p}_{ext}) \eqdef \|\scrS_{ext}(\vect{p}_{ext}) \|_*$  is equal to 
\[
%\partial g(\vect{p}_{ext}) = 
\left\{ \bmx \vect{a}(\matr{M}_{ext}) \\ \vect{b}(\matr{M}_{ext})\emx : \matr{M}_{ext}\in \bbR^{2N \times 2N}, \|\matr{M}_{ext}\|_2 \le 1\right\},
\]
where
\[
\begin{split}
a_k(\matr{M}_{ext}) =& \left\langle \bmx \matr{S}_{k} & \matr{0} \\
\matr{0} & \matr{S}_{k}
\emx, \matr{B}_{ext} + \matr{Q}_{ext} \matr{M}_{ext} \matr{Q}_{ext2} \right\rangle_{F}, \\
b_k(\matr{M}_{ext}) =&  \left\langle\bmx \matr{0} & -\matr{S}_{k} \\
\matr{S}_{k} & \matr{0}
\emx, \matr{B}_{ext} + \matr{Q}_{ext} \matr{M}_{ext} \matr{Q}_{ext2} \right\rangle_{F},
\end{split}
\]
and for $\matr{Q}_{\bbC} =\matr{Q}_{\calR} + i\matr{Q}_{\calI}  \eqdef \matr{I}_n - \matr{P}_{\bbC}$
\[
\matr{Q}_{ext} = \bmx
\matr{Q}_{\calR} & -\matr{Q}_{\calI} \\
\matr{Q}_{\calI} & \matr{Q}_{\calR} \\
\emx, \matr{Q}_{ext2} = \bmx
\matr{Q}_{\calR} & \matr{Q}_{\calI} \\
-\matr{Q}_{\calI} & \matr{Q}_{\calR} \\
\emx
\]
are the projectors on the left and right nullspace of  $\scrS_{ext}(\vect{p}_{ext})$.
Immediately, we have that
\[
\begin{split}
2\langle \matr{S}_{k}, \matr{P}_{\bbC} \rangle_F 
&= 
\left\langle \bmx \matr{S}_{k} & \matr{0} \\
\matr{0} & \matr{S}_{k}
\emx, \matr{B}_{ext} \right\rangle_{F} \\
&+ i
\left\langle
\bmx \matr{0} & -\matr{S}_{k} \\
\matr{S}_{k} & \matr{0}
\emx, \matr{B}_{ext}\right\rangle_F.
\end{split}
\]
Now let us define the matrices 
\begin{equation}\label{eq:M_ext}
\begin{split}
\matr{M}_{ext} &= 
\bmx
\matr{M}_{1} & -\matr{M}_{3} \\
\matr{M}_{2} & \matr{M}_{4}
\emx,\;
\matr{M}_{\calR} \eqdef \frac{\matr{M}_1+\matr{M}_4}{2},\\
 \matr{M}_{\calI} &\eqdef \frac{\matr{M}_2+\matr{M}_3}{2},\;
\matr{M}_{\bbC} \eqdef \matr{M}_{\calR} + i \matr{M}_{\calI}. 
\end{split}
\end{equation}
Easy calculations show that the matrix $\matr{Q}_{ext} \matr{M}_{ext} \matr{Q}_{ext2}$ can be expressed as
\begin{equation}\label{eq:M_ext_expr}
\matr{Q}_{ext} \matr{M}_{ext} \matr{Q}_{ext2} = \bmx \matr{C}_1 & -\matr{C}_3\\ \matr{C}_2 & \matr{C}_4\emx,
\end{equation}
where 
\[
\begin{smallmatrix}
\matr{C}_1  =  \matr{Q}_{\calR}  \matr{M}_{1} \matr{Q}_{\calR}   + \matr{Q}_{\calR} \matr{M}_{3} \matr{Q}_{\calI}- \matr{Q}_{\calI} \matr{M}_{2} \matr{Q}_{\calR}   + \matr{Q}_{\calI} \matr{M}_{4} \matr{Q}_{\calI},\\
\matr{C}_2  =  \matr{Q}_{\calR} \matr{M}_{2} \matr{Q}_{\calR} - \matr{Q}_{\calR} \matr{M}_{4} \matr{Q}_{\calI}+ \matr{Q}_{\calI}  \matr{M}_{1} \matr{Q}_{\calR}   + \matr{Q}_{\calI} \matr{M}_{3} \matr{Q}_{\calI}  ,\\
\matr{C}_3  =  \matr{Q}_{\calR} \matr{M}_{3} \matr{Q}_{\calR}-\matr{Q}_{\calR}  \matr{M}_{1} \matr{Q}_{\calI} + \matr{Q}_{\calI} \matr{M}_{4} \matr{Q}_{\calR} + \matr{Q}_{\calI} \matr{M}_{2} \matr{Q}_{\calI},\\
\matr{C}_4  =  \matr{Q}_{\calR} \matr{M}_{4} \matr{Q}_{\calR}+ \matr{Q}_{\calR} \matr{M}_{2} \matr{Q}_{\calI}  - \matr{Q}_{\calI} \matr{M}_{3} \matr{Q}_{\calR} +\matr{Q}_{\calI}  \matr{M}_{1} \matr{Q}_{\calI},\\
\end{smallmatrix}
\]
and 
\[
\matr{Q}_{\bbC} \matr{M}_{\bbC} \matr{Q}^{\top}_{\bbC} = \frac{\matr{C}_1+\matr{C}_4}{2}  +i\frac{\matr{C}_2+\matr{C}_3}{2}.
\]
Then, immediately,
\[
\begin{split}
&2\langle \matr{S}_k, \matr{B}_{\bbC}+\matr{Q}_{\bbC} \matr{M}_{\bbC} \matr{Q}_{\bbC}\rangle_F\\
= &
\left\langle \bmx\matr{S}_k& \matr{0} \\
\matr{0} & \matr{S}_{k}
\emx, \matr{B}_{ext} - \matr{Q}_{ext} \matr{M}_{ext} \matr{Q}_{ext} \right\rangle_{F} \\
+& i
\left\langle
\bmx \matr{0} & -\matr{S}_{k} \\
\matr{S}_{k} & \matr{0}
\emx, \matr{B}_{ext} - \matr{Q}_{ext} \matr{M}_{ext} \matr{Q}_{ext} \right\rangle_F.
\end{split}
\]
Hence, we obtain that
\[
a_k(\matr{M}_{ext})   + i b_k(\matr{M}_{ext}) =
 \left\langle  \matr{S}_{k} 
, \matr{B}_{\bbC} + \matr{Q}_{\bbC} \matr{M}_{\bbC} \matr{Q}_{\bbC}^{\top} \right\rangle_{F} 
\]
Finally, it is easy to see that for $\matr{M}_{\bbC}$ defined as in \eqref{eq:M_ext}
\begin{equation}\label{eq:M_norms}
\begin{split}
&\|\matr{M}_{\bbC}\|_2 = \left\| \bmx \matr{M}_{\calR} & -\matr{M}_{\calI} \\ \matr{M}_{\calI} & \matr{M}_{\calR}  \emx \right\|_2  \\
&= \left\| \frac{\matr{M}_{ext} + \bsm \matr{0} & -\matr{I} \\ \matr{I} & \matr{0} \esm \matr{M}_{ext} \bsm \matr{0} & -\matr{I} \\ \matr{I} & \matr{0} \esm}{2}  \right\|_2 \\&\le \|\matr{M}_{ext}\|_2,
\end{split}
\end{equation}
where the equality takes place if $\matr{M}_1 = \matr{M}_4$ and $\matr{M}_2 = \matr{M}_3$.
Hence, if  $\|\matr{M}_{ext}\|_2 < 1$ and then  $\|\matr{M}_{\bbC}\|_2 < 1$, according to \eqref{eq:M_norms}.
Vice versa, if we are given $\matr{M}_{\bbC} = \matr{M}_{\calR} + i \matr{M}_{\calI}$ with $\|\matr{M}_{\bbC}\|_2 < 1$, by taking $\matr{M}_1 = \matr{M}_4 = \matr{M}_{\calR}$ and $\matr{M}_2 = \matr{M}_3 = \matr{M}_{\calI}$, we obtain $\matr{M}_{ext}$ with $\|\matr{M}_{ext}\|_2  = \|\matr{M}_{\bbC}\|_2$, which completes the proof.

\item
Let $\matr{M}_{ext}$ be as in \eqref{eq:M_ext},  with $\matr{M}_1 = \matr{M}_4 = \matr{M}_{\calR}$, $\matr{M}_2 = \matr{M}_3 = \matr{M}_{\calI}$. Then, in view of \eqref{eq:M_ext_expr}, we have that
\[
\matr{Q}_{\bbC}  \matr{M}_{\bbC} \matr{Q}^{\T}_{\bbC} = \matr{0} \iff
\matr{Q}_{ext} \matr{M}_{ext} \matr{Q}_{ext2} = \matr{0},
\]
which completes the proof.
\end{enumerate}
\end{proof}

\begin{proof}[Proof of Lemma~\ref{lem:qhankel_zero_root_projector}]
\begin{enumeratex}
\item
Denote $\set{B} = \trgset{\nvar}{d'}$ (in this case, $s \le \# \set{B}$).
Since $2\set{B} \subseteq \set{A}$, from \eqref{eq:qhankel_Gk}, we have that for any $k \in \{1,\ldots, N\}$
\begin{equation}\label{eq:qhankel_Gk_form}
\matr{S}_k = 
\left[
\begin{array}{c|c}
\matr{0}_{s\times s} & * \\\hline
* & * \\
\end{array}
\right].
\end{equation}
Therefore, $\matr{P}^{\T}_0 \matr{S}_k \matr{P}_0 = \matr{0}$.

\noindent\item
First, denote $\set{C} = \trgset{\nvar}{d''}$ (in this case, $n'' = \# \set{C}$). Then the matrix $\matr{Q}_0$  has the form
\[
\matr{Q}_0 = \bmx  \matr{I}_{s} - \matr{P}'_0 & \matr{0} \\ \matr{0} & \matr{I}_{n-s}\emx,
\]
Now, as in Remark~\ref{rem:equivalent_conditions_slrmc_DP15}, we take an arbitrary $\matr{H}= \sum\limits_{k=1}^{N} \Delta p_k \matr{S}_k \neq \matr{0}$. It is sufficient to prove that the following block of the matrix $\matr{Q}_0\matr{H}\matr{Q}^{\T}_0$ is nonzero:
\[
\matr{F}\matr{H}\matr{F}^{\top}\neq \matr{0}, \quad \mbox{where}\quad
\matr{F} \eqdef \bmx   \matr{0} & \matr{I}_{n-n''}\emx.
\]
It is easy to see that the matrix $\matr{F}\matr{H}\matr{F}^{\top}$ has the form 
\[ 
\matr{F} \matr{H} \matr{F}^{\top} = \hmatr{\set{D}}{\iarr{f}},
\]
where the values of the array $\arr{f} \eqdef \{f_{\vect{\alpha}}\}_{\alpha \in 2\set{A}}$ are 
\[
{f}_{\vect{\alpha}} \eqdef 
\begin{cases}
0, & {\vect{\alpha}} \in \set{A},  \\
\Delta p_k, & \vect{\alpha} = \vect{\beta}_{k},
\end{cases}
\]
$\vect{\beta}_{k}$ are defined as in \eqref{eq:ordering_missing_values}, and $\set{D} \eqdef \set{A} \setminus \set{C}$. Finally, we have that
\[
2\set{D} = \trgset{\nvar}{2d} \setminus \trgset{\nvar}{2d'' + 1} \supseteq2\set{A} \setminus \set{A},
\]
and therefore the matrix $\matr{F} \matr{H} \matr{F}^{\T}$ contains all $\Delta p_k$  and  
 is nonzero for any considered $\matr{H}$.
\end{enumeratex}
\end{proof}

\begin{proof}[Proof of Theorem~\ref{thm:rank_r}]
By Lemma~\ref{lem:zero_root_perturbation}, we only need to show that for any $\varepsilon > 0$  there exists $\rho$ such that for all $\lambda_k$ satisfying the statements of the theorem it holds that 
\[
\|\matr{P} - \matr{P}_0\|_F <\varepsilon, 
\]
where 
\[
\matr{P}_0 = \bmx \matr{I}_r & \matr{0} \\ \matr{0} & \matr{0}\emx \in \bbR^{n \times n}.
\]
The distance between projectors can be expressed as
\[
\|\matr{P} - \matr{P}_0\|_F = \|(\matr{I}-\matr{P}) - (\matr{I}-\matr{P}_0)\|_F =
2 \|\matr{P}_0 \matr{U}_{\bot}\|_F,
\]
where  $\matr{U}_{\bot}$ is an orthonormal basis on the nullspace of $\matr{P}$ (the last equality follows from \eqref{eq:norm_diff_proj_int}).

The nullspace of $\matr{P}$ coincides with the image of  the matrix
\[
\matr{K} = 
\bmx
        q_0     & 0      & \ldots &0        \\
        \vdots  & q_0    & \ddots &\vdots   \\
        q_{r}   & \vdots & \ddots &0        \\
        0       & q_{r}  & \ddots &q_0      \\
        \vdots  &\ddots  & \ddots &\vdots   \\
        0       & \ldots & 0      &q_{r}   
\emx \in \bbC^{n\times (n-r)},
\]
where $q(z)$ is the characteristic polynomial \eqref{eq:char_poly} (with $q_r = 1$).
Then $\matr{U}_{\bot} \in \bbC^{n\times(n-r)}$ can be found from an SVD $\matr{K} = \matr{U}_{\bot} \matr{\Sigma} \matr{V}^{\H}_{\bot}$, and, by submultiplicativity of matrix norms, we have that 
\[
\frac{\|\matr{P} - \matr{P}_0\|^2_F}{2} \le  \|\matr{P}_0 \matr{K}\|_F^2 \|\matr{\Sigma}^{-1}\|^2_F 
\]
The entries of the matrix $\matr{P}_0 \matr{K}$ are $q_j$, for $j = \{0,\ldots, r-1\}$.
By Vieta's formulae,  these $q_j$ are homogeneous polynomials in the roots $\lambda_k$.
Therefore, there exists a universal constant $C$ such that for all $\lambda_k$, satisfying $|\lambda_{k}| < 1$ it holds that $|q_j| < C \max (|\lambda_{k}|)$.
Hence,
\[
\frac{\|\matr{P} - \matr{P}_0\|^2_F}{2} \le \frac{C(n-r) }{\min{\sigma^2_{min}(\matr{K})}}\max (|\lambda_k|).
\]
From the well-known bounds on eigenvalues of Toeplitz matrices \cite{Grenander.Szego84-Toeplitz}, we know that
\[
\min{\sigma^2_{min}(\matr{K})} \ge \min_{|z| = 1, z \in \bbC} |q(z)|^2,
\]
which can be bounded from below by a constant if the roots $\lambda_k$  satisfy $|\lambda_k| < \rho_0$
for  $0 < \rho_0 < 1$. Hence, there exists a constant $\widetilde{C}$ such that for all $|\lambda_k| < \rho_0$ the following inequality holds true
\[
{\|\matr{P} - \matr{P}_0\|^2_F} \le \widetilde{C} \max (|\lambda_k|).
\]
\end{proof}

\begin{proof}[Proof of Lemma~\ref{lem:qhankel_limit_projectors}]
Denote $M = \# \set{A}$ and assume that
$\set{D} = \{ \vect{\alpha}_{1}, \ldots, \vect{\alpha}_{r} \}$ and $\delta \set{D} = \{ \vect{\beta}_{1}, \ldots, \vect{\beta}_J \}$.
Since $\vect{y}_1,\ldots, \vect{y}_r$ are $\set{D}$-independent, there exist coefficients $[w_{i,j}]_{i,j=1}^{r,J}$  such that the polynomials
\[
p_j(\vect{y}) \eqdef \vect{y}^{\vect{\beta}_j} - \sum\limits_{i=1}^{r} w_{i,j} \vect{y}^{\vect{\alpha}_i}
\]
vanish on $\{\vect{y}_1,\ldots, \vect{y}_r\}$. 
The polynomials $p_j(\vect{y})$  constitute a so-called \emph{border basis} \cite[Ch. 2]{Stetter04-Numerical} of the interpolating ideal of $\{\vect{y}_1,\ldots, \vect{y}_r\}$ (the ideal of polynomials vanishing at  $\{\vect{y}_1,\ldots, \vect{y}_r\}$).

Now, define the following polynomials:
\[
p_{j,\rho}(\vect{y}) \eqdef  \vect{y}^{\vect{\beta}_j} - \sum\limits_{i=1}^{r} w_{i,j} \vect{y}^{\vect{\alpha}_i} \rho^{|\vect{\beta}_j| - |\vect{\alpha}_i|},
\]
which for $\rho >0$ are equal to $\rho^{|\vect{\beta}_j|} p_j(\rho^{-1}\vect{y})$ and therefore vanish  $\{\rho\vect{y}_1,\ldots, \rho\vect{y}_r\}$. Moreover, $p_{j,\rho}(\vect{y}) $ constitute a border basis of the interpolating ideal of $\{\rho\vect{y}_1,\ldots, \rho\vect{y}_r\}$.

Now let us build the left kernel $\calK_{\rho} \eqdef \lker{\scrV_{\set{A}} (\rho\vect{y}_1, \ldots, \rho \vect{y}_r)}$ of the corresponding Vandermonde matrix.
In what follows, we construct a basis of $\calK_{\rho}$.
Consider  sets of multiindices
\[
\begin{split}
\set{E}_1  & =  \{\vect{\epsilon}_{1,1}, \ldots, \vect{\epsilon}_{1,n_1}\}, \cdots
,\set{E}_J   =  \{\vect{\epsilon}_{J,1}, \ldots, \vect{\epsilon}_{J,n_J}\}, \\
\end{split}
\]
such that $n_1 + \ldots + n_J = M-r$ and 
\[
(\set{E}_1 + \{\vect{\beta}_1\}) \cup \cdots \cup (\set{E}_M + \{\vect{\beta}_M\}) = \set{A} \setminus \set{D}.
\]
Then we have that the polynomials $\{q_{j,i,\rho}\}_{j=1,i=1}^{J,n_{j}}$ defined as
\[
q_{j,i,\rho}(\vect{y}) \eqdef \vect{y}^{\vect{\epsilon}_{j,i}} p_{j, \rho} (\vect{y})
\]
are linearly independent.
Finally, the monomials of each polynomial $q_{j,i,\rho}(\vect{y})$ have degree at most $d$, and therefore,
there exist vectors $\vect{q}_{j,i,\rho} \in \bbC^{M}$ such that
\[
q_{j,i,\rho}(\vect{y}) = \bmx
\vect{y}^{\vect{\alpha}_1} & \cdots & \vect{y}^{\vect{\alpha}_M} 
\emx \vect{q}_{j,i,\rho}.
\]
Since all $q_{i,j,\rho}(\vect{y})$ vanish on $\vect{y}_1,\ldots, \vect{y}_r$, and $q_{i,j}$ are linearly independent, we have that
\[
\calK_{\rho} = \colspan \{\vect{q}_{j,i,\rho}\}_{ j=1, i=1}^{J,n_J}.
\] 
Now we  calculate the limit of $\calK_{\rho}$ as $\rho \to 1$.
Consider the cases from the statement of the lemma.

\begin{enumeratex}
\item If $r = \binom{\nvar + d_0}{\nvar}$,  we have that $\set{D} = \trgset{\nvar}{d_0}$, and $\delta \set{D} = \degset{\nvar}{d_0+1}$. Hence, $|\vect{\beta}| > |\vect{\alpha}|$ for all $\vect{\beta} \in \delta\set{D}$, $\vect{\alpha} \in \set{D}$, and we have that 
\[
\lim_{\rho \to 0} q_{j,i,\rho}(\vect{y}) =  q_{j,i,0}(\vect{y}) = \vect{y}^{\varepsilon_{j,i} + \vect{\beta}_j}.
\]
Since all sums $\varepsilon_{j,i} + \beta_j$ are distinct, we have that
\[
\lim_{\rho \to 0} \calK_{\rho} = \colspan \bmx 0 \\ I_{N-r}\emx,
\]
which implies \eqref{eq:qhankel_proj_lim_trg}.

\item Let $r = \binom{\nvar + d_0}{\nvar}$. Without loss of generality, we assume that elements of $\set{D}$ and $\delta \set{D}$ are ordered such that 
\[
\begin{split}
&\trgset{\nvar}{d_0-1} =  \{ \vect{\alpha}_{1}, \ldots, \vect{\alpha}_{K} \},\quad  
\set{C}_1  =  \{ \vect{\alpha}_{K+1}, \ldots, \vect{\alpha}_{r} \} \subset \degset{\nvar}{d_0},  \\
&\set{C}_2 =  \{ \vect{\beta}_{1}, \ldots, \vect{\beta}_{T} \} \subset \degset{\nvar}{d_0},\quad 
\set{C}_3 =  \{ \vect{\beta}_{T+1}, \ldots, \vect{\beta}_{J} \} \subset \degset{\nvar}{d_0+1}, \\
\end{split}
\]
and $T = L+K-r$, $\set{D} = \trgset{\nvar}{d_0-1} \cup \set{C}_1$, $\delta \set{D} = \set{C}_2\cup \set{C}_3$.
Note that since $\set{C}_3 = \delta \set{D} \setminus \degset{\nvar}{d_0}$, we have that $\set{C}_3 = \delta 
\set{C}_1$. Now we have that
\[
\lim_{\rho \to 0} q_{j,i,\rho}(\vect{y}) = q_{j,i,0}(\vect{y}) =
\begin{cases}
\vect{y}^{\varepsilon_{j,i}} (\vect{y}^{\vect{\beta}_j} - \sum\limits_{l=K+1}^{r} w_{l,j} \vect{y}^{\vect{\alpha}_l}), & j \le T,\\
\vect{y}^{\varepsilon_{j,i} + \vect{\beta}_j}, & j > T.
\end{cases}
\]
Finally, note that we can order $\vect{\epsilon}_{j,i}$ in such a way that $\vect{\epsilon}_{j,1} = \vect{0}$ for all $j$.
Taking into account that $\delta \set{C}_1 = \set{C}_3$, we can cancel lower monomials in $q_{j,i,0}(\vect{y})$, $i > 1$ without changing the linear space spanned by $q_{i,j,0}$.
More precisely, the set of polynomials $\{\widetilde{q}_{i,j}\}^{J,n_J}_{j=1,i=1}$, defined as
\[
\widetilde{q}_{j,i}(\vect{y}) =
\begin{cases}
q_{j,i,0}(\vect{y}), & (i = 1) \mbox{ and } (j \le T),\\
\vect{y}^{\varepsilon_{j,i} + \vect{\beta}_j}, & (i > 1) \mbox{ or } (j > T),\\
\end{cases}
\]
spans the same subspace as $\{\widetilde{q}_{j,i}\}^{J,n_J}_{j=1,i=1}$.
Therefore,  the limit of the kernel has the form
\[
\lim_{\rho \to 0} \calK_{\rho} = 
\colspan 
\bmx 
*    & \matr{0} \\ 
\matr{I}_T  & \matr{0} \\  
     & \matr{I}_{N-(K+L)}\emx,
\]
where $*$ denotes the elements of $q_{i,j,0}(\vect{y})$ for $j=1$ (\textit{i.e.}, the first $T$ columns correspond to $\widetilde{q}_{j,i}(\vect{y})$ for $i = 1$ and $j \le T$).
Hence, the projector on the orthogonal complement of $\lim_{\rho \to 0} \calK_{\rho}$ has the form \eqref{eq:qhankel_proj_lim_general}.
\end{enumeratex}
\end{proof}

% !TEX root = mat_completion.tex
\section*{Acknowledgment}
This work is supported by the ERC project ``DECODA'' no.320594, in the frame of the European program FP7/2007-2013. We thank L. Dai and K. Pelckmans for motivating discussions.

\bibliographystyle{alpha}
\bibliography{matcompl,Gretsi2015/completion}

\end{document}